\newcommand{\ds}{\displaystyle}
\newcommand{\Z}{\mathbb{Z}}
\newcommand{\Q}{\mathbb{Q}}
\newcommand{\R}{\mathbb{R}}
\newcommand{\N}{\mathbb{N}} 
\newcommand{\T}{\mathbb{T}}
\newcommand{\I}{\mathcal{I}}
\newcommand{\norm}[1]{\left\Vert#1\right\Vert}
\newcommand{\A}{{\mathcal{A}}}
\newcommand{\B}{{\mathcal{B}}}
\newcommand{\Bcal}{{\mathcal{B}}}
\newcommand{\X}{{\mathcal{X}}}
\newcommand{\U}{{\mathcal{U}}}
\newcommand{\F}{{\mathcal{F}}}
\newtheorem{theorem}{Theorem}[section]
\newtheorem{definition}[theorem]{Definition}
\newtheorem{proposition}[theorem]{Proposition}
\newtheorem{remark}[theorem]{Remark}
\numberwithin{equation}{section}
\title{Transmutation techniques and observability for time-discrete approximation schemes of conservative systems\thanks{The first author is partially supported by the Agence Nationale de la Recherche (ANR, France), Project CISIFS number NT09-437023, and grant MTM2011-29306 of the MICINN, Spain. Part of this work has been done while he was visiting the BCAM -- Basque Center for Applied Mathematics   as a Visiting Fellow.  The second author is  supported by  the ERC Advanced Grant FP7-246775 NUMERIWAVES,  the Grant PI2010-04 of the Basque Government, the ESF Research Networking Program OPTPDE and Grant MTM2011-29306 of the MICINN, Spain.}}
\author{Sylvain Ervedoza$^{1,2,}$\footnote{e-mail: {\tt ervedoza@math.univ-toulouse.fr}}\\
{\it \footnotesize $^{1}$ CNRS ; Institut de Math\'ematiques de Toulouse UMR 5219 ;}\\
{\it \footnotesize	 F-31062 Toulouse, France,}\\
{\it \footnotesize $^{2}$ Universit\'e de Toulouse ; UPS, INSA, INP, ISAE, UT1, UTM ; IMT ;}\\
{\it \footnotesize	 F-31062 Toulouse, France,}\\
and  Enrique Zuazua$^{3,4,}$
\footnote{e-mail: {\tt zuazua@bcamath.org}}\\
{\it  \footnotesize $^{3}$ BCAM - Basque Center for Applied Mathematics, Alameda de Mazarredo, 14}\\
{\it \footnotesize  E-48009 Bilbao, Basque Country, Spain.}\\
{\it  \footnotesize $^{4}$ Ikerbasque Research Professor, Ikerbasque - Basque Foundation for Science,}\\
{\it \footnotesize E-48011 Bilbao, Basque Country, Spain.}
}
\date\today
\begin{document}
\maketitle
\begin{abstract}
	In this article, we consider abstract linear conservative systems and their time-discrete counterparts. Our main result is a representation formula expressing solutions of the continuous model through the solution of the corresponding time-discrete one. As an application, we show how observability properties for the time continuous model yield uniform (with respect to the time-step) observability results for its time-discrete approximation counterparts, provided the initial data are suitably filtered. The main output of this approach is the estimate on the time under which we can guarantee uniform observability for the time-discrete models. Besides, using a reverse representation formula, we also prove that this estimate on the time of uniform observability for the time-discrete models is sharp. We then conclude with some general comments and open problems.
\end{abstract}
%
%
%

\section{Introduction}

\subsection{Setting} Assume that $\A$ is a skew-adjoint unbounded operator defined in a Hilbert space $\X$ with dense domain $\mathcal{D}(\A) \subset \X$ and compact resolvent. We consider the equation
\begin{equation}
		\label{FD-Abstract}
		y' = \A y, \quad t \in \R, \qquad y(0) = y^0 \in \X.
\end{equation}

This equation is well-posed for $t \in \R$ and, $\A$ being skew-symmetric, solutions $y(t)$ of \eqref{FD-Abstract} have constant norms $\norm{y(t)}_\X$. Often  $\norm{y(t)}_\X^2/2$ is referred to 	as the energy of the system. 

 Several classical systems fit this abstract setting, as for instance the wave equation, Schr\"odinger's and Maxwell's equations, among many others.

Our primary goal is to establish a link between the solutions $y$ of \eqref{FD-Abstract} and the solutions of time-discrete versions of \eqref{FD-Abstract}. Since the solutions $y$ of \eqref{FD-Abstract} have constant energy, we will reduce our analysis to time-discretization schemes which preserve the $\X$-norm $\norm{\cdot}_\X$ of the solutions.
\medskip

To state our results precisely, we need to introduce filtered spaces of solutions. For, we write down the spectral decomposition of the operator $\A$, which is given by a sequence of purely imaginary eigenvalues $(i \mu_{j})$, corresponding to an orthonormal basis of $\X$ constituted by the eigenvectors $\Phi_j$. For $\delta >0$, we define the filtered space
\begin{equation}
		\label{FiteredSpaceTime}
		\mathfrak{C}(\delta) = \hbox{span} \{\Phi_j  \hbox{ corresponding to eigenvalues } |\mu_j|\leq \delta \}.
\end{equation}
One easily checks that this space is left invariant by the equation \eqref{FD-Abstract}, and thus we will identify the space of trajectories $y$ solutions of \eqref{FD-Abstract} lying in $\mathfrak{C} (\delta)$ with the space of initial data $y^0$ lying in $\mathfrak{C}(\delta)$.
\medskip

Let us now describe the time-discretization schemes under consideration. 

We assume that the time-discretization of system \eqref{FD-Abstract} discretized with a time-step $\tau >0$ takes the form
\begin{equation}
		\label{AbstractTimeDisc}
		y^{k+1}_\tau = \T_\tau y^k_\tau, \quad k \in \Z, \qquad y^0_\tau = y^0.
\end{equation}
Here, $y^k_\tau$ denotes an approximation of the solution $y$ of \eqref{FD-Abstract} at time $k \tau$, and $\T_\tau$ is assumed to be an approximation of $\exp(\tau \A)$.

To be more precise, we assume that $\T_\tau$ is a linear operator which has the same eigenvectors as the operator $\A$, and such that, for some function $f$,
\begin{equation}
	\label{TonEig}
	\T_\tau \Phi_j = \exp (i \mu_{j, \tau} \tau) \Phi_j, \quad \hbox{where } \mu_{j, \tau} = \frac{1}{\tau} f (\mu_j \tau).
\end{equation}
Actually, this operator may also be written as
$$
	\T_\tau = \exp\left( i f(- i \A \tau)\right).
$$
Let us now make precise the assumptions we impose on $f$.

First, we assume that $f$ is $C^\infty$ and satisfies 
\begin{equation}
		\label{HypG-1}
	f(0) = 0, \quad f'(0) = 1.
\end{equation}
This assumption on $f$ is satisfied for most of the time-discretization approximation schemes: Roughly speaking, this is equivalent to the consistency of the time-discretization. 

To simplify notations and avoid technical developments, we furthermore assume that $f$ is odd. This is not necessary in our arguments, but in practice, most of the time-discretization schemes modeled by \eqref{AbstractTimeDisc} fit in this class. 

We also assume that $f$ is real valued and 
\begin{equation}
	\label{HypG-2}
	f : (-R, R) \to (-\pi, \pi), 
\end{equation} 
where $R \in \R_+^* \cup \{\infty\}$ may be infinite. The fact that $f$ is real-valued is equivalent to the fact that the norms of solutions of \eqref{AbstractTimeDisc} are constant in the discrete time $k \in \Z$ and ensures the stability of the numerical scheme. The range of $f$ is limited to $(-\pi, \pi)$ to avoid aliasing since one cannot measure oscillations at frequencies higher than $\pi/\tau$ on a discrete mesh of mesh-size $\tau$.

In particular, for \eqref{AbstractTimeDisc} to be well-defined, we always consider solutions of \eqref{AbstractTimeDisc} with initial data lying in some  class of filtered data at the scale $\tau$, namely $\mathfrak{C}(\delta/\tau)$ for some $\delta \in (0,R)$.

We also assume that for all $\delta < R$, 
\begin{equation}
	\label{HypG-3}
	 \inf\{f'(\alpha); \ |\alpha| \leq \delta \} >0
\end{equation}
to ensure the invertibility of $f$.

We finally define $g : (-f(R), f(R)) \to \R$ as the inverse function of $f$ on $(-R, R)$. This can be done due to assumption \eqref{HypG-3}.

Before going further, let us point out that several classical time-discretization schemes fit the abstract setting provided by Assumptions \eqref{HypG-1}--\eqref{HypG-2}--\eqref{HypG-3}, see Section \ref{ExamplesTimeDisc} for several examples.
\medskip

\subsection{A representation formula} We are now in position to state the following result, whose proof can be found in Section \ref{Sec-Rho-tau}:
\begin{theorem}\label{Thm-DiscTransmutation}
	Let $f$ be a smooth function describing the time discrete operator $\T_\tau$ as in \eqref{TonEig}, assume \eqref{HypG-1}--\eqref{HypG-2}--\eqref{HypG-3}, and fix $\delta \in (0,R)$.

	Let $\chi$ be a $C^\infty$ function compactly supported in $(-f(R), f(R))$ and equal to $1$ in $(-f(\delta), f(\delta))$. For $\tau >0$, define then $\rho_\tau(t,s)$ for $t \in \R,\ s \in \R$ as 
	\begin{equation}
		\label{KernelDiscTransmutation}
		\rho_\tau(t, s) = \frac{1}{2\pi} \int_{- \pi/\tau}^{\pi/\tau} \exp\left(-\frac{i g(\mu_\tau \tau) t}{\tau} \right) \chi\left(\mu_\tau\tau\right) e^{i \mu_\tau s}\, d\mu_\tau.
	\end{equation}
	Then, if $y^0 \in \mathfrak{C}(\delta/\tau)$ and $y^k_\tau$ is the corresponding solution of \eqref{AbstractTimeDisc},  the function $y(t)$ defined by 
	\begin{equation}
		\label{Transmutation}
		y(t) = \tau \sum_{k \in \Z} \rho_\tau(t, k \tau) y_\tau^k
	\end{equation}
	is the solution of \eqref{FD-Abstract} with initial data $y^0$.
\end{theorem}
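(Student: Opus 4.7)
The plan is to reduce the identity to a single eigenmode computation and then recognize the right-hand side of \eqref{Transmutation} as the value of a Fourier series at a precise point.

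First I would expand the initial datum in the eigenbasis. Since $\A$ has compact resolvent, only finitely many eigenvalues satisfy $|\mu_j|\le \delta/\tau$, so any $y^0 \in \mathfrak{C}(\delta/\tau)$ writes as a finite sum $y^0 = \sum_j a_j \Phi_j$. By \eqref{TonEig}, the time-discrete solution is then
$$
    y_\tau^k \;=\; \sum_j a_j\, e^{i f(\mu_j \tau) k}\, \Phi_j.
$$
Plugging this into \eqref{Transmutation} and exchanging the (finite) sum over $j$ with the (absolutely convergent) sum over $k$ reduces the statement to showing, for each $j$ with $|\mu_j\tau|\le \delta$,
$$
    \tau \sum_{k \in \Z} \rho_\tau(t,k\tau)\, e^{i f(\mu_j \tau) k} \;=\; e^{i \mu_j t}.
$$

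Next I would change variables $\omega=\mu_\tau\tau$ in \eqref{KernelDiscTransmutation} to get
$$
    \tau\, \rho_\tau(t,k\tau) \;=\; \frac{1}{2\pi} \int_{-\pi}^{\pi} F(\omega)\, e^{i k \omega}\, d\omega,
    \quad \text{where } F(\omega) := e^{-i g(\omega) t/\tau}\, \chi(\omega).
$$
Thus $\{\tau \rho_\tau(t,k\tau)\}_{k\in\Z}$ are the Fourier coefficients (of index $-k$) of the $2\pi$-periodic extension of $F$, which is smooth since $\chi$ is compactly supported in $(-f(R), f(R)) \subset (-\pi,\pi)$. The claim above thus reduces, after reindexing $k \to -k$, to the Fourier inversion identity
$$
    \sum_{k\in\Z} \widehat{F}(k)\, e^{-i f(\mu_j\tau) k} \;=\; F\bigl(-f(\mu_j\tau)\bigr),
$$
valid pointwise since $F$ is smooth and $-f(\mu_j\tau)\in(-\pi,\pi)$.

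Finally I would evaluate $F(-f(\mu_j\tau))$. Since $f$ is odd, so is its inverse $g$, giving $g(-f(\mu_j\tau)) = -\mu_j\tau$. Moreover, by \eqref{HypG-3}, $f$ is increasing, so $|f(\mu_j\tau)|\le f(\delta)$, and by the choice of $\chi$ we have $\chi(-f(\mu_j\tau))=1$. Therefore
$$
    F\bigl(-f(\mu_j\tau)\bigr) \;=\; e^{i \mu_j t},
$$
which is precisely what was needed. Summing back over $j$ yields $y(t) = \sum_j a_j e^{i\mu_j t}\Phi_j$, which is indeed the solution of \eqref{FD-Abstract} starting from $y^0$. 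The only mildly delicate point will be the justification of the interchange of sums and the interpretation of the $k$-sum: this is harmless here because the $j$-sum is finite, $\rho_\tau(t,\cdot)$ is rapidly decreasing (being the Fourier coefficient sequence of a smooth $2\pi$-periodic function, after rescaling), and the trigonometric sums collapse to the Fourier series evaluated at an interior point.
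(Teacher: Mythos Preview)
Your proof is correct and follows essentially the same route as the paper: expand in eigenmodes, recognize $\tau\rho_\tau(t,k\tau)$ as the (inverse) discrete Fourier coefficients of $\omega\mapsto e^{-ig(\omega)t/\tau}\chi(\omega)$, and apply Fourier inversion together with $g\circ f=\mathrm{id}$ and $\chi\equiv 1$ on the relevant range. The only cosmetic difference is that the paper carries out the computation in its $\mathcal{F}_\tau$ notation (Definition~\ref{DiscreteFourier}), writing $\tau\sum_k \rho_\tau(t,k\tau)e^{i\mu_{j,\tau}k\tau}=\mathcal{F}_\tau[\rho_\tau(t,\cdot)](-\mu_{j,\tau})$ and reading off the value directly from \eqref{KernelDiscTransmutation}, whereas you rescale to the standard $2\pi$-periodic setting; the underlying computation is identical.
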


\begin{remark}
Note that when $f$ is bijective from $(-R,R)$ to $(-\pi, \pi)$, the above representation formula does not require the use of the cut-off function $\chi$ and allows describing all solutions $y$ of \eqref{FD-Abstract} for all initial data in $y^0 \in \mathfrak{C}(R/\tau)$, see Remark \ref{Remark-a} for further details.
\end{remark}

We now give some informal arguments motivating the representation formula in Theorem \ref{Thm-DiscTransmutation}.

 First, we remark that under the assumptions on the operator $\A$, solutions $y$ of \eqref{FD-Abstract} admit the following spectral decomposition:
\begin{equation}
	\label{Y-expanded}
	y(t) = \sum_j a_j e^{i \mu_j t} \Phi_j, 
\end{equation}
where $(a_j)$ are the coefficients of the initial datum on the basis $\Phi_j$:
\begin{equation}
	\label{Init-Data}
	y^0 = \sum_j a_j \Phi_j.
\end{equation}
Similarly, due to \eqref{TonEig}, the solutions $y_\tau$ of the time-discrete system \eqref{AbstractTimeDisc} with initial datum $y^0$ as in \eqref{Init-Data} can be written as
\begin{equation}
	\label{Y-tau-Expanded}
	y^k_\tau = \sum_j a_j e^{i \mu_{j,\tau} k \tau} \Phi_j,
\end{equation}
where the $\mu_{j,\tau}$ are defined by \eqref{TonEig}. Obviously, we can then extrapolate a time-continuous version $z_\tau$ of $y_\tau$:
\begin{equation}
	\label{Z-cont}
	z_\tau(s) = \sum_j a_j e^{i \mu_{j,\tau} s} \Phi_j.
\end{equation}
Here, we use the notation $s$ for the continuous time-variable corresponding to the \emph{a priori} time-discrete dynamics, and $z_\tau$ for the new state variable to avoid confusion with time $t$ and state $y$ of the continuous dynamics. 

Transmutation or subordination refers to the possibility of expressing one semi-group as a function of another one. In our context, this simply consists in writing the solutions $y$ of \eqref{FD-Abstract} as a function of $z_\tau(s)$ by means of a kernel $\tilde \rho = \tilde \rho(t,s)$ under the form
\begin{equation}
	\label{GeneralTransForm}
	y(t) = \int_\R \tilde \rho_{\tau} (t,s) z_\tau(s) \, ds, \quad t \in \R.
\end{equation}
For this to be done, using the explicit expressions \eqref{Y-expanded} and \eqref{Z-cont}, the kernel $\tilde \rho(t,s)$ has to be built so that 
\begin{equation}
	\label{Formal-tilde-rho-req}
	e^{i \mu_j t} = \int_\R \tilde \rho_\tau(t,s) e^{i \mu_{j,\tau} s} \, ds, \quad t \in \R, \, \mu \in \R.
\end{equation}
Interpreting the right-hand side of \eqref{Formal-tilde-rho-req} as a Fourier transform in $s$ and taking into account that $\mu_{j,\tau} = f (\mu_j \tau)/\tau$ with $f$ invertible, naturally leads  to
\begin{equation}
	\label{Formal-tilde-rho}
	\tilde \rho_\tau(t,s) = \frac{1}{2 \pi} \int_\R \exp\left(-\frac{i g(\mu \tau) t}{\tau} \right) e^{i \mu s}\, d\mu.
\end{equation}
This formula is a simplified version of the one of $\rho_\tau$ in \eqref{KernelDiscTransmutation}, in which the above time-continuous function $z_\tau$ in \eqref{Z-cont} has to be replaced by the time-discrete function $y_\tau$ in \eqref{Y-tau-Expanded} and the filtering operator $\chi$ has been introduced.
\medskip

\subsection{Application to observability} As an application of Theorem \ref{Thm-DiscTransmutation}, we consider an observation problem for the equation \eqref{FD-Abstract} and its time-discrete counterparts for \eqref{AbstractTimeDisc}.  This problem, inspired in control theoretical issues, concerns the possibility of recovering  the full energy of solutions out of partial measurements. The question is relevant both in the continuous and in the time-discrete frame, and in the later, a natural question arising in numerical analysis is to know whether the observability property is uniform with respect to the time-step. Indeed, this problem is the dual version of the classical controllability one and the uniformity (with respect to time-step) of the observability inequality is equivalent to the convergence of numerical controls towards the continuous ones as the time-step tends to zero (see \cite{Zua05Survey,ErvZuaCime}).

We thus consider an observation operator $\Bcal$ taking value in some Hilbert space $\U$ and assumed to belong to $ \mathfrak{L}(\mathcal{D}(\A^p), \U)$ for some $p\in \N$. To be more precise, we assume that there exists $C_{p}>0$ such that 
\begin{equation}
	\label{C-p-boundedness}
		\norm{\Bcal y}_{\U}^2 \leq C_{p}^2 \left(\norm{\A^p y}_{\X}^2  + \norm{y}_{\X}^2 \right), \quad \forall y \in \mathcal{D}(\A^p).
\end{equation}
We furthermore assume that equation \eqref{FD-Abstract} is observable through $\Bcal$ at time $T_0>0$, meaning that there exists a constant $C_0$ such that, for all $y^0 \in \mathcal{D}(\A^p)$, the solution $y(t)$ of \eqref{FD-Abstract} with initial data $y^0$ satisfies
\begin{equation}
	\label{FD-Obs}
		\norm{y^0}_\X^2 \leq C_0 \int_0^{T_0} \norm{\Bcal y(t)}_\U^2 \, dt.
\end{equation}
Estimate \eqref{FD-Obs} is the so-called observability estimate for \eqref{FD-Abstract}. As it has been established in the classical works \cite{DoleckiRussell,Lions}, this property is essentially equivalent to the controllability of the adjoint system:
\begin{equation}
	\label{ControlledSystem}
	y_c' = \A y_c + \Bcal^* v, \quad t \in (0,T), \qquad y_c(0) = y_c^0, 
\end{equation}
where $v$ is a control function in $L^2(0,T; \U)$. Actually the control for \eqref{ControlledSystem} can be built by minimizing a suitable quadratic functional over the class of solutions \eqref{FD-Abstract}. We do not give details on these links in this article and we refer the interested reader to \cite{Lions,TWbook}. Also note that the observability property is also closely linked to some inverse problems, see e.g. the work \cite{AlvesSilvestreTakahashiTucsnak} for precise statements in a setting similar to ours and the references therein.

We are thus interested in deriving discrete versions of \eqref{FD-Obs} for solutions $y_\tau$ of \eqref{AbstractTimeDisc}. Namely, we are asking if, given $\delta \in (0,R)$, there exist a time $T>0$ and  constants $C>0$ and $\tau_0>0$ independent of $\tau>0$ such that for all $\tau \in (0,\tau_0)$, solutions $y_\tau$ of \eqref{AbstractTimeDisc} with initial data $y^0 \in \mathfrak{C}(\delta/\tau)$ satisfy
\begin{equation}
	\label{Fully-Obs}
		\norm{y^0}_\X^2 \leq C \tau \sum_{ k \tau \in (0,T)} \norm{\Bcal y_\tau^k}_\U^2.
\end{equation}
This is a discrete observability estimate for the time-discrete equations \eqref{AbstractTimeDisc} which is uniform with respect to the time-discretization parameter, provided the initial state is suitably filtered. As it is well-known, see e.g. the survey articles \cite{Zua05Survey,ErvZuaCime}, this is needed to derive  algorithms to compute convergent sequences of discrete controls approximating the control of the continuous dynamics.

Our second main result, whose proof is given in Section \ref{Sec-Rho-tau}, is the following one:

\begin{theorem}\label{Thm-Main}
	Assume that $\Bcal \in \mathfrak{L}(\mathcal{D}(\A^p), \U)$ for some $p\in \N$ and $\U$ an Hilbert space, and that $\Bcal$ satisfies \eqref{C-p-boundedness} with constant $C_p$.
	
	Assume that equation \eqref{FD-Abstract} is observable through $\Bcal$ at time $T_0$ with constant $C_0$, i.e. for all $y^0 \in \mathcal{D}(\A^p)$, the solution $y(t)$ of \eqref{FD-Abstract} with initial data $y^0$ satisfies \eqref{FD-Obs}.

	Let $f$ be a smooth function describing the time discrete operator $\T_\tau$ as in \eqref{TonEig}, assume \eqref{HypG-1}--\eqref{HypG-2}--\eqref{HypG-3}, and fix $\delta \in (0,R)$.
	Then, for all 
	\begin{equation}
		\label{TimeConditionTau}
		T > \frac{T_0}{\underset{|\alpha| \leq \delta}{\inf} \{f'(\alpha) \}},
	\end{equation} 
	there exist positive constants $C$ and $\tau_0>0$ depending on $f, T, \delta, p, C_p, C_0,T_0$ such that for any $\tau \in (0,\tau_0)$, solutions $y_\tau$ of \eqref{AbstractTimeDisc} lying in $\mathfrak{C}(\delta/\tau)$ satisfy \eqref{FD-Obs}.
\end{theorem}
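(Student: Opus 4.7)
The strategy is to use the representation formula \eqref{Transmutation} of Theorem~\ref{Thm-DiscTransmutation} to transplant the continuous observability \eqref{FD-Obs} to the time-discrete level. By time-translation invariance of \eqref{FD-Abstract} and the conservation of the $\X$-norm, \eqref{FD-Obs} also holds on any translated window: $\norm{y^0}_\X^2\le C_0\int_{t_1}^{t_1+T_0}\norm{\Bcal y(t)}_\U^2\,dt$ for every $t_1\ge 0$. I would choose $\delta'\in(\delta,R)$ close enough to $\delta$ that, with $\gamma:=\inf_{|\alpha|\le\delta'}f'(\alpha)$, one still has $T_0/\gamma<T$ (possible by continuity and \eqref{HypG-3}), fix $t_1\in(0,T\gamma-T_0)$, and take the cut-off $\chi$ in \eqref{KernelDiscTransmutation} supported in $(-f(\delta'),f(\delta'))$. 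Applying $\Bcal$ to \eqref{Transmutation} and substituting into the shifted continuous inequality gives, for $y^0\in\mathfrak{C}(\delta/\tau)$,
\begin{equation*}
\norm{y^0}_\X^2\le C_0\int_{t_1}^{t_1+T_0}\norm{\tau\sum_{k\in\Z}\rho_\tau(t,k\tau)\,\Bcal y_\tau^k}_\U^2\,dt,
\end{equation*}
and I would split the sum over $k$ into an \emph{inner} part ($k\tau\in(0,T)$) and an \emph{outer} part. The goal becomes to bound the right-hand side by $C\tau\sum_{k\tau\in(0,T)}\norm{\Bcal y_\tau^k}_\U^2$ plus a remainder of order $\tau^M\norm{y^0}_\X^2$ absorbable for small $\tau$.

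For the inner part I would prove the general $L^2$-continuity estimate $\int_\R\norm{\tau\sum_k\rho_\tau(t,k\tau)u_k}_\U^2\,dt\le C\,\tau\sum_k\norm{u_k}_\U^2$ valid for any $(u_k)\subset\U$. Writing $\beta=\mu_\tau\tau$ transforms the operator into
\begin{equation*}
\tau\sum_k\rho_\tau(t,k\tau)u_k=\frac{1}{2\pi\tau}\int_{-\pi}^{\pi}e^{-ig(\beta)t/\tau}\chi(\beta)\,\hat u_\tau(-\beta/\tau)\,d\beta,
\end{equation*}
where $\hat u_\tau(\mu_\tau):=\tau\sum_k u_k e^{-i\mu_\tau k\tau}$ is the semi-discrete Fourier transform. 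Changing variable to $\xi=g(\beta)/\tau$ identifies this with the inverse continuous Fourier transform of an explicit symbol on $\R$; Plancherel combined with the two-sided bound on $g'$ on $\supp\chi$ yields the claim, and applying it to $u_k=\Bcal y_\tau^k\mathbf 1_{k\tau\in(0,T)}$ handles the inner contribution.

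For the outer part the tool is non-stationary phase. With $\beta=\mu_\tau\tau$ the kernel reads $\rho_\tau(t,k\tau)=(2\pi\tau)^{-1}\int_{-\pi}^{\pi}\chi(\beta)\,e^{i(-g(\beta)t+\beta k\tau)/\tau}\,d\beta$, and its phase derivative $-g'(\beta)t+k\tau$ is uniformly bounded below in modulus on $\supp\chi$ for $t\in(t_1,t_1+T_0)$ and $k\tau\notin(0,T)$: for $k\tau\le 0$ one has $|-g'(\beta)t+k\tau|\ge t_1\inf_{\supp\chi}g'>0$, while for $k\tau\ge T$ one has $|-g'(\beta)t+k\tau|\ge T-(t_1+T_0)/\gamma>0$ by the choice of $t_1$. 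Iterating integration by parts $N$ times then gives $|\rho_\tau(t,k\tau)|\le C_N\,\tau^{N-1}(1+|k\tau|)^{-N}$ uniformly in $t\in(t_1,t_1+T_0)$. Combined with the crude bound $\norm{\Bcal y_\tau^k}_\U\le C_p\bigl((\delta/\tau)^p+1\bigr)\norm{y^0}_\X$ from \eqref{C-p-boundedness}, $y_\tau^k\in\mathfrak{C}(\delta/\tau)$ and $\norm{y_\tau^k}_\X=\norm{y^0}_\X$, the outer contribution to the right-hand side is $O(\tau^M)\norm{y^0}_\X^2$ for arbitrarily large $M$. Putting the two bounds together, $\norm{y^0}_\X^2\le C\tau\sum_{k\tau\in(0,T)}\norm{\Bcal y_\tau^k}_\U^2+O(\tau^M)\norm{y^0}_\X^2$, and the remainder is absorbed for $\tau<\tau_0$ small. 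The main obstacle is this outer estimate: non-stationarity of the phase must hold with constants uniform in $t\in(t_1,t_1+T_0)$ and in every $k$ with $k\tau$ outside $(0,T)$, which is exactly what forces the shift $t_1>0$ and the intermediate $\delta'>\delta$, and is the reason for the strict inequality in \eqref{TimeConditionTau}.
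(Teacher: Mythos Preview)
Your proposal is correct and follows essentially the same route as the paper: the representation formula of Theorem~\ref{Thm-DiscTransmutation}, an $L^2$-boundedness estimate for the transmutation operator (the paper's Proposition~\ref{Prop-TransmutOp}) for the inner part, a non-stationary phase bound on $\rho_\tau$ (the paper's Proposition~\ref{RhoDescription}) combined with \eqref{C-p-boundedness} for the outer part, and absorption of the $O(\tau^M)$ remainder. The only cosmetic difference is that you shift the continuous observation window to $(t_1,t_1+T_0)$ at the outset, whereas the paper uses $(0,T_0)$, obtains a discrete estimate on $(-\varepsilon,\,T_0/\inf f'+\varepsilon)$, and then invokes conservativity of the discrete semigroup to shift to $(0,T)$ at the end.
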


Theorem \ref{Thm-Main} was already proved in \cite{je3} under an additional admissibility property for the operator $\Bcal$ (see \cite{Lions} for a definition, or Section \ref{Sec-Admissibility} in our context), but with an estimate on the observability time  which is worse than the one we prove here in \eqref{TimeConditionTau}. Indeed, the technique in \cite{je3}, based on a resolvent characterization of observability due to \cite{BurqZworski,Mil05},  does not yield explicit estimates on the time of observability. A variant of this strategy was developed in \cite{je9} for time-discrete approximations of Schr\"odinger equations in a geometric setting in which the corresponding wave equation is observable, where it was proved that the time-discrete approximations of Schr\"odinger equations  are uniformly observable in any time $T>0$. In that case indeed, the resolvent estimates behave much better at high-frequencies, see e.g. \cite{TWbook}, and the low frequency components of the solutions can be handled following the arguments of \cite{HarauxTime}. 
These resolvent estimates were also used to derive observability estimates for space semi-discrete conservative systems in filtered classes of initial data, see \cite{je8,je7,Miller2012}, but still with non-explicit estimates on the time of observability.

The new ingredient introduced in this article that allows us to improve the results in \cite{je3} is the representation formula \eqref{Transmutation} and  careful estimates on the kernel $\rho_\tau$ in \eqref{KernelDiscTransmutation}. This will be done using classical techniques of harmonic analysis, and in particular the oscillatory phase lemma.  In particular, Proposition~\ref{RhoDescription} shows that, for all $\varepsilon>0$, $\rho_\tau$ is polynomially small with respect to $\tau>0$ to any arbitrary order  in the set
$$
	\Big\{(t,s) \in (0,T) \times \R, \hbox{ s.t. } t + \varepsilon < s \underset{|\alpha| \leq \delta + \varepsilon}\inf \{f'(\alpha)\}  \hbox{ or } 	 s \sup_{|\alpha| \leq \delta + \varepsilon} \{f'(\alpha)\} < t-\varepsilon \Big\}.
$$
To give some insights on this result, let us again consider the informal arguments given above and remark that $z_\tau(s)$ in \eqref{Z-cont} formally solves
$$
	\partial_s^{\tau} z_\tau = A z_\tau,
$$
where $\partial_s^{\tau}$ is the operator defined on the Fourier basis by
\begin{equation}
	\label{Def-ds}
	\forall \mu \in \R, \quad \partial_s^\tau \left(e^{i \mu s}\right) = i \frac{g(\mu \tau)}{\tau} e^{i \mu s}.
\end{equation}
Thus, the kernel $\tilde \rho_\tau$ in \eqref{Formal-tilde-rho} formally satisfies the transport-like equation
\begin{equation}
	\label{Transport-Pseudo}
	\partial_t \tilde \rho_\tau - \partial_s^\tau \tilde \rho_\tau = 0, \quad (t,s) \in \R \times \R, 
\end{equation}
with an initial data $\rho_\tau(t = 0, s) = \delta_{0}(s)$ where $\delta_{0}(s)$ is the Dirac function, since, according to \eqref{Formal-tilde-rho}, $\tilde \rho_\tau(t= 0, \cdot)$ is the Fourier transform of the function taking value $1$ identically.

Note that the solution of $(\partial_t - \partial_s) \rho_* = 0$ with initial data $\rho_*(t = 0) = \delta_0(s)$  is simply the Dirac delta transported at velocity one: $\rho_*(t,s) = \delta_{t} (s)$. Of course, for the kernel under consideration, $\partial_s^\tau$ is not a classical differential operator, but it is nevertheless very close to the classical derivation operator $\partial_s$ at low frequency since $g'(0) = 1$. We can thus interpret \eqref{Transport-Pseudo} as a transport equation with some added dispersion effects at frequencies of the order of $1/\tau$.

Let us also point out that the representation formula in \eqref{Transmutation} given through the kernel $\rho_\tau$ in \eqref{KernelDiscTransmutation} is not far from a Fourier integral operator - see e.g. \cite{Treves-Book-80} -  with the phase $\varphi(t,s, \mu_\tau) = \mu_\tau s- g(\mu_\tau) t$, and similar techniques can be employed. In particular the above localization result can also be seen as a counterpart of the fact that the kernel of a Fourier integral operator with phase $\varphi$ is regularizing outside of the set in which $\partial_{\mu_\tau} \varphi = 0$.

Next, in Theorem \ref{DiscTransmutationReverse}, we discuss a reverse representation formula giving solutions of the time-discrete equation \eqref{AbstractTimeDisc} in terms of solutions of the continuous equation \eqref{FD-Abstract}. This allows us to prove admissibility results for solutions of \eqref{AbstractTimeDisc} uniformly with respect to the time-discretization parameter $\tau$, see Theorem \ref{MainFully5}, and the optimality of the time estimates in \eqref{TimeConditionTau}, see Section \ref{SectionOptimality}.

Let us also point out that, similarly as in \cite{je3}, our approach can also be applied in the context of fully-discrete schemes: Indeed, to derive observability estimates for fully discrete approximations of \eqref{FD-Abstract} that are uniform in both time and space discretization parameters, our approach shows that it is sufficient to prove observability estimates for time-continuous and space semi-discrete approximation schemes that are uniform in the space discretization parameter, see Section \ref{Sec-Further-Space} for precise statements. Also note that our results can be used to recover discrete Ingham inequalities in a slightly different setting as the one in \cite{NegZua06}, see Section \ref{Sec-Further-Ingham}. We also explain how our strategy applies in the case of weak observability estimates in Section \ref{Sec-Further-Weak}. 
\medskip

\subsection{Related results}
Our approach is inspired by several previous works which establish representation formula for solutions of one equation through the solution of another equation. As we have already said, this technique is called subordination (in particular in the context of functional analysis, see e.g. \cite{Pruss93} and references therein) or transmutation. For instance, the work \cite{Kannai77}  provides estimates on the heat kernel thanks to an analysis of the corresponding wave equation and the so-called Kannai transform expressing solutions of the heat equation in terms of solutions of the wave equation, and the work \cite{Kannai92} proposes a study of singular problems based on a representation formula adding one dimension to the problem.

In the context of control theory in which we focus here, the so-called Fourier Bros Iagnoniltzer transform, linking solutions of the wave equation to a suitable elliptic operator,  provides an efficient tool to prove quantified unique continuation properties, see e.g. \cite{Robbiano91,Robbiano95,Phung09}.

Similarly, using a suitable transformation linking the wave and heat equations, Miller in \cite{Miller06a,Miller06b} derived estimates on the cost of controllability of the heat equations in small time, later improved in \cite{TucsnakTenenbaumTransAMS}. Similar estimates, related to the characterization of the reachable set for heat equations, can be found in \cite{ErvZuazuaARMA}. The main difference between the transforms in \cite{Miller06a,Miller06b} and \cite{ErvZuazuaARMA} is that, whereas the articles \cite{Miller06a,Miller06b} express solutions of the heat equations in terms of solutions of the wave equation, \cite{ErvZuazuaARMA} is based on a transform expressing solutions of the wave equation in terms of the solutions of the heat equation. Note that the robustness of the transformation of \cite{ErvZuazuaARMA} is illustrated by the fact that the weak observation properties derived in \cite{Phung09} for the waves in general geometric setting (not necessarily satisfying the geometric control conditions) can be used to recover control properties for the heat operator. Let us also point out that these transmutation techniques can also be used to derive numerical schemes to compute approximate controls for the heat equations \cite{MunchZuazua2010}.

We also emphasize that Theorem \ref{Thm-Main} states observability results for the time-discrete schemes \eqref{TonEig} observed through an observation operator $\B$ provided the continuous model \eqref{FD-Abstract} is observable through $\B$. Hence the first step is to check the observability property for the continuous model, and these observability properties have to be checked in each situation. When the continuous model \eqref{FD-Abstract} stands for a wave equation and $\B$ is a distributed observation operator or a boundary observation, the necessary and sufficient condition for observability is the so-called geometric control condition, see \cite{Bardos,BurqGerard}. For what concerns plate or Schr\"odinger's equations, the geometry still plays a role, but due to the infinite speed of propagation the situation is more intricate: for these models, observability holds when the geometric control condition is satisfied (see e.g. \cite{Miller2004} where this result is derived using transmutation techniques), but other less restrictive geometric settings still enjoy observability properties, see e.g. \cite{BurqZworski}.

\subsection{Outline} The article is organized as follows. In Section \ref{ExamplesTimeDisc}, we give several instances of classical time-discretization schemes that fit the abstract setting \eqref{TonEig} with a function $f$ satisfying Assumptions \eqref{HypG-1}--\eqref{HypG-2}--\eqref{HypG-3}. Section \ref{Sec-Rho-tau} is devoted to the proofs of Theorem \ref{Thm-DiscTransmutation} and Theorem \ref{Thm-Main}. In Section \ref{Sec-Reverse} we present a reverse representation formula and discuss its application to uniform admissibility results and use it to prove the optimality of the time estimate \eqref{TimeConditionTau}. In Section \ref{Sec-Further}, we present some further comments. In Section \ref{Sec-Open}, we end up discussing some open problems.

\section{Some admissible time-discretization schemes}\label{ExamplesTimeDisc}

In this section, we provide several classical time-discretization schemes that fit the setting \eqref{AbstractTimeDisc}--\eqref{TonEig} and satisfy assumptions \eqref{HypG-1}--\eqref{HypG-2}--\eqref{HypG-3}.

\subsection{The midpoint scheme}

	Perhaps the  simplest time-discretization of \eqref{FD-Abstract} which preserves the energy is the midpoint scheme: for $\tau >0$, the time-discrete equation is given by:
	\begin{equation}
		\label{Midpoint}
		\frac{y_\tau^{k+1} - y_\tau^k}{\tau} = \A \left(\frac{y_\tau^k + y_\tau^{k+1}}{2} \right), \quad k \in \Z, \qquad y_\tau^0 = y^0.
	\end{equation}

	Thus, if $\Phi_j$ is an eigenvector of $\A$ with eigenvalue $i \mu_j$, the solution $y_\tau$ of \eqref{Midpoint} with initial data  $y^0 = \Phi_j$ is given by 
	\begin{multline*}
		y_\tau^k = \left(\frac{1 + i \mu_j \tau /2}{1-i \mu_j \tau /2}\right)^k \Phi_j = \exp( i \mu_{j,\tau} k \tau) \Phi_j, 
		\\
		\hbox{ where $\mu_{j,\tau}$ is defined by }
		\exp( i \mu_{j,\tau} \tau ) = \frac{1 + i \mu_j \tau /2}{1-i \mu_j \tau /2},
	\end{multline*}
	yielding 
	$$
		 \mu_{j,\tau}  \tau = 2 \arctan( \mu_j \tau/2).
	$$

	Hence the midpoint scheme \eqref{Midpoint} fits the assumptions of Theorem \ref{Thm-Main} by setting $f(\alpha) = 2 \arctan(\alpha/2)$ and thus $R = \infty$.

\subsection{The fourth order Gauss Method}

	Let us present the so-called fourth order Gauss method  for discretizing \eqref{FD-Abstract}, which enters the frame of Runge-Kutta methods, see for instance \cite{Hairer-book}.

	It reads as follows:
	 \begin{equation}\label{Gauss}
		\left\{\begin{array}{ll}
 			\ds\kappa_i^k=\A\left(y_\tau^k+\tau\sum_{j=1}^2 \alpha_{ij}\kappa_j^k\right), \qquad i=1,2,
			\\
 			\begin{array}{ll}
 				\ds y_\tau^{k+1}=y_\tau^k+\frac{\tau}{2}(\kappa_1^k+\kappa_2^k), 
				\\
				\ds y_\tau^0 = y^0 \in \X \ \hbox{given},
			\end{array}  
			\qquad
			(\alpha_{ij}) =  
				\left(\begin{array}{cc}
  				\frac{1}{4}& \frac{1}{4}-\frac{\sqrt 3}{6}\\
  				\frac{1}{4}+\frac{\sqrt 3}{6}&\frac{1}{4}
  			\end{array}\right).
		  \end{array}\right.
	 \end{equation}
	An easy computation shows that, for any $k \in \Z$, 
	$$
			\frac{1}{2}\left(\kappa_1^k + \kappa_2^k \right)= \left(Id - \frac{\tau \A}{2} + \frac{\tau^2 \A^2}{12}   \right)^{-1} \A y_\tau^k, 
	$$
	which allows to rewrite \eqref{Gauss} as 
	$$
		y_\tau^{k+1} = \left(Id - \frac{\tau \A}{2} + \frac{\tau^2 \A^2}{12}   \right)^{-1} \left(Id + \frac{\tau \A}{2} + \frac{\tau^2 \A^2}{12}   \right) y_\tau^k, \quad k \in \Z.
	$$
	
	The spectral decomposition  of the semi-discrete scheme \eqref{Gauss} can easily be performed. If $y^0 = \Phi_j$, we obtain
 	$$
	    y_\tau^k = \exp\left(i \frac{f(\mu_j\tau)}{\tau} k \tau\right) y^0, \quad \hbox{ where }	f(\alpha)= 2 \arctan\left( \frac{6\alpha}{12-\alpha^2}\right).
	 $$
	Hence the discretization \eqref{Gauss} fits the assumption of Theorem \ref{Thm-Main} by setting
	$$
		f : (-2 \sqrt{3}, 2 \sqrt{3}) \to \R; \quad f(\alpha)= 2 \arctan\left( \frac{6\alpha}{12-\alpha^2}\right).
	$$
	Note that, here, the function $f$ is limited to the range where $R = 2\sqrt{3}$.

\subsection{The Newmark method for second order in time equations}

	The Newmark method is designed for second order in time equations such as, in particular, the wave equation. Namely, let $\A_0$ be a self-adjoint operator defined on an Hilbert space $\X_0$ with dense domain $\mathcal{D}(\A_0)$ and with compact resolvent, and consider the following equation:
	\begin{equation}
		\label{AbstractWaveCont}
		\varphi'' + \A_{0} \varphi = 0, \quad t \in \R, \qquad (\varphi, \varphi')(0) = (\varphi^0, \varphi^1) \in \mathcal{D}(\A_0^{1/2})\times \X_0.
	\end{equation}
	The Newmark method yields, for $\beta \in [0,1/4]$, the following time-discrete scheme:
	\begin{equation}
		\label{Anewmark}
		\left\{
			\begin{array}{ll}
 				\ds\frac{\varphi_\tau^{k+1}+\varphi_\tau^{k-1}-2\varphi_\tau^k}{\tau^2}
				+
				\A_0\left( \beta \varphi_\tau^{k+1}+(1-2\beta )\varphi_\tau^{k}+ \beta \varphi_\tau^{k-1}\right)=0,
			\smallskip
				\\
				\ds
 				\left( \frac{\varphi_\tau^0+\varphi_\tau^1}{2}, \frac{\varphi_\tau^1-\varphi_\tau^0}{\tau} \right) = (\varphi^0, \varphi^1)
  			\in \X_0^2.
 			\end{array}
		\right.
	\end{equation}
	System \eqref{Anewmark} is conservative and preserves the discrete energy:
	\begin{multline}\label{EnergyNewmark}
	E^{k+1/2} =  
		\norm{\A_0^{1/2} \left(\frac{\varphi_\tau^k + \varphi_\tau^{k+1}}{2} \right)  }_{\X_0}^2 
		+
	    \norm{\frac{\varphi_\tau^{k+1} - \varphi_\tau^k}{\tau} }_{\X_0}^2 
		\\
		+
 		(4 \beta - 1) \frac{\tau^2}{4} \norm{\A_0^{1/2} \left(\frac{\varphi_\tau^{k+1}- \varphi_\tau^k}{\tau} \right)}_{\X_0}^2.
	\end{multline}

	System \eqref{AbstractWaveCont} fits the abstract setting \eqref{FD-Abstract} by setting $\X = \mathcal{D}(\A_0^{1/2})\times \X_0$, where $\mathcal{D}(\A_0^{1/2})$ is endowed with the scalar product $\langle \A_0^{1/2}\cdot, \A_0^{1/2}\cdot \rangle_{\X_0} $ and 
	$$
		\A = \left(\begin{array}{cc} 0 & I \\ -\A_0 & 0 \end{array}\right).
	$$

	But another way to write \eqref{AbstractWaveCont} under the form \eqref{FD-Abstract} is to set $\X = \X_0^2$, 
	\begin{equation}
		\label{NewmarkFormulation}
		y_1 = \varphi' + i \A_0^{1/2} \varphi, \quad y_2 = \varphi' - i \A_0^{1/2} \varphi, 
	\qquad \A = \left(\begin{array}{cc} i \A_0^{1/2} & 0 \\ 0 & - i \A_0^{1/2} \end{array}\right).
	\end{equation}
	This formulation will be preferred to study \eqref{Anewmark} since the eigenvectors are now given as $\Phi_j^+ = (\Psi_j, 0)$, $\Phi_j^- = (0, \Psi_j)$ where $\Psi_j$ are the eigenvectors of $\A_0$, and $\A \Phi_j^{\pm} = \pm i \sqrt{\lambda_j} \Phi_j^{\pm}$, with $\A_0 \Psi_j  =  {\lambda_j} \Psi_j$. (At this step, remember that $\A_0$ is assumed to be a self-adjoint positive definite operator, so its spectrum is given by a sequence of positive real numbers going to infinity.)

	Now, setting
	$$
		\A_{0, \tau} = \left(I+ \left(\frac{4\beta-1}{4} \right) \tau^2 \A_0\right)^{-1} \A_0, 
	$$
	considering 
	\begin{equation}\label{VariableNewmark}
	    \left\{
		\begin{array}{ll}
		\ds    y_{1, \tau}^{k+1/2} = \frac{\varphi_\tau^{k+1}-\varphi_\tau^k}{\tau} + i \A_{0,\tau}^{1/2} \left( \frac{\varphi_\tau^k+\varphi_\tau^{k+1}}{2 }\right), 
		\smallskip
			\\
		\ds    y_{2, \tau}^{k+1/2} = \frac{\varphi_\tau^{k+1}-\varphi_\tau^k}{\tau} - i \A_{0,\tau}^{1/2} \left(
		\frac{\varphi_\tau^k+\varphi_\tau^{k+1}}{2}\right),
		 \end{array}\right.
	\end{equation}
	system \eqref{Anewmark} can be written as
	\begin{multline}
		\label{MidpointNewmark}
	\ds	\frac{y_\tau^{k+1/2} - y_\tau^{k-1/2}}{\tau} = \A_\tau \left(\frac{y_\tau^{k-1/2} + y_\tau^{k+1/2}}{2} \right), \quad k \in \Z,
	 		\\
			 \hbox{with } 
			\A_\tau = 
				\left(\begin{array}{cc}
					     i \A_{0,\tau}^{1/2} &  0
					\\
						0  & - i \A_{0,\tau}^{1/2}
				 \end{array}\right).
	\end{multline}
	Under this form, one easily sees that another energy for solutions of \eqref{Anewmark} is given by 
	\begin{equation}
		\tilde E^{k+1/2}  =  \frac{1}{2}\norm{y^{k+1/2}_\tau}_{\X_0^2}^2 
		= \norm{\frac{\varphi_\tau^{k+1}-\varphi_\tau^k}{\tau}}_{\X_0}^2 
		+ \norm{\A_{0, \tau}^{1/2}\left(\frac{\varphi_\tau^k+\varphi_\tau^{k+1}}{2}\right)  }_{\X_0}^2.
	\end{equation}
	As one easily checks, it turns out that the energies $\tilde E^{k+1/2}$ and $E^{k+1/2}$ (defined in \eqref{EnergyNewmark}) are equivalent when working within a filtered class $\mathfrak{C}(\delta/\tau)$  at scale $1/\tau$, thus making of no particular relevance to our purpose the fact that they do not coincide. 
	
	Now, one can easily show that if $y^{1/2} = \Phi$, where $\Phi$ is an eigenvector of $\A$ given by \eqref{NewmarkFormulation} corresponding to the eigenvalue $i \mu$, then the solution $y_\tau$ of \eqref{MidpointNewmark} is given by
	\begin{multline}
		\label{f-Newmark}
		y_\tau^{k+1/2} = \exp \left(i \frac{f(\mu\tau )}{\tau} k \tau \right) \Phi, \\
		 \hbox{ with } f(\alpha) = 2 \arctan\left(\frac{\alpha}{2} \frac{1}{ \sqrt{1+ (\beta-1/4)\alpha^2 }}\right), \quad R = \infty.
	\end{multline}
		
	Hence the Newmark approximation scheme \eqref{Anewmark} fits the assumptions of Theorem~\ref{Thm-Main}.
	
	Also note that the observation $\Bcal_1 \varphi + \Bcal_2 \varphi'$ for \eqref{AbstractWaveCont} can be discretized in two different ways: 
	\begin{itemize}
		\item 
		A natural discretization consists in taking
		\begin{equation}
			\label{Formulation1-a}
			\Bcal_1\left(\frac{\varphi_\tau^k + \varphi_\tau^{k+1}}{2}\right) + \Bcal_2 \left(\frac{\varphi_\tau^{k+1} - \varphi_\tau^k}{\tau} \right)
		\end{equation}
		for system \eqref{Anewmark}, which corresponds to 
		\begin{equation}
			\label{Formulation1}
		\hspace{-3ex}	\Bcal_\tau y_\tau^{k+1/2} := \frac{i}{2} \Bcal_1 \A_{0, \tau}^{-1/2} \left(y_{2, \tau}^{k+1/2} - y_{1, \tau}^{k+1/2}\right) + \frac{1}{2}\Bcal_2 \left(y_{1, \tau}^{k+1/2} + y_{2, \tau}^{k+1/2}\right)
		\end{equation}
		in the formulation \eqref{MidpointNewmark}.
		
		\item 
		A less natural discretization is as follows 
		\begin{equation}
				\label{Formulation2-a}
		\Bcal_1 \A_0^{-1/2} \A_{0, \tau}^{1/2}\left(\frac{\varphi_\tau^k + \varphi_\tau^{k+1}}{2}\right) + \Bcal_2 \left(\frac{\varphi_\tau^{k+1} - \varphi_\tau^k}{\tau} \right), 
	\end{equation}
		which corresponds to 
		\begin{equation}
				\label{Formulation2}
		\hspace{-3ex}		\Bcal y_\tau^{k+1/2} := \frac{i}{2} \Bcal_1 \A_{0}^{-1/2} \left(y_{2, \tau}^{k+1/2} - y_{1, \tau}^{k+1/2}\right) + \frac{1}{2}\Bcal_2 \left(y_{1, \tau}^{k+1/2} + y_{2, \tau}^{k+1/2}\right)
		\end{equation}
			in the formulation \eqref{MidpointNewmark}.		
	\end{itemize}
	
	Note that Theorem \ref{Thm-Main}, as stated, can only handle the second formulation \eqref{Formulation2-a} in which the observation operator does not depend on $\tau >0$, though it corresponds to a time-discrete observation operator of the form \eqref{Formulation2-a} for \eqref{Anewmark}, which seems less natural than \eqref{Formulation1-a}. 
	
	Whether or not system \eqref{Anewmark} is observable through \eqref{Formulation1-a} for general observation operators $\Bcal_1, \Bcal_2$ when the corresponding continuous system is observable is an open problem. 
	
	Though, if $\Bcal_2 =0$, a trick allows us to get the same result as in Theorem~\ref{Thm-Main} for an observation operator of the form
	$$
			\Bcal_1\left(\frac{\varphi_\tau^k + \varphi_\tau^{k+1}}{2}\right).
	$$
	Indeed, first apply Theorem \ref{Thm-Main} to the observation \eqref{Formulation2}. There we obtain, for $\delta >0$, $T$ as in \eqref{TimeConditionTau}, and initial data in $\mathfrak{C}(\delta/\tau)$ back in the variable $\varphi_\tau$: 
	\begin{multline*}
		\norm{\frac{\varphi_\tau^{1}-\varphi_\tau^0}{\tau}}_{\X_0}^2 
		+ \norm{\A_{0, \tau}^{1/2}\left(\frac{\varphi_\tau^0+\varphi_\tau^{1}}{2}\right)  }_{\X_0}^2 
		\\
		\leq C \tau \sum_{ k \tau \in (0,T)} \norm{\Bcal_1 \A_0^{-1/2} \A_{0, \tau}^{1/2}\left(\frac{\varphi_\tau^k + \varphi_\tau^{k+1}}{2}\right)}_\U^2.
	\end{multline*}
	Now, applying this identity to $\A_0^{1/2}\A_{0, \tau}^{-1/2} \varphi_\tau$, which is still a solution of \eqref{Anewmark} with initial data in $\mathfrak{C}(\delta/\tau)$, 
	\begin{multline*}
			\norm{\A_0^{1/2} \A_{0, \tau}^{-1/2}\left( \frac{\varphi_\tau^{1}-\varphi_\tau^0}{\tau}\right)}_{\X_0}^2 
			+ \norm{ \A_0^{1/2} \left(\frac{\varphi_\tau^0+\varphi_\tau^{1}}{2}\right)  }_{\X_0}^2 
			\\
			\leq C \tau \sum_{ k \tau \in (0,T)} \norm{\Bcal_1 \left(\frac{\varphi_\tau^k + \varphi_\tau^{k+1}}{2}\right)}_\U^2.
	\end{multline*}
	But easy spectral computations show that, in the class $\mathfrak{C}(\delta/\tau)$, there exists a constant $C>0$ depending only on $\delta$ such that
	$$
		\tilde E^{1/2} \leq C 	\left(\norm{\A_0^{1/2}\A_{0, \tau}^{-1/2}\left( \frac{\varphi_\tau^{1}-\varphi_\tau^0}{\tau}\right)}_{\X_0}^2 
			+ \norm{\A_0^{1/2} \left(\frac{\varphi_\tau^0+\varphi_\tau^{1}}{2}\right)  }_{\X_0}^2\right).
	$$

\section{A representation formula, properties of the kernel $\rho_\tau$ and applications}\label{Sec-Rho-tau}

We will first recall basic facts on the discrete Fourier transform. We will then prove Theorem \ref{Thm-DiscTransmutation} and give some estimates on the kernel $\rho_\tau$ in \eqref{KernelDiscTransmutation}, which we use in Section \ref{Sec-Proof-Main} to prove Theorem \ref{Thm-Main}.

\subsection{Discrete Fourier transforms}

Let us introduce the definition of the discrete Fourier and inverse Fourier transforms: 
\begin{definition}\label{DiscreteFourier}
	Given any function $u_\tau$ defined on $\tau \Z$, we define its discrete Fourier transform $\F_\tau[u_\tau]$ at scale $\tau$ as:
	 \begin{equation}
	        \label{FourierDis}
	        \F_\tau [ u_\tau](\mu_\tau) = \tau \sum_{k \in \mathbb{Z}} u_\tau(k \tau) \exp(- i \mu_\tau k \tau), \quad \mu_\tau \in (-\pi/\tau,\pi/\tau).
	  \end{equation}
	For any function $v \in L^2(-\pi/\tau, \pi/\tau)$, we define the inverse Fourier transform $\F_\tau^{-1}[v]$ at scale $\tau>0$ as:
	\begin{equation}
	        \label{FourierDisInv}
        		\F_\tau^{-1}[v](k \tau) = \frac{1}{2\pi} \int_{-\pi/\tau}^{\pi/\tau} v(\mu_\tau) \exp(i \mu_\tau k \tau) \ d\mu_\tau, \quad k \in \mathbb{Z}.
 	\end{equation}
\end{definition}

According to Definition \ref{DiscreteFourier}, one easily checks that these transforms are inverse one from another, so that,
\begin{equation}\label{FourierIso}
		\left\{ \begin{array}{l}
			\ds 
  			\F_\tau^{-1}[\F_\tau[u_\tau]] (k \tau ) = u_\tau (k \tau), \quad k \in \Z,
			\smallskip
			\\
			\ds
			\F_\tau[\F_\tau^{-1}[v]] (\mu_\tau) = v(\mu_\tau),\quad  \mu_\tau \in (-\pi/\tau,\pi/\tau), 
		\end{array}\right.
\end{equation}

Similarly as for the continuous Fourier transform, we also have the following Parseval identity:
\begin{equation}\label{ParsevalDis}
        	\frac{1}{2\pi} \int_{-\pi/\tau}^{\pi/\tau} |\F_\tau [u_\tau](\mu_\tau)|^2 \ d \mu_\tau = \tau \sum_{k\in\mathbb{Z}} |u_\tau(k\tau)|^2.
\end{equation}
These properties will be used in the sequel.

In the following, for a Hilbert space $H$, the space $L^2(\tau \Z; H)$ is the set of discrete functions $u_\tau$ defined on $\tau \Z$ with values in $H$ endowed with the norm
$$
	\norm{ u_\tau}_{L^2(\tau \Z; H)}^2 = \tau \sum_{k \in \Z} \norm{u_\tau(k \tau)}_H^2.
$$

\subsection{Proof of Theorem \ref{Thm-DiscTransmutation}}
 
\begin{proof}[Proof of Theorem \ref{Thm-DiscTransmutation}]
	Expand $y^0$ as 
	$$
		y^0 = \sum_{j, \ |\mu_j| \leq \delta/\tau} a_j \Phi_j.
	$$
	Then, for all $k \in \Z$, according to \eqref{TonEig}, 
	$$
		y_\tau^k = \sum_{j, \ |\mu_j| \leq \delta/\tau} a_j \Phi_j \exp (i \mu_{j,\tau} k\tau), 
	$$
	and thus $y(t)$ defined by \eqref{Transmutation} can be written as
	\begin{eqnarray}
		y(t) &=& \tau \sum_{k \in \Z} \rho_\tau(t, k \tau) \sum_{j, \ |\mu_j| \leq \delta/\tau} a_j \Phi_j \exp (i \mu_{j,\tau} k\tau)
		\nonumber
		\\
		& = & \sum_{j, \ |\mu_j| \leq \delta /\tau} a_j \Phi_j \tau \sum_{k \in \Z} \rho_\tau(t, k \tau) \exp(i \mu_{j, \tau} k \tau). 
		\nonumber
		\\
		& =& \sum_{j, \ |\mu_j| \leq \delta /\tau} a_j \Phi_j \F_\tau[ \rho_\tau(t, \cdot) ](- \mu_{j,\tau}).
		\label{XbyTransmutation}
	\end{eqnarray}
	
	According to \eqref{KernelDiscTransmutation}, for all $\mu_{\tau} \in (- \pi/\tau, \pi/\tau)$, 
	$$
		\F_\tau [\rho_\tau(t, \cdot)] (\mu_\tau) = \exp\left(-\frac{i g(\mu_\tau \tau) t}{\tau} \right) \chi\left(\mu_\tau \tau \right).		
	$$
	Due to the definition of $g$, for all $j$ such that $|\mu_j| \leq \delta/\tau$, $g(\mu_{j, \tau} \tau) = \mu_j\tau $. It then follows from \eqref{XbyTransmutation} that
	$$
		y(t) = \sum_{j, \ |\mu_j| \leq \delta /\tau} a_j \Phi_j \exp( i \mu_j t) \chi(\mu_{j,\tau} \tau).
	$$
	But the choice of $\chi$ implies that 
	$$
		y(t) = \sum_{j, \ |\mu_j| \leq \delta /\tau} a_j \Phi_j \exp( i \mu_j t).
	$$
	Hence $y(t)$ solves \eqref{FD-Abstract} with initial data $y^0$.
\end{proof}

\begin{remark}\label{Remark-a}
	When the parameter $R \in \R_+^* \cup \{\infty\} $ in \eqref{HypG-2} is such that $f(R) = \pi$ if $R$ is finite or $\underset{\infty}\lim f = \pi$, i.e. when $f$ is bijective from $(-R, R)$ to $(-\pi, \pi)$, then there is no need of introducing a cut-off function to get Theorem \ref{Thm-DiscTransmutation}. To be more precise, we have the following result:
	
	\begin{proposition}\label{Prop-DiscTransmutation-SansChi}
	Under the assumptions of Theorem \ref{Thm-DiscTransmutation}, if we further assume that $f$ is bijective from $(-R, R)$ to $(-\pi, \pi)$, then we can take the function $\chi$ in Theorem \ref{Thm-DiscTransmutation} to be identically one. In other words, if for $\tau >0$ we define $\rho_{\tau,0}(t,s)$ by
	\begin{equation}
		\label{KernelDiscTransmutation-SansChi}
		\rho_{\tau,0}(t, s) = \frac{1}{2\pi} \int_{- \pi/\tau}^{\pi/\tau} \exp\left(-\frac{i g(\mu_\tau \tau) t}{\tau} \right) e^{i \mu_\tau s}\, d\mu_\tau, \qquad (t,s) \in \R^2,
	\end{equation}
	we have the following result: for all $y^0 \in \mathfrak{C}(R/\tau)$ and $y^k_\tau$ is the corresponding solution of \eqref{AbstractTimeDisc},  the function $y(t)$ defined by \eqref{Transmutation} with $\rho_{\tau,0}$ instead of $\rho_\tau$ is the solution of \eqref{FD-Abstract} with initial data $y^0$.
	\end{proposition}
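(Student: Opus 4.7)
The plan is to mimic the proof of Theorem \ref{Thm-DiscTransmutation} almost verbatim, with the cut-off $\chi$ simply removed; the only real content is to check that the bijectivity hypothesis makes the cut-off superfluous.

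First I would expand $y^0 \in \mathfrak{C}(R/\tau)$ in the eigenbasis as $y^0 = \sum_{|\mu_j| \leq R/\tau} a_j \Phi_j$, so that by \eqref{TonEig},
$$
y_\tau^k = \sum_{|\mu_j| \leq R/\tau} a_j \Phi_j \exp(i \mu_{j,\tau} k \tau), \qquad \mu_{j,\tau} = f(\mu_j \tau)/\tau.
$$
Plugging this into the proposed transmutation formula and interchanging the $k$-sum with the (finite for fixed $\tau$, since $\mathfrak{C}(R/\tau)$ is finite dimensional by the compact resolvent assumption) $j$-sum yields
$$
y(t) = \sum_{|\mu_j| \leq R/\tau} a_j \Phi_j\, \F_\tau[\rho_{\tau,0}(t,\cdot)](-\mu_{j,\tau}),
$$
exactly as in the proof of Theorem~\ref{Thm-DiscTransmutation}.

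The key step is then to evaluate $\F_\tau[\rho_{\tau,0}(t,\cdot)]$. By construction \eqref{KernelDiscTransmutation-SansChi}, $\rho_{\tau,0}(t,\cdot)$ is nothing but the inverse discrete Fourier transform at scale $\tau$ of the function $\mu_\tau \mapsto \exp(-i g(\mu_\tau \tau) t/\tau)$, which is well defined on $(-\pi/\tau, \pi/\tau)$ precisely because $g$ is defined on $(-\pi,\pi)$ (this is exactly where bijectivity of $f : (-R,R) \to (-\pi,\pi)$ enters). By the inversion identity \eqref{FourierIso},
$$
\F_\tau[\rho_{\tau,0}(t,\cdot)](\mu_\tau) = \exp\!\left(-\frac{i g(\mu_\tau \tau) t}{\tau}\right), \qquad \mu_\tau \in (-\pi/\tau, \pi/\tau).
$$
For any $j$ with $|\mu_j| \leq R/\tau$, the bijectivity assumption gives $|\mu_{j,\tau} \tau| = |f(\mu_j \tau)| < \pi$, so $-\mu_{j,\tau}$ does lie in the interval $(-\pi/\tau, \pi/\tau)$ on which the above formula applies. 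Moreover $g(\mu_{j,\tau}\tau) = g(f(\mu_j \tau)) = \mu_j \tau$. Substituting,
$$
y(t) = \sum_{|\mu_j| \leq R/\tau} a_j \Phi_j \exp(i \mu_j t),
$$
which is indeed the solution of \eqref{FD-Abstract} with initial datum $y^0$.

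There is no real obstacle here: the only subtle point is verifying that for each $j$ with $|\mu_j| \leq R/\tau$, the frequency $-\mu_{j,\tau}$ falls strictly inside $(-\pi/\tau, \pi/\tau)$, which is precisely what the bijectivity assumption $f(\pm R^{\mp}) = \pm \pi$ guarantees (and which would fail for the Gauss example, where $f((-R,R))$ is only a strict subinterval of $(-\pi,\pi)$, making the cut-off $\chi$ genuinely necessary in Theorem~\ref{Thm-DiscTransmutation}). Everything else is a direct transcription of the argument already given.
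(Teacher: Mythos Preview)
Your proof is correct and follows exactly the approach the paper intends: the paper's own justification is the single observation that under the bijectivity hypothesis $g$ is defined on the whole interval $(-\pi,\pi)$, after which the proof of Theorem~\ref{Thm-DiscTransmutation} goes through with $\chi\equiv 1$. You have simply written this out in full, including the verification that $\mu_{j,\tau}$ lands in $(-\pi/\tau,\pi/\tau)$, which is precisely the point.
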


	Indeed, under the additional assumption that $f$ is bijective from $(-R, R)$ to $(-\pi, \pi)$, the function $g$ is defined on the whole interval $(-\pi, \pi)$. 
	
	Remark that all the numerical schemes presented in Section \ref{ExamplesTimeDisc} fit the assumptions of Proposition \ref{Prop-DiscTransmutation-SansChi} and thus Proposition \ref{Prop-DiscTransmutation-SansChi} applies for a wide range of numerical schemes.
	
	But even under this additional assumption, the localization properties of the kernel function $\rho_{\tau,0}$ may be very rough and are not suitable to derive good estimates on the time of uniform observability for solutions $y_\tau$ of \eqref{AbstractTimeDisc} as in Theorem \ref{Thm-Main}.
\end{remark}

\subsection{Localization of $\rho_\tau$}

We now analyze the function $\rho_\tau$ in \eqref{KernelDiscTransmutation}:

\begin{proposition}\label{RhoDescription}
	Let $f$ be a smooth function describing the time discrete operator $\T_\tau$ as in \eqref{TonEig}, assume \eqref{HypG-1}--\eqref{HypG-2}--\eqref{HypG-3}, and fix $\delta \in (0,R)$.

	Let $\varepsilon >0$ such that $\delta + \varepsilon < R$, and choose the function $\chi$ in Theorem \ref{Thm-DiscTransmutation} supported in $(- f(\delta + \varepsilon), f( \delta + \varepsilon))$ and real-valued.

	Let $T>0$ and $(t,s) \in (0,T) \times \R$ be such that 
	\begin{equation}
		\label{Cond(t,s)}
		t +\varepsilon  < s \inf_{|\alpha| \leq \delta + \varepsilon} \{f'(\alpha)\} \quad \hbox{or} \quad 	 s \sup_{|\alpha| \leq \delta + \varepsilon} \{f'(\alpha)\} < t- \varepsilon .
	\end{equation}
	Then for all $n \in \N$, there exists a constant $C_{n,\varepsilon}$ independent of $(t,s) \in (0,T) \times \R$ such that for all $(t,s)$ satisfying \eqref{Cond(t,s)}, 
	\begin{equation}
			\label{RhoSmall}
			|\rho_\tau(t,s) | \leq  \frac{C_{n,\varepsilon} \tau^{2n-1}}{\underset{|\alpha| \leq \delta + \varepsilon}\inf \{ |f'(\alpha) s- t | \}^{2n} },
	\end{equation}
	where $\rho_\tau$ is the kernel function given by \eqref{KernelDiscTransmutation}.
\end{proposition}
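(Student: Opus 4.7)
The plan is to view $\rho_\tau(t,s)$ as an oscillatory integral in $\alpha$ after a change of variables, and apply the non-stationary phase lemma. First I would rewrite $\rho_\tau$ by substituting $\alpha = g(\mu_\tau\tau)$ (equivalently $\mu_\tau\tau = f(\alpha)$, $d\mu_\tau = f'(\alpha)\,d\alpha/\tau$); since $\chi$ is supported in $(-f(\delta+\varepsilon), f(\delta+\varepsilon))$, the integral in \eqref{KernelDiscTransmutation} becomes
\begin{equation*}
\rho_\tau(t,s) \;=\; \frac{1}{2\pi\tau}\int_{-\delta-\varepsilon}^{\delta+\varepsilon} a(\alpha)\, e^{i\Phi(\alpha)/\tau}\,d\alpha,
\end{equation*}
where $a(\alpha) := \chi(f(\alpha))\,f'(\alpha) \in C^\infty_c(-\delta-\varepsilon,\delta+\varepsilon)$ and the phase is $\Phi(\alpha) := f(\alpha)\,s - \alpha\,t$. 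In this form the large parameter is $1/\tau$, and $\rho_\tau$ is expressed as a standard oscillatory integral with a smooth, compactly supported amplitude.

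Next I would exploit that $\Phi'(\alpha) = f'(\alpha)\,s - t$ is bounded away from zero on the support of $a$. Indeed, assumption \eqref{Cond(t,s)} says that on $|\alpha|\leq \delta+\varepsilon$ the quantity $f'(\alpha)s - t$ has constant sign and satisfies $|\Phi'(\alpha)| \geq c_0 := \inf_{|\alpha|\leq \delta+\varepsilon} |f'(\alpha) s - t|$ (which by \eqref{Cond(t,s)} is itself $\geq \varepsilon$). The non-stationary phase mechanism then applies: using the identity $e^{i\Phi/\tau} = (\tau/(i\Phi'))\partial_\alpha(e^{i\Phi/\tau})$, I would integrate by parts $2n$ times. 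Denoting $L^*(b) = -\partial_\alpha(\tau b/(i\Phi'))$ the formal adjoint of the amplification operator, the $2n$-fold iteration gives $\int a\, e^{i\Phi/\tau}\,d\alpha = \int (L^*)^{2n}(a)\,e^{i\Phi/\tau}\,d\alpha$, and $(L^*)^{2n}(a)$ is a sum of terms of the form $\tau^{2n} a^{(k)} (\Phi')^{-p_0}\prod_j (\Phi^{(q_j)})^{p_j}$ with total "weight" producing a $\tau^{2n}$ prefactor.

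The final step is to bound $(L^*)^{2n}(a)$ uniformly in $(t,s)$. Since $f \in C^\infty$ and the integration range $[-\delta-\varepsilon,\delta+\varepsilon]$ is compact and strictly inside $(-R,R)$, all derivatives $f^{(k)}$, and hence $\Phi^{(k)}(\alpha) = f^{(k)}(\alpha)\,s$ for $k\geq 2$, are controlled on the support; the $s$-factors that appear are absorbed by the analogous $s$-factors hidden in $1/\Phi'$ once one groups the terms properly, keeping only the essential $1/c_0^{2n}$ dependence. Combined with the prefactor $1/(2\pi\tau)$, this yields $|\rho_\tau(t,s)| \leq C_{n,\varepsilon}\,\tau^{2n-1}/c_0^{2n}$, which is exactly \eqref{RhoSmall}. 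The hard part is purely combinatorial: after iterated integration by parts the naive estimate of $(L^*)^{2n}(a)$ produces a priori higher inverse powers of $\Phi'$, and one must regroup the contributions of $\Phi''/\Phi'^2$ and its higher-order analogues to preserve the sharp exponent $2n$ in the denominator claimed in the statement, much as one does for a Fourier integral operator whose phase $\varphi(\mu_\tau,t,s) = \mu_\tau s - g(\mu_\tau\tau)t$ satisfies $\partial_{\mu_\tau}\varphi \neq 0$ on the region \eqref{Cond(t,s)}.
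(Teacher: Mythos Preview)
Your approach is correct and is the standard non-stationary phase argument; the paper uses the same underlying idea but implements it with two technical differences that you may find instructive.

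First, the paper does \emph{not} perform your change of variables $\mu_\tau\tau = f(\alpha)$; it simply sets $\alpha_\tau = \mu_\tau\tau$, obtaining the phase $\alpha_\tau s - g(\alpha_\tau)t$ instead of your $f(\alpha)s - \alpha t$. Second, and more substantially, rather than iterating the first-order identity $e^{i\Phi/\tau} = (\tau/i\Phi')\partial_\alpha e^{i\Phi/\tau}$ a total of $2n$ times, the paper computes the \emph{second} derivative of the exponential directly, obtaining
\[
-\partial_{\alpha_\tau}^2 e^{i(\alpha_\tau s - g(\alpha_\tau)t)/\tau}
= \Bigl(\tfrac{1}{\tau^2}(s - g'(\alpha_\tau)t)^2 + \tfrac{i}{\tau} g''(\alpha_\tau)t\Bigr)\, e^{i(\alpha_\tau s - g(\alpha_\tau)t)/\tau},
\]
so that $e^{i\Phi/\tau} = -\tau^2 G_\tau(\alpha_\tau)\,\partial_{\alpha_\tau}^2 e^{i\Phi/\tau}$ with $G_\tau = \bigl((s-g't)^2 + i\tau g'' t\bigr)^{-1}$, and then iterates this $n$ times. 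The payoff is that $G_\tau$ and all its $\alpha_\tau$-derivatives involve only $t$ (which is bounded by $T$) and $(s - g'(\alpha_\tau)t)$, never a bare factor of $s$. This entirely sidesteps what you call ``the hard part'': there is no need to regroup $s$-factors against $1/\Phi'$, because unbounded $s$-factors never appear in the amplitude after integration by parts. Your route works too---the ratios $\Phi^{(k)}/\Phi'$ are indeed uniformly bounded under \eqref{Cond(t,s)} since $|\Phi'|\gtrsim |s|$ for $|s|$ large---but the paper's second-order device makes the bookkeeping that worried you disappear.
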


\begin{proof}
		Recall that $\rho_\tau$ is given by \eqref{KernelDiscTransmutation}. Hence
		\begin{equation}
			\label{KernelDiscTransmutation-2}
			\rho_\tau(t, s) = \frac{1}{2\pi \tau } \int_{- \pi}^{\pi} \exp\left(\frac{i}{\tau} \left(\alpha_\tau s - g(\alpha_\tau)t  \right)\right) \chi(\alpha_\tau) \, d\alpha_\tau.
		\end{equation}
		Remark then that
		\begin{multline*}
			- \frac{d^2}{d\alpha_\tau^2}\left( \exp\left(\frac{i}{\tau} \left(\alpha_\tau s - g(\alpha_\tau)t  \right)\right) \right)
			\\
			= \left(\frac{1}{\tau^2} (s- g'(\alpha_\tau) t)^2  + \frac{i}{\tau} g''(\alpha_\tau)t\right) \exp\left(\frac{i}{\tau} \left(\alpha_\tau s -g(\alpha_\tau)t \right)\right).
		\end{multline*}
				
		For $(t,s) \in (0,T) \times \R$ satisfying \eqref{Cond(t,s)} or, equivalently,
		\begin{equation}
			\label{Cond(t,s)-000}
			t \sup_{|\alpha_\tau| \leq f(\delta + \varepsilon)} \{ g'(\alpha_\tau)\} + \tilde \varepsilon < s \quad \hbox{or}  \quad s < t \inf_{|\alpha_\tau| \leq f(\delta + \varepsilon)} \{g'(\alpha_\tau)\} - \tilde \varepsilon,
		\end{equation}
		for some $\tilde \varepsilon>0$,
		the right hand-side of this identity does not vanish, and then we can write, for all $\alpha_\tau$ with $|\alpha_\tau| \leq f(\delta + \varepsilon)$, 
		\begin{multline}
			\label{StationaryTau}
			\ds \exp\left(\frac{i}{\tau} \left(\alpha_\tau s -g(\alpha_\tau)t \right)\right) = 	- \tau^2 G_\tau(\alpha_\tau)\frac{d^2}{d\alpha_\tau^2}\left( \exp\left(\frac{i}{\tau} \left(\alpha_\tau s - g(\alpha_\tau)t  \right)\right) \right)
			\\
			 \hbox{with } \ds G_\tau (\alpha_\tau) = \frac{1}{(s- g'(\alpha_\tau)t)^2 + i \tau g''(\alpha_\tau)t}.
		\end{multline}
		Hence, using the fact that $\chi$ is compactly supported in $(-f(\delta + \varepsilon), f(\delta+\varepsilon))$, we get
		\begin{align*}
				\lefteqn{\rho_\tau(t, s) = 
				\frac{1}{2\pi \tau } \int_{- \pi}^{\pi} 
					- \tau^2 G_\tau(\alpha_\tau) \frac{d^2}{d\alpha_\tau^2} \left(\exp\left(\frac{i}{\tau} \left(\alpha_\tau s - g(\alpha_\tau)t  \right)\right) \right)
					 \chi(\alpha_\tau) \, d\alpha_\tau}
					\nonumber
					\\
					 =& 
						- \frac{\tau}{2\pi } \int_{- \pi}^{\pi} 
							\exp\left(\frac{i}{\tau} \left(\alpha_\tau s - g(\alpha_\tau)t  \right)\right) 
							\frac{d^2}{d\alpha_\tau^2}\left(  G_\tau(\alpha_\tau) \chi(\alpha_\tau)\right) \, d\alpha_\tau.
					\nonumber
					\\
					 = &
						(-1)^{n}\frac{\tau^{2n-1}}{2\pi} \int_{- \pi}^\pi
							\exp\left(\frac{i}{\tau} \left(\alpha_\tau s - g(\alpha_\tau)t  \right)\right) 
							\left(\frac{d^2}{d\alpha_\tau^2}(  G_\tau(\alpha_\tau) \cdot )\right)^{n} \chi(\alpha_\tau) \, d\alpha_\tau,
									\label{KernelDiscTransmutation-3}
		\end{align*}
		where $ n \in \N$ and $\left(\frac{d^2}{d\alpha_\tau^2}(  G_\tau(\alpha_\tau) \cdot )\right)^{n}$ denotes the operator $\frac{d^2}{d\alpha_\tau^2}(  G_\tau(\alpha_\tau) \cdot )$ iterated $n$ times.
		
		We finally remark that, since $\chi$ is smooth and compactly supported on $(-f(\delta+\varepsilon), f(\delta + \varepsilon))$ and due to the explicit form of $G_\tau$ given by \eqref{StationaryTau}, for any $ n \in \N$, there exists a constant $C_{n,\varepsilon}$ such that for all $\alpha_\tau $ and $(t,s)$ satisfying \eqref{Cond(t,s)-000},
		$$
			\left|	\left( \frac{d^2}{d\alpha_\tau^2}(  G_\tau(\alpha_\tau) \cdot )\right)^{n} \chi(\alpha_\tau) \right| \leq \frac{C_{n,\varepsilon}}{(s- g'(\alpha_\tau )t)^{2n}}.
		$$
		This immediately yields \eqref{RhoSmall}.
\end{proof}

\subsection{The transmutation operator}

We then prove that the transmutation operator is bounded as an operator from $L^2(\tau \Z)$ in $L^2(\R)$. 
\begin{proposition}\label{Prop-TransmutOp}
	Let $f$ be a smooth function describing the time discrete operator $\T_\tau$ as in \eqref{TonEig}, assume \eqref{HypG-1}--\eqref{HypG-2}--\eqref{HypG-3}, and fix $\delta \in (0,R)$.

	Let $\varepsilon >0$ such that $\delta + \varepsilon < R$, and choose the function $\chi$ in Theorem \ref{Thm-DiscTransmutation} supported in $(- f(\delta + \varepsilon), f( \delta + \varepsilon))$ and real-valued.

	For $\tau>0$, set $\I_\tau$ the transformation defined for discrete functions $w_\tau$ compactly supported on $ \tau \Z$ with values in some Hilbert space $H$ by
	\begin{equation}
		\label{Itau}
			\I_\tau (w_\tau) (t) = \tau \sum_{k \in \Z} \rho_\tau(t, k \tau) w_\tau(k \tau),
	\end{equation}
	where $\rho_\tau$ is the kernel function given by \eqref{KernelDiscTransmutation}.

	Then the operator $\I_\tau$ is bounded from $L^2(\tau \Z, H)$ to $L^2(\R, H)$ and
	\begin{equation}
		\label{ItauBounded}
		\norm{\I_\tau}_{\mathfrak{L} (L^2(\tau \Z; H); L^2 (\R; H))} \leq \norm{\chi}_\infty  \sqrt{\sup_{|\alpha| \leq \delta+\varepsilon} f'(\alpha)}.
	\end{equation}
\end{proposition}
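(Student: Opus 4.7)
My plan is to reduce the bound to two Plancherel/Parseval identities (one continuous, one discrete on $\tau\Z$) by recognizing $\I_\tau(w_\tau)$ as essentially an inverse Fourier transform evaluated at $t$ after a suitable change of variable encoded by the symbol $g$. I treat the $H$-valued case exactly as the scalar case since all manipulations below are linear in $w_\tau$.

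First, I would plug the definition \eqref{KernelDiscTransmutation} of $\rho_\tau$ into \eqref{Itau} and exchange sum and integral, which is legitimate once we restrict to $w_\tau$ finitely supported, to obtain
$$
\I_\tau(w_\tau)(t) = \frac{1}{2\pi}\int_{-\pi/\tau}^{\pi/\tau} \exp\!\left(-\frac{i g(\mu_\tau \tau)t}{\tau}\right) \chi(\mu_\tau \tau)\, \F_\tau[w_\tau](-\mu_\tau)\, d\mu_\tau.
$$
Because $\chi$ is supported in $(-f(\delta+\varepsilon),f(\delta+\varepsilon))\subset(-\pi,\pi)$, the integrand vanishes for $|\mu_\tau\tau|>f(\delta+\varepsilon)$, so I may perform the change of variable $\xi=g(\mu_\tau\tau)/\tau$, i.e.\ $\mu_\tau=f(\xi\tau)/\tau$, $d\mu_\tau=f'(\xi\tau)\,d\xi$, which, thanks to \eqref{HypG-3} and the fact that $g=f^{-1}$, is a smooth diffeomorphism mapping the support of the integrand onto $\{|\xi\tau|\leq\delta+\varepsilon\}$.

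After this substitution, $\I_\tau(w_\tau)(t)$ becomes
$$
\I_\tau(w_\tau)(t) = \frac{1}{2\pi}\int_{\R} e^{-i\xi t}\, V_\tau(\xi)\, d\xi, \qquad V_\tau(\xi) := \chi(f(\xi\tau))\, \F_\tau[w_\tau]\!\left(-\tfrac{f(\xi\tau)}{\tau}\right) f'(\xi\tau)\, \mathbf{1}_{|\xi\tau|\leq \delta+\varepsilon}.
$$
Thus $\I_\tau(w_\tau)$ is (up to a sign in $t$) the inverse Fourier transform on $\R$ of the compactly supported function $V_\tau$, which by the standard Plancherel identity yields
$$
\norm{\I_\tau(w_\tau)}_{L^2(\R;H)}^2 = \frac{1}{2\pi}\int_{\R} \norm{V_\tau(\xi)}_H^2\, d\xi.
$$

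Finally, I would reverse the change of variable, $d\xi = d\mu_\tau/f'(\xi\tau)$, to bring the integral back to the $\mu_\tau$ variable, which gives a leftover factor $f'(\xi\tau) = f'(g(\mu_\tau\tau))$ bounded by $\sup_{|\alpha|\leq\delta+\varepsilon} f'(\alpha)$ on the relevant support, together with $|\chi|^2\leq\norm{\chi}_\infty^2$. This yields
$$
\norm{\I_\tau(w_\tau)}_{L^2(\R;H)}^2 \leq \norm{\chi}_\infty^2 \sup_{|\alpha|\leq\delta+\varepsilon}\! f'(\alpha)\cdot \frac{1}{2\pi}\int_{-\pi/\tau}^{\pi/\tau}\! \norm{\F_\tau[w_\tau](-\mu_\tau)}_H^2 d\mu_\tau,
$$
and applying the discrete Parseval identity \eqref{ParsevalDis} to the last integral gives exactly \eqref{ItauBounded}; density of finitely supported sequences in $L^2(\tau\Z;H)$ then removes the initial restriction. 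The only mildly delicate point is carefully tracking the change of variable $\mu_\tau\leftrightarrow\xi$ together with the oddness of $f,g$ to ensure that the supports match and the Jacobian factor $f'(\xi\tau)$ is bounded by $\sup_{|\alpha|\leq\delta+\varepsilon} f'(\alpha)$; everything else is routine once the representation of $\I_\tau(w_\tau)$ as an inverse Fourier transform is obtained.
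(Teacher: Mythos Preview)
Your proof is correct and rests on the same two ingredients as the paper's (the discrete Parseval identity \eqref{ParsevalDis} and the change of variable $\mu_\tau\leftrightarrow\xi=g(\mu_\tau\tau)/\tau$), but the organization is slightly different. The paper argues by duality: it pairs $\I_\tau(w_\tau)$ with a compactly supported $z\in L^2(\R;H)$, rewrites the pairing via Fubini as an integral of $\langle \F_\tau[w_\tau](\mu_\tau),\widehat z(g(\mu_\tau\tau)/\tau)\rangle_H$, applies Cauchy--Schwarz, and performs the change of variable on the factor involving $\widehat z$. You instead recognize $\I_\tau(w_\tau)$ directly as an inverse Fourier transform of the compactly supported function $V_\tau$, apply Plancherel on $\R$, and then undo the change of variable. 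Your route is marginally more direct (no auxiliary $z$, no Cauchy--Schwarz step), and yields the same constant; the paper's duality formulation, on the other hand, makes the role of the adjoint more transparent, which is convenient when one later wants the dual statement (Proposition~\ref{Prop-TransmutOp-Reverse}).
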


\begin{proof}
	For $w_\tau$ in $L^2 (\tau \Z; H)$ and $z \in L^2(\R, H)$, both compactly supported in time, using Fubini's formulas, we compute
	\begin{align*}
		\lefteqn{ 
			\int_\R \langle  \I_\tau (w_\tau)(t), z(t) \rangle_H \, dt 
		}
		\\
		& =   \int_\R  \left \langle \tau \sum_{k \in \Z} \frac{1}{ 2 \pi} \int_{-\pi/\tau}^{\pi/\tau} e^{ - i g(\mu_\tau \tau)t/\tau} \chi(\mu_\tau \tau) e^{i \mu_\tau k \tau} \, d\mu_\tau \, w_\tau(k \tau), z(t) \right\rangle_H\, dt
		\\
		& = \frac{1}{2 \pi } \int_{-\pi/\tau}^{\pi/\tau} \chi(\mu_\tau \tau) 
			\left\langle \tau \sum_{k \in \Z} w_\tau(k\tau)e^{i \mu_\tau k\tau} ,  \int_\R  z(t) e^{  i g(\mu_\tau \tau)t/\tau} \, dt
			\right\rangle_H \, d\mu_\tau
		\\
		& =  \frac{1}{2\pi}  \int_{-\pi/\tau}^{\pi/\tau}  \chi(\mu_\tau \tau)\left\langle \mathcal{F}_\tau[w_\tau](-\mu_\tau),  \widehat{ z}\left( -\frac{ g(\mu_\tau \tau)}{\tau} \right)\right\rangle_H \, d \mu_\tau
		\\
		&= \frac{1}{2\pi}  \int_{-\pi/\tau}^{\pi/\tau} \chi(-\mu_\tau \tau)\left\langle \mathcal{F}_\tau [w_\tau](\mu_\tau),  \widehat{ z}\left(\frac{ g(\mu_\tau \tau)}{\tau} \right)\right\rangle_H \, d \mu_\tau, 
	\end{align*}
	where $\widehat{z}$ denotes the Fourier transform of $z$. Hence, using \eqref{ParsevalDis},
	\begin{align*}
		\lefteqn{
		\left|
			\int_\R \langle  \I_\tau (w_\tau)(t), z(t) \rangle_H \, dt 
		\right|
		}
		\\
		& \leq 
		\left(\frac{1}{2\pi} \int_{-\pi/\tau}^{\pi/\tau} \norm{\mathcal{F}_\tau[w_\tau](\mu_\tau)}_H^2\, d\mu_\tau \right)^{1/2} 
		\\
		& \qquad \qquad 
		\left(\frac{1}{2\pi} \int_{-\pi/\tau}^{\pi/\tau} |\chi(-\mu_\tau \tau)|^2 \norm{\widehat{ z} \left(\frac{ g(\mu_\tau \tau)}{\tau} \right)}_H^2 \, d \mu_\tau\right)^{1/2}
		\\
		& \leq \norm{w_\tau}_{L^2(\tau \Z; H)} 
		\norm{\chi}_\infty  \sqrt{\sup_{|\alpha| \leq \delta + \varepsilon} f'(\alpha)}\norm{ z}_{L^2(\R; H)}.
	\end{align*}
	where the last line is justified by the change of variable $\mu \tau = g(\mu_\tau \tau)$, which is valid due to Assumption \eqref{HypG-3} and the fact that $\chi$ is supported in the interval $(-f(\delta +\varepsilon),f(\delta+\varepsilon))$:
	\begin{align*}
		\lefteqn{
		\frac{1}{2\pi} \int_{-\pi/\tau}^{\pi/\tau} |\chi(-\mu_\tau \tau)|^2 \norm{\widehat{ z} \left(\frac{ g(\mu_\tau \tau)}{\tau} \right)}_H^2 \, d \mu_\tau}
		\\
		& = 
		\frac{1}{2\pi} \int_{-(\delta+\varepsilon)/\tau}^{(\delta+\varepsilon)/\tau}|\chi(f(\mu \tau))|^2 \norm{\widehat{ z} (\mu)}_H^2 \, f'(\mu \tau) d \mu
		\\
		& \leq \norm{\chi}_\infty^2\Big(\sup_{|\alpha| \leq \delta + \varepsilon} f'(\alpha)\Big)\frac{1}{2\pi} \int_{-(\delta+\varepsilon)/\tau}^{-(\delta+\varepsilon)/\tau}\norm{\widehat{ z}(\mu)}_H^2 \, d\mu
		\\
		& \leq
		\norm{\chi}_\infty^2\Big(\sup_{|\alpha| \leq \delta + \varepsilon} f'(\alpha)\Big) \frac{1}{2 \pi} \int_\R  \norm{\widehat{ z}(\mu)}_H^2\, d \mu
		\\
		& 
		\leq 
		\norm{\chi}_\infty^2  \Big(\sup_{|\alpha| \leq \delta + \varepsilon} f'(\alpha)\Big) \norm{ z}_{L^2(\R; H)}^2.
	\end{align*}
	This concludes the proof of \eqref{ItauBounded}.
\end{proof}

\subsection{Proof of Theorem \ref{Thm-Main}}\label{Sec-Proof-Main}

\begin{proof}[Proof of Theorem \ref{Thm-Main}.]
	Let $y^0 \in \mathfrak{C}(\delta/\tau)$ and $y_\tau^k$ the corresponding solution of \eqref{AbstractTimeDisc}. Using Theorem \ref{Thm-DiscTransmutation}, and setting $y(t)$ as in \eqref{Transmutation} with $\chi$ compactly supported on $(-f(\delta +\varepsilon), f(\delta+ \varepsilon))$ for $\varepsilon >0$ small enough so that $\delta + \varepsilon < R$, we obtain the solution of \eqref{FD-Abstract} with initial data $y^0$.
	
	Using the observability \eqref{FD-Obs} of the continuous system \eqref{FD-Abstract}, we thus obtain 
	\begin{equation}\label{KeyTransmutation}
		\norm{y^0}_\X^2 \leq C \int_0^{T_0} \norm{ \Bcal y(t)}_\U^2 \, dt
		\leq C \int_0^{T_0} \norm{ \tau \sum_{k \in \Z} \rho_\tau(t, k \tau) \Bcal y_\tau^k}_\U^2 \, dt. 
	\end{equation}
	Estimate \eqref{KeyTransmutation} is the center of our argument. Now, we only have to check that the right hand-side of \eqref{KeyTransmutation} can be bounded by the right hand-side of \eqref{Fully-Obs}. 
	
	 To do this, we set 
	\begin{equation}
		\label{T-epsilon-1}
	T_{1, \varepsilon} = \frac{T_0} {\underset{|\alpha|\leq \delta + \varepsilon}\inf \{f'(\alpha) \}} + \varepsilon
	\quad \hbox{	and }\quad
	t_{1, \varepsilon} = - \varepsilon,
	\end{equation}
	and write
	\begin{equation}
		\label{RHS-Transmutation}
		\int_0^{T_0} \norm{ \tau \sum_{k \in \Z} \rho_\tau(t, k \tau) \Bcal y_\tau^k}_\U^2 \, dt \leq 3 \mathcal{O}_{\leq t_{1, \varepsilon}} + 3 \mathcal{O}_{t_{1, \varepsilon}, T_{1, \varepsilon} } + 3\mathcal{O}_{\geq T_{1, \varepsilon}}, 
	\end{equation}
	where 
	\begin{eqnarray}
			\mathcal{O}_{\leq t_{1, \varepsilon}} 
			& = &
			\int_0^{T_0} \Big\| \tau \sum_{k\tau \leq t_{1, \varepsilon}} \rho_\tau(t, k \tau) \Bcal y_\tau^k\Big\|_\U^2 \, dt,
			\label{ObsTrans-1}
			\\
			\mathcal{O}_{t_{1, \varepsilon}, T_{1, \varepsilon} }
			& = &
			\int_0^{T_0} \Big \| \tau \sum_{t_{1, \varepsilon} < k\tau < T_{1, \varepsilon}} \rho_\tau(t, k \tau) \Bcal y_\tau^k\Big\|_\U^2 \, dt, 
			\label{ObsTrans-2}
			\\
			\mathcal{O}_{\geq T_{1, \varepsilon}}
			& = &
			\int_{0}^{T_0} \Big\| \tau \sum_{k\tau \geq T_{1, \varepsilon}} \rho_\tau(t, k \tau) \Bcal y_\tau^k\Big\|_\U^2 \, dt.
			\label{ObsTrans-3}
	\end{eqnarray}
	
	Now, using Proposition \ref{RhoDescription} and Proposition \ref{Prop-TransmutOp}, we prove the following facts: 
	\begin{itemize}
		\item
	There exists a constant $C$ independent of $\tau >0$ such that
	\begin{equation}
		\label{EstObsTrans-2}
			\mathcal{O}_{t_{1, \varepsilon}, T_{1, \varepsilon} } \leq C \tau \sum_{t_{1, \varepsilon} < k\tau < T_{1, \varepsilon}} \norm{\Bcal y_\tau^k}_\U^2. 
	\end{equation}
		\item 
	For all $n \in \N^*$, there exists a constant $C_n = C_{n, \varepsilon,p,\delta}$ independent of $\tau >0$ such that
	\begin{equation}
		\label{EstObsTrans-1et3}
			\mathcal{O}_{\leq t_{1, \varepsilon}} +\mathcal{O}_{\geq T_{1, \varepsilon}}
			\leq C_n \tau^{4n-2 - 2p} \norm{y^0}_\X^2. 
	\end{equation}
	\end{itemize}
	
	Indeed, estimate \eqref{EstObsTrans-2} can be deduced immediately from Proposition \ref{Prop-TransmutOp} by choosing $w_\tau(k\tau) = \Bcal y^k_\tau$ for $k \tau \in (t_{1, \varepsilon}, T_{1, \varepsilon})$ and $0$ for $k\tau \notin  (t_{1, \varepsilon}, T_{1, \varepsilon})$.
	
	Let us then focus on \eqref{EstObsTrans-1et3}. According to Proposition \ref{RhoDescription}, we have
	\begin{equation}
			\mathcal{O}_{\leq t_{1, \varepsilon}} 
		 	\leq 
			\int_0^{T_0} \left(\sup_{k \in \Z} \norm{\Bcal y_\tau^k}_\U^2\right)
			\left(\tau \sum_{k\tau \leq t_{1, \varepsilon}} |\rho_\tau(t, k \tau)| \right)^2
			 \, dt,
	\end{equation}
	Using that the discrete semi-group \eqref{AbstractTimeDisc} is conservative in norms $\X$ and $\mathcal{D}(\A^p)$, preserves $\mathfrak{C}(\delta/\tau)$, and that $\Bcal$ satisfies \eqref{C-p-boundedness}, we immediately have that
	\begin{equation}
		\label{EstCxk}
		\sup_{k \in \Z} \norm{\Bcal y^k_\tau}_\U^2 \leq \frac{C_{p, \delta}^2}{\tau^{2p}} \norm{y^0}_\X^2. 
	\end{equation}
	Besides, according to Proposition \ref{RhoDescription}, for all $t \in (0, T_0  )$, and $n \geq 1$, 	
	\begin{multline}
	\tau \sum_{k\tau \leq t_{1, \varepsilon}} |\rho_\tau(t, k \tau)| 
		\leq 
		\tau  \sum_{k\tau \leq t_{1, \varepsilon}} \frac{C_{n} \tau^{2n-1}}{ \Big(\underset{|\alpha|\leq \delta + \varepsilon}\inf | f'(\alpha) k \tau - t|\Big)^{2n} }
		\\
		\leq
		\tau  \sum_{k\tau \leq t_{1, \varepsilon}} \frac{C_{n} \tau^{2n-1}}{\Big(t -  \underset{|\alpha|\leq \delta + \varepsilon}\inf  \{f'(\alpha)\} k \tau\Big)^{2n} }
		\leq	C_{n} \tau^{2n-1}, 
	\end{multline}
	for some constant $C_{n}$ depending on $n\geq 1$.

	Similar estimates can be done to bound $\mathcal{O}_{\geq T_{1, \varepsilon}}$, yielding \eqref{EstObsTrans-1et3} immediately.

	We now conclude the proof of Theorem \ref{Thm-Main}. Estimates \eqref{RHS-Transmutation} together with \eqref{EstObsTrans-2} and \eqref{EstObsTrans-1et3} show that
	\begin{multline}
		\int_0^{T_0} \norm{ \tau \sum_{k \in \Z} \rho_\tau(t, k \tau) \Bcal y_\tau^k}_\U^2 \, dt 
		\leq C \tau \sum_{t_{1, \varepsilon} < k\tau < T_{1, \varepsilon}} \norm{\Bcal y_\tau^k}_\U^2 \\
		+ C_n \tau^{4n-2-2p}  \norm{y^0}_\X^2. 
		\label{TransmutationAlmostOK0}
	\end{multline}
	Thus \eqref{KeyTransmutation} implies
	\begin{equation}
		\label{TransmutationAlmostOK}
		\norm{y^0}_\X^2 \left(1- C_n \tau^{4n-2-2p}   \right)
		\leq 
		C \tau \sum_{t_{1, \varepsilon} < k\tau < T_{1, \varepsilon}} \norm{\Bcal y_\tau^k}_\U^2.
	\end{equation}
	Since the discrete semigroup \eqref{AbstractTimeDisc} is conservative, we can shift the time in \eqref{TransmutationAlmostOK}, and obtain 
	\begin{equation}
		\label{TransmutationAlmostOK-1}
			\norm{y^0}_\X^2 \left(1- C_n \tau^{4n-2-2p} \right)
			\leq 
			C \tau \sum_{0 < k\tau < T_{1, \varepsilon}- t_{1, \varepsilon}} \norm{\Bcal y_\tau^k}_\U^2.
	\end{equation}
	Taking $n \geq 1 + p$ and $\tau >0$ small enough, we obtain \eqref{Fully-Obs} with 
	$$
		T_\varepsilon = T_{1, \varepsilon} - t_{1, \varepsilon} = \frac{T_0}{\underset{|\alpha|\leq \delta + \varepsilon}\inf \{f'(\alpha) \} } + 2 \varepsilon.
	$$
	This concludes the proof of Theorem \ref{Thm-Main} by taking $\varepsilon>0$ small enough.
\end{proof}

\section{A reverse formula and its applications}\label{Sec-Reverse}

\subsection{A reverse formula}

In this section, our aim is to explain that the representation formula derived in Theorem \ref{Thm-DiscTransmutation} allowing to write the solutions of the continuous abstract equation \eqref{FD-Abstract} as functions of those of the time-discrete one \eqref{AbstractTimeDisc} can be reversed:

\begin{theorem}\label{DiscTransmutationReverse}
	Let $f$ be a smooth function describing the time discrete operator $\T_\tau$ as in \eqref{TonEig}, assume \eqref{HypG-1}--\eqref{HypG-2}--\eqref{HypG-3}, and fix $\delta \in (0,R)$.

	Let $f$ be a smooth function describing the time discrete evolution as in \eqref{TonEig}, and assume \eqref{HypG-1}, \eqref{HypG-2} and \eqref{HypG-3}.

	Let $\chi$ be a $C^\infty$ function compactly supported in $(-R, R)$ and equal to $1$ in $(-\delta, \delta)$. For $\tau >0$, define then $q_\tau(t,s)$ for $(t, s) \in \R^2$ as 
	\begin{equation}
		\label{KernelDiscTransmutation-Rev}
		q_\tau(t, s) = \frac{1}{2\pi} \int_{ \R} \exp\left(\frac{i f(\mu \tau) s}{\tau} \right) \chi\left(\mu \tau \right) e^{ - i \mu t}\, d\mu.
	\end{equation}
	Then, if $y^0 \in \mathfrak{C}(\delta/\tau)$ and $y(t)$ is the corresponding solution of \eqref{FD-Abstract}, the function $y_\tau$ defined for $k \in \Z$ by 
	\begin{equation}
		\label{Transmutation-Rev}
		y^k_\tau = \int_\R q_\tau(t, k \tau) y(t) \, dt
	\end{equation}
	is the solution of \eqref{AbstractTimeDisc} with initial data $y^0$.
\end{theorem}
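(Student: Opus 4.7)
The proof will closely parallel that of Theorem~\ref{Thm-DiscTransmutation}, with the roles of sum and integral, and of the variables $\mu_j$ and $\mu_{j,\tau}$, interchanged. The plan is to expand the initial datum in the eigenbasis, substitute into \eqref{Transmutation-Rev}, and reduce to a single Fourier-inversion identity evaluated at each eigenvalue $\mu_j$.

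Since $\A$ has compact resolvent, $\mathfrak{C}(\delta/\tau)$ is finite-dimensional, so for $y^0 = \sum_{|\mu_j|\leq \delta/\tau} a_j \Phi_j$ the solution of \eqref{FD-Abstract} is the finite sum $y(t) = \sum_{|\mu_j|\leq \delta/\tau} a_j e^{i\mu_j t}\Phi_j$. First I would observe that, because $\chi$ is $C^\infty$ with compact support in $(-R,R)$, the kernel $q_\tau(\,\cdot\,, s)$ defined by \eqref{KernelDiscTransmutation-Rev} is a Schwartz function of $t$ for each $s$, being the inverse Fourier transform of a compactly supported smooth function. In particular the integral in \eqref{Transmutation-Rev} converges absolutely, Fubini applies, and I may exchange the finite sum over $j$ with the integral over $t$ to write
\begin{equation*}
	y^k_\tau = \sum_{|\mu_j|\leq \delta/\tau} a_j \Phi_j \int_\R q_\tau(t, k\tau) e^{i\mu_j t}\, dt.
\end{equation*}

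The key computation is then to identify $\int_\R q_\tau(t, k\tau) e^{i\mu_j t}\, dt$. Inserting the definition \eqref{KernelDiscTransmutation-Rev} of $q_\tau$ and applying Fourier inversion in the $t$ variable (equivalently using the distributional identity $\frac{1}{2\pi}\int_\R e^{-i(\mu - \mu_j)t}\, dt = \delta_0(\mu - \mu_j)$, which is rigorous here because the inner $\mu$-integrand is smooth and compactly supported), I obtain
\begin{equation*}
	\int_\R q_\tau(t, k\tau) e^{i\mu_j t}\, dt = \exp\!\left(\frac{i f(\mu_j \tau) k\tau}{\tau}\right) \chi(\mu_j \tau) = e^{i\mu_{j,\tau} k\tau}\chi(\mu_j\tau),
\end{equation*}
where in the last equality I use the definition \eqref{TonEig} of $\mu_{j,\tau}$.

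Finally, since $|\mu_j|\leq \delta/\tau$ implies $|\mu_j\tau|\leq \delta$ and $\chi\equiv 1$ on $(-\delta,\delta)$, the cutoff factor is identically one for the indices at stake. Therefore
\begin{equation*}
	y^k_\tau = \sum_{|\mu_j|\leq \delta/\tau} a_j e^{i\mu_{j,\tau} k\tau}\Phi_j,
\end{equation*}
which is precisely the expansion \eqref{Y-tau-Expanded} of the solution of \eqref{AbstractTimeDisc} with initial data $y^0$. There is no genuine obstacle in this argument; the only point requiring some care is the justification of Fubini and Fourier inversion, which, as noted, is handled by the compact support of $\chi$ and the finite-dimensionality of $\mathfrak{C}(\delta/\tau)$. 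Note the structural asymmetry with Theorem~\ref{Thm-DiscTransmutation}: the reverse formula integrates over all of $\R$ in $\mu$ (matching the continuous Fourier transform used on $y(t)$) while cutting off on the scale $R$ of the physical frequencies $\mu\tau$, whereas the direct formula integrates over $(-\pi/\tau,\pi/\tau)$ in $\mu_\tau$ and cuts off on the scale $f(R)$ of the discrete phases $\mu_\tau\tau$.
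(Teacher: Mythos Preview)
Your proof is correct and follows essentially the same approach as the paper's: expand $y(t)$ on the eigenbasis, compute $\int_\R q_\tau(t,s)e^{i\mu t}\,dt = \chi(\mu\tau)\exp(i f(\mu\tau)s/\tau)$ by Fourier inversion, and use $\chi(\mu_j\tau)=1$ for $|\mu_j|\leq\delta/\tau$. The paper's own proof is in fact just a two-line sketch leaving the details to the reader, so your write-up is more complete than the original.
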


\begin{proof}
		The proof is similar to the one of Theorem \ref{Thm-DiscTransmutation}: Write the solution $y(t)$ of \eqref{FD-Abstract} on the basis of eigenfunctions of $\A$, and remark that for all $\mu \in \R$ satisfying $|\mu| \leq \delta/\tau $, due to the definition of $q_\tau$ in \eqref{KernelDiscTransmutation-Rev}, for all $s \in \R$,
		$$
			\int_\R q_\tau(t, s) \exp( i \mu t) \, dt = \chi(\mu \tau) \exp \left( \frac{i f(\mu \tau) s}{\tau}\right) = \exp \left( \frac{i f(\mu \tau) s}{\tau}\right).
		$$
		Details are left to the reader.
\end{proof}

\begin{remark}
	Similarly as in Remark \ref{Remark-a}, if $R$ in \eqref{HypG-2} satisfies $R = \infty$, Theorem \ref{DiscTransmutationReverse} still holds when taking $\chi \equiv 1$, which consists in replacing $q_\tau$ in \eqref{KernelDiscTransmutation-Rev} by  $q_{\tau,0}$ defined by
	$$
		q_{\tau,0}(t, s) = \frac{1}{2\pi} \int_{ \R} \exp\left(\frac{i f(\mu \tau) s}{\tau} \right) e^{ - i \mu t}\, d\mu.	
	$$
	Note that this definition has to be understood in the sense of $\mathcal{D}'(\R^2)$ as the integrand is not integrable.
\end{remark}

Similarly, one can prove the following:

\begin{theorem}\label{DiscTransmutationReverse-2}
With the notations and assumptions of Theorem \ref{DiscTransmutationReverse}, if $0<\delta_1 < \delta_2 <R$ and $y^0 \in \mathfrak{C}(\delta_2/\tau)\cap \mathfrak{C}(\delta_1/\tau)^\perp$, the same result holds for a smooth function $\chi$ compactly supported in $(-R, R)$ and equal to $1$ in $(-\delta_2, - \delta_1) \cup (\delta_1, \delta_2)$.
\end{theorem}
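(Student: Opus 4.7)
The plan is to adapt the short proof of Theorem~\ref{DiscTransmutationReverse} essentially verbatim, since the only change is that the initial datum is now localized in an annular, rather than disc, frequency region. Correspondingly, the support of the cut-off $\chi$ is shrunk so that $\chi$ still equals $1$ on the set of frequencies that are actually active in $y^0$.

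First, I would decompose $y^0$ on the spectral basis. The condition $y^0 \in \mathfrak{C}(\delta_2/\tau)\cap \mathfrak{C}(\delta_1/\tau)^\perp$ says exactly that only eigenmodes with $\delta_1/\tau < |\mu_j| \leq \delta_2/\tau$ contribute; because $\A$ has compact resolvent, only finitely many such indices exist, so the decomposition
$$
y^0 = \sum_{\delta_1/\tau < |\mu_j| \leq \delta_2/\tau} a_j \Phi_j,
\qquad
y(t) = \sum_{\delta_1/\tau < |\mu_j| \leq \delta_2/\tau} a_j\, e^{i \mu_j t}\,\Phi_j
$$
is a finite sum.

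Next, I would reuse the Fourier identity underlying the proof of Theorem~\ref{DiscTransmutationReverse}. Since $\chi$ is smooth and compactly supported in $(-R,R)$, the function $q_\tau(\cdot, s)$ defined by \eqref{KernelDiscTransmutation-Rev} is Schwartz in $t$, with Fourier transform (in the $t$ variable) equal to $\chi(\mu\tau)\exp\!\big(i f(\mu\tau) s/\tau\big)$. Fourier inversion therefore gives, for all $\mu, s \in \R$,
$$
\int_\R q_\tau(t, s)\, e^{i\mu t}\, dt \;=\; \chi(\mu\tau)\exp\!\Big(\tfrac{i f(\mu\tau)\,s}{\tau}\Big).
$$
Applying this mode by mode to the finite expansion of $y(t)$ above yields
$$
\int_\R q_\tau(t, k\tau)\, y(t)\, dt \;=\; \sum_{\delta_1/\tau < |\mu_j| \leq \delta_2/\tau} a_j\, \chi(\mu_j\tau)\, e^{i \mu_{j,\tau} k \tau}\, \Phi_j.
$$

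The only place where the new hypothesis on $\chi$ enters is the last step: for every index $j$ in the above sum, $|\mu_j\tau| \in (\delta_1, \delta_2]$, which is contained in the set where $\chi\equiv 1$ (the endpoint $|\mu_j\tau|=\delta_2$ being harmless as one may always take $\chi$ equal to $1$ on a slightly larger open neighbourhood, a freedom compatible with the hypotheses). Hence $\chi(\mu_j\tau)=1$ and the right-hand side coincides with the spectral representation \eqref{Y-tau-Expanded} of the solution $y_\tau^k$ of \eqref{AbstractTimeDisc} with initial datum $y^0$. There is no substantial obstacle here: the analytic content is already in Theorem~\ref{DiscTransmutationReverse}, and only the geometry of the support of $\chi$ has to be re-shaped to match the annular frequency localization of $y^0$.
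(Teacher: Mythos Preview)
Your proposal is correct and follows exactly the route the paper takes: the paper's own proof simply states that it is ``the same as the one of Theorem~\ref{DiscTransmutationReverse},'' and you have written out precisely that argument, adapting only the frequency localization of $y^0$ and the matching support condition on $\chi$. Your handling of the endpoint $|\mu_j\tau|=\delta_2$ is fine (and in fact automatic by continuity of $\chi$).
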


\begin{remark}
	The notation $\perp$ in $\mathfrak{C}(\delta_2/\tau)\cap \mathfrak{C}(\delta_1/\tau)^\perp$ stands for  the orthogonal complement with respect to the $\X$ scalar product, so that $\mathfrak{C}(\delta_2/\tau)\cap \mathfrak{C}(\delta_1/\tau)^\perp$ stands for the set $\hbox{Span}\{\Phi_j :  \delta_1 <\mu_j \tau \leq \delta_2  \}$. 
\end{remark} 

\begin{proof} 
	The proof of Theorem \ref{DiscTransmutationReverse-2} is the same as the one of Theorem \ref{DiscTransmutationReverse}. 
\end{proof}

As we will see below, these transmutation formula also yield non-trivial informations, for instance when dealing with uniform hidden regularity results or the optimality of the time-estimate \eqref{TimeConditionTau} in Theorem \ref{Thm-Main}.

\subsection{Uniform hidden regularity results}\label{Sec-Admissibility}

In this section, we are interested in the admissibility property - also called hidden regularity property - for system \eqref{FD-Abstract}. 

To be more precise, if $\Bcal \in \mathcal{L}(\mathcal{D}(\A^p), \U)$ for some $p\in \N$ and Hilbert space $\U$, system \eqref{FD-Abstract} is said to be admissible for $\Bcal$ if there exist a constant $C_0$ and a time $T_0>0$ such that any solution $y(t)$ of \eqref{FD-Abstract} with initial data $y^0 \in \mathcal{D}(\A^p)$ satisfies 
\begin{equation}
	\label{HiddenRegAss}
		\int_0^{T_0} \norm{\Bcal y(t)}_\U^2 \leq C_0 \norm{y^0}_\X^2.
\end{equation}

Note that, when $p = 0$, i.e. $\B \in \mathcal{L}(\X, \U)$, this property is straightforward since the energy of solutions $y$ of \eqref{FD-Abstract} is preserved. However, when $p>0$, this property is not at all granted and comes from subtle properties of the system under consideration, requiring  suitable assumptions on $\B$ and in particular an adequate interaction of $\B$ with the free dynamics generated by $\A$. The paradigmatic example of such situation concerns the wave equation in a bounded domain with homogenous Dirichlet boundary conditions observed through the flux on the boundary. In this case indeed, the operator $\Bcal$ is not bounded on $\X$ but is still admissible, see \cite{Lions}. This additional property then allows to define solutions in the sense of transposition for \eqref{ControlledSystem}, see e.g. \cite{TWbook}.

Let us also remark that by the semi-group property, it is straightforward to show that if the admissibility estimate \eqref{HiddenRegAss} is true for some $T_0$ with a constant $C_0$, it is true for all $T$ with constant $C(T) = C_0 (1+\lfloor T/T_0 \rfloor)$.

Finally, we point out that the estimate \eqref{HiddenRegAss} is the reverse of the observability estimate \eqref{FD-Obs}, and we may therefore expect that the strategy developed for getting uniform observability estimates for time-discrete approximations of \eqref{AbstractTimeDisc} also applies in the context of admissibility. This is indeed the case:

\begin{theorem}\label{MainFully5}
	Assume that $\Bcal \in \mathfrak{L}(\mathcal{D}(\A^p), \U)$ for some $p\in \N$ and $\U$ an Hilbert space, and that $\Bcal$ satisfies \eqref{C-p-boundedness} with constant $C_p$.
	
	Assume that equation \eqref{FD-Abstract} is admissible for $\Bcal$ at time $T_0$ with constant $C_0$, i.e. for all $y^0 \in \mathcal{D}(\A^p)$, the solution $y(t)$ of \eqref{FD-Abstract} with initial data $y^0$ satisfies \eqref{FD-Obs}.

	Let $f$ be a smooth function describing the time discrete operator $\T_\tau$ as in \eqref{TonEig}, assume \eqref{HypG-1}--\eqref{HypG-2}--\eqref{HypG-3}, and fix $\delta \in (0,R)$.
	
	Then, for all time $T>0$, there exists a constant $C(T)$ so that for all $\tau >0$ small enough, solutions $y_\tau$ of \eqref{AbstractTimeDisc} lying in $\mathfrak{C}(\delta/\tau)$ satisfy 
	\begin{equation}
		\label{HiddenRegDiscrete}
		\tau \sum_{k \tau \in (0,T)} \norm{\Bcal y^k}_\U^2 \leq C \norm{y^0}_\X^2.
	\end{equation}
\end{theorem}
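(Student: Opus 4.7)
The plan is to mirror the proof of Theorem \ref{Thm-Main}, this time invoking the reverse representation formula of Theorem \ref{DiscTransmutationReverse} in place of Theorem \ref{Thm-DiscTransmutation}. Given $y^0 \in \mathfrak{C}(\delta/\tau)$, the corresponding continuous solution $y(t)$ stays in this filtered subspace (hence in $\mathcal{D}(\A^p)$), so for a cut-off $\chi$ supported in $(-(\delta+\varepsilon), \delta+\varepsilon)$ with $\delta + \varepsilon < R$ and equal to $1$ on $(-\delta, \delta)$, Theorem \ref{DiscTransmutationReverse} yields $y_\tau^k = \int_\R q_\tau(t, k\tau) y(t)\, dt$, and since $\Bcal \in \mathfrak{L}(\mathcal{D}(\A^p), \U)$ commutes with the integral, also
$$\Bcal y_\tau^k = \int_\R q_\tau(t, k\tau) \Bcal y(t)\, dt.$$

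Next I would prove two estimates on $q_\tau$, strictly parallel to Proposition \ref{RhoDescription} and Proposition \ref{Prop-TransmutOp}. First, a non-stationary phase decay estimate: for $(t, s)$ such that $t + \varepsilon < s \inf_{|\alpha| \leq \delta+\varepsilon} f'(\alpha)$ or $s \sup_{|\alpha| \leq \delta+\varepsilon} f'(\alpha) < t - \varepsilon$, and for every $n \in \N$,
$$|q_\tau(t, s)| \leq \frac{C_{n,\varepsilon}\, \tau^{2n-1}}{\inf_{|\alpha| \leq \delta + \varepsilon} |f'(\alpha) s - t|^{2n}},$$
obtained by rescaling $\alpha = \mu\tau$ in \eqref{KernelDiscTransmutation-Rev} and iterating integrations by parts against $\partial_\alpha(f(\alpha)s - \alpha t)/\tau = (f'(\alpha) s - t)/\tau$. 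Second, the reverse transmutation operator $\mathcal{J}_\tau w(k\tau) := \int_\R q_\tau(t, k\tau) w(t)\, dt$ is bounded from $L^2(\R; H)$ to $L^2(\tau\Z; H)$ with norm at most $\|\chi\|_\infty/\sqrt{\inf_{|\alpha| \leq \delta + \varepsilon} f'(\alpha)}$. This follows from a direct computation showing that $\F_\tau[\mathcal{J}_\tau w](\nu) = g'(\nu\tau) \chi(g(\nu\tau)) \widehat{w}(g(\nu\tau)/\tau)$ on $(-\pi/\tau, \pi/\tau)$, followed by Parseval \eqref{ParsevalDis} and the change of variable $\mu\tau = g(\nu\tau)$, mirroring the proof of Proposition \ref{Prop-TransmutOp}.

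With these ingredients, set $T'_\varepsilon := T \sup_{|\alpha| \leq \delta + \varepsilon} f'(\alpha) + \varepsilon$ and split $\int_\R = \int_{-\infty}^{-\varepsilon} + \int_{-\varepsilon}^{T'_\varepsilon} + \int_{T'_\varepsilon}^{+\infty}$. The boundedness of $\mathcal{J}_\tau$ applied to $w(t) = \mathbf{1}_{(-\varepsilon, T'_\varepsilon)}(t) \Bcal y(t)$ controls the square sum of the middle piece by $C \int_{-\varepsilon}^{T'_\varepsilon} \|\Bcal y(t)\|_\U^2\, dt$, which by continuous admissibility (extended to arbitrary time-intervals via the semigroup property recalled after \eqref{HiddenRegAss}) and conservation of the $\X$-norm is bounded by $C(T, \varepsilon) \|y^0\|_\X^2$. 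For the two tails, using the decay bound on $q_\tau$ together with the crude estimate $\|\Bcal y(t)\|_\U \leq C_p \tau^{-p} \|y^0\|_\X$ (valid on $\mathfrak{C}(\delta/\tau)$ by \eqref{C-p-boundedness}) and the fact that $k\tau \inf f' - t \geq |t|$ for $t \leq -\varepsilon$ and $k\tau \geq 0$ (with the analogous estimate on the right tail), the integrals are dominated by $\tau^{2n-1-p}$ times integrable powers of $|t|^{-2n}$, so their square sum over $k\tau \in (0, T)$ contributes at most $C_n \tau^{4n-2-2p}\|y^0\|_\X^2$ — negligible as $\tau \to 0$ once $n > (p+1)/2$. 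The main technical work lies in the two auxiliary estimates on $q_\tau$, but these simply re-run the arguments already performed in Section \ref{Sec-Rho-tau} for $\rho_\tau$; conceptually, the key contrast with Theorem \ref{Thm-Main} is that since continuous admissibility holds on every time interval, no sharp lower bound on $T$ enters the discrete estimate.
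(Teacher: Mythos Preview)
Your proposal is correct and follows essentially the same approach as the paper: the paper's proof consists of invoking Theorem~\ref{DiscTransmutationReverse} together with the two auxiliary results you re-derive (stated in the paper as Propositions~\ref{QDescription} and~\ref{Prop-TransmutOp-Reverse}), and then running the argument of Theorem~\ref{Thm-Main} with the roles of continuous and discrete time exchanged. Your observation that no sharp lower bound on $T$ appears because continuous admissibility holds for all times is exactly the point, and your tail/middle decomposition and error accounting match the paper's scheme.
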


Similarly as in Theorem \ref{Thm-Main}, Theorem \ref{MainFully5} is derived by careful estimates on the kernel function $q_\tau$ in Theorem \ref{DiscTransmutationReverse}.

Note however that Theorem \ref{MainFully5} can be found in \cite[Theorem 6.5]{je3}. There, it is proved using an equivalent form of the admissibility property \eqref{HiddenRegAss} in terms of packets of eigenfunctions, in the spirit of \cite{RTTT} for the observability of waves (see also \cite{TWbook}). Though, we will provide a short proof of Theorem \ref{MainFully5} by using the kernel $q_\tau$ given by Theorem \ref{DiscTransmutationReverse} to show the flexibility and efficiency of our strategy. The proof of Theorem \ref{MainFully5} is postponed to the end of the section.

We first show that the kernel function $q_\tau$ given by Theorem \ref{DiscTransmutationReverse} is mainly localized in some parts of $\R^2$ when we choose the function $\chi$ compactly supported in $(-\delta - \varepsilon, \delta + \varepsilon)$, for $\varepsilon \in (0, R- \delta)$:

\begin{proposition}\label{QDescription}
	Let $f$ be a smooth function describing the time discrete operator $\T_\tau$ as in \eqref{TonEig}, assume \eqref{HypG-1}--\eqref{HypG-2}--\eqref{HypG-3}, and fix $\delta \in (0,R)$. Let $\varepsilon >0$ such that $\delta + \varepsilon <R$, and choose the function $\chi$ in Theorem \ref{DiscTransmutationReverse} supported in $(-\delta - \varepsilon, \delta + \varepsilon)$.
	
	Let $T>0$ and $(t,s) \in \R_+ \times (0,T)$ be such that 
	\begin{equation}
		\label{Cond(t,s)-Q}
		t > s \sup_{|\alpha| \leq \delta + \varepsilon} \{ f'(\alpha)\} + \varepsilon \quad \hbox{or} \quad 	t <  s \inf_{|\alpha| \leq \delta + \varepsilon} \{f'(\alpha)\} -\varepsilon.
	\end{equation}
	Then for all $n \in \N$, there exists a constant $C_{n,\varepsilon}$ independent of $(t,s) \in \R \times (0,T) $ such that for all $(t,s)$ satisfying \eqref{Cond(t,s)-Q},
	\begin{equation}
			\label{QSmall}
			|q_\tau(t,s) | \leq  \frac{C_{n,\varepsilon} \tau^{2n-1}}{\underset{|\alpha| \leq \delta + \varepsilon}\inf \{ |f'(\alpha) s- t| \}^{2n} },
	\end{equation}
	where $q_\tau$ is the kernel given in \eqref{KernelDiscTransmutation-Rev}.
\end{proposition}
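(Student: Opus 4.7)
The plan is to mimic the proof of Proposition \ref{RhoDescription} almost verbatim, since $q_\tau$ in \eqref{KernelDiscTransmutation-Rev} is again an oscillatory integral with a $C_c^\infty$ amplitude and large parameter $1/\tau$, and condition \eqref{Cond(t,s)-Q} is precisely the non-stationary phase condition for its phase. More explicitly, after the change of variable $\alpha = \mu\tau$, I would rewrite
\[
q_\tau(t,s) = \frac{1}{2\pi\tau}\int_{\R}\exp\!\left(\frac{i}{\tau}\,\varphi(\alpha)\right)\chi(\alpha)\, d\alpha, \qquad \varphi(\alpha) := f(\alpha)\,s - \alpha\, t,
\]
so that $\varphi'(\alpha) = f'(\alpha) s - t$. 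Hypothesis \eqref{Cond(t,s)-Q} then ensures that $|f'(\alpha) s - t| \geq \varepsilon$ for every $\alpha \in [-\delta - \varepsilon, \delta + \varepsilon] \supset \operatorname{supp}\chi$, which is the key ingredient allowing non-stationary phase arguments.

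Next, I would reproduce the algebraic manipulation used in the proof of Proposition \ref{RhoDescription}. A direct chain rule computation gives
\[
-\frac{d^2}{d\alpha^2}\exp\!\left(\frac{i\varphi(\alpha)}{\tau}\right) = \left(\frac{(f'(\alpha)s - t)^2}{\tau^2} - \frac{i\, f''(\alpha)\, s}{\tau}\right)\exp\!\left(\frac{i\varphi(\alpha)}{\tau}\right),
\]
so that, setting
\[
G_\tau(\alpha) := \frac{1}{(f'(\alpha) s - t)^2 - i\tau\, f''(\alpha)\, s},
\]
which is smooth on $[-\delta - \varepsilon, \delta + \varepsilon]$ because its denominator has real part bounded below by $\varepsilon^2$, one obtains $\exp(i\varphi/\tau) = -\tau^2 G_\tau(\alpha)\,\frac{d^2}{d\alpha^2}\exp(i\varphi/\tau)$. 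Substituting this identity into the integral, integrating by parts twice (no boundary terms, as $\chi$ has compact support), and iterating $n$ times then yields
\[
q_\tau(t,s) = \frac{(-1)^n\tau^{2n-1}}{2\pi}\int_\R \exp\!\left(\frac{i\varphi(\alpha)}{\tau}\right)\!\left(\frac{d^2}{d\alpha^2}(G_\tau\,\cdot\,)\right)^{\!n}\!\chi(\alpha)\, d\alpha.
\]

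It then remains to establish the uniform estimate
\[
\left|\left(\frac{d^2}{d\alpha^2}(G_\tau\,\cdot\,)\right)^{\!n}\!\chi(\alpha)\right| \leq \frac{C_{n,\varepsilon}}{\underset{|\alpha|\leq \delta + \varepsilon}{\inf}\{|f'(\alpha) s - t|\}^{2n}},
\]
valid for $\tau$ in any bounded interval, after which the compact support of $\chi$ immediately yields \eqref{QSmall}. Since $s \in (0,T)$, the coefficient $f''(\alpha)\,s$ and all higher $\alpha$-derivatives of $f(\alpha)s$ are uniformly bounded on $[-\delta-\varepsilon,\delta+\varepsilon]$, so $|G_\tau(\alpha)| \leq |f'(\alpha) s - t|^{-2}$ and a routine induction on $n$ shows that each $\alpha$-differentiation of $G_\tau$ produces extra inverse powers of $(f'(\alpha) s - t)$ accompanied by at worst a bounded factor of $\tau$, which is absorbed by the uniform lower bound $|f'(\alpha) s - t| \geq \varepsilon$. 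The main obstacle is precisely this bookkeeping: one must verify that the perturbation $-i\tau f''(\alpha) s$ in the denominator of $G_\tau$ does not accumulate through the iterations. This is exactly the same argument as in Proposition \ref{RhoDescription}, so no new ideas are required; only a careful induction to carry through the iteration of the differential operator $\frac{d^2}{d\alpha^2}(G_\tau\,\cdot\,)$.
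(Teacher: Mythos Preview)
Your proposal is correct and follows essentially the same approach as the paper's own proof, which explicitly says it ``follows line to line the one of Proposition \ref{RhoDescription}'' after recording the analogous identity for $F_\tau$. In fact your computation of $G_\tau$ with the term $-i\tau f''(\alpha)\,s$ in the denominator is the right one (the paper writes $-i\tau f''(\alpha)\,t$, which is a typo since $\varphi''(\alpha)=f''(\alpha)s$); your observation that $s\in(0,T)$ keeps this term bounded is exactly the point needed to make the iteration go through.
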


\begin{proof}
	Again, we only sketch the proof which can be done following the one of Proposition \ref{RhoDescription}.
	
	By a change of variable, similarly as in \eqref{KernelDiscTransmutation-2}, rewrite $q_\tau$ as 
	$$
		q_\tau(t,s) = \frac{1}{2\pi \tau} \int_{\R} \exp\left(\frac{i}{\tau}\left(f(\alpha) s - \alpha t \right) \right) \chi(\alpha) \, d\alpha.
	$$
	As in \eqref{StationaryTau}, we then remark that, for $(t, s) \in \R_+ \times (0,T)$ satisfying \eqref{Cond(t,s)-Q} and $|\alpha| \leq (\delta+ \epsilon)/\tau$,
	\begin{multline*}
	\ds \exp\left(\frac{i}{\tau} \left(f(\alpha) s -\alpha t \right)\right) = 	- \tau^2 F_\tau(\alpha)\frac{d^2}{d\alpha^2}\left( \exp\left(\frac{i}{\tau} \left(f(\alpha) s - \alpha t  \right)\right) \right)
	\\
	\hbox{with } \ds F_\tau (\alpha) = \frac{1}{(f'(\alpha) s- t)^2 - i \tau f''(\alpha)t}.
	\end{multline*}
	The rest of the proof follows line to line the one of Proposition \ref{RhoDescription}.
\end{proof}

Proposition \ref{Prop-TransmutOp} also has a counterpart:

\begin{proposition}\label{Prop-TransmutOp-Reverse}
	Let $f$ be a smooth function describing the time discrete operator $\T_\tau$ as in \eqref{TonEig}, assume \eqref{HypG-1}--\eqref{HypG-2}--\eqref{HypG-3}, and fix $\delta \in (0,R)$. Let $\varepsilon >0$ such that $\delta + \varepsilon <R$, and choose the function $\chi$ in Theorem \ref{DiscTransmutationReverse} supported in $(-\delta - \varepsilon, \delta + \varepsilon)$.

	For $\tau>0$, set $\mathcal{J}_\tau$ the transformation defined for functions $w(t)$ compactly supported on $\R$ with values in some Hilbert space $H$ by
	\begin{equation}
		\label{Itau-Q}
			\mathcal{J}_\tau (w) (k \tau) = \int_\R q_\tau(t,k \tau) w(t) \, dt,
	\end{equation}
	where $q_\tau$ is the kernel given in \eqref{KernelDiscTransmutation-Rev}.
	
	Then $\mathcal{J}_\tau$ is a bounded operator from $L^2(\R; H)$ to $L^2(\tau \Z; H)$:
	\begin{equation}
		\label{JtauBounded-Q}
		\norm{\mathcal{J}_\tau}_{\mathcal{L}(L^2(\R; H); L^2(\tau \Z; H))} \leq \frac{\norm{\chi}_{\infty}}  {\sqrt{ \underset{|\alpha| < \delta + \varepsilon}\inf f'(\alpha)}}.
	\end{equation}
\end{proposition}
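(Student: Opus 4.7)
The plan is to mirror the proof of Proposition \ref{Prop-TransmutOp} but in reverse: express $\mathcal{J}_\tau(w)$ evaluated on the grid $\tau\Z$ as a discrete inverse Fourier transform at scale $\tau$ of a function involving the continuous Fourier transform $\widehat{w}$, then apply the discrete Parseval identity \eqref{ParsevalDis} and change variables to recognize a continuous Plancherel.

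First, I would insert the definition \eqref{KernelDiscTransmutation-Rev} of $q_\tau$ into \eqref{Itau-Q} and apply Fubini (everything being compactly supported by assumption) to obtain
\begin{equation*}
    \mathcal{J}_\tau(w)(k\tau) = \frac{1}{2\pi} \int_\R \chi(\mu\tau)\, \widehat{w}(\mu)\, e^{i f(\mu\tau)\, k}\, d\mu,
\end{equation*}
where $\widehat{w}(\mu) = \int_\R w(t) e^{-i\mu t}\, dt$. Since $\chi$ is supported in $(-\delta-\varepsilon,\delta+\varepsilon)$ and assumption \eqref{HypG-3} ensures that $f$ is an increasing bijection from $(-\delta-\varepsilon,\delta+\varepsilon)$ onto $(-f(\delta+\varepsilon),f(\delta+\varepsilon)) \subset (-\pi,\pi)$, the change of variable $\mu_\tau \tau = f(\mu\tau)$, equivalently $\mu\tau = g(\mu_\tau\tau)$, with $d\mu = g'(\mu_\tau\tau)\, d\mu_\tau$, rewrites this as
\begin{equation*}
    \mathcal{J}_\tau(w)(k\tau) = \mathcal{F}_\tau^{-1}[\Psi_\tau](k\tau),
\end{equation*}
where, for $\mu_\tau \in (-\pi/\tau,\pi/\tau)$,
\begin{equation*}
    \Psi_\tau(\mu_\tau) = \chi(g(\mu_\tau\tau))\, g'(\mu_\tau\tau)\, \widehat{w}\!\left(\tfrac{g(\mu_\tau\tau)}{\tau}\right)\mathbf{1}_{|\mu_\tau\tau|\leq f(\delta+\varepsilon)}.
\end{equation*}

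Next, I would apply Parseval \eqref{ParsevalDis}, then undo the change of variables (using the identity $g'(\mu_\tau\tau) = 1/f'(\mu\tau)$, so that $|g'(\mu_\tau\tau)|^2 d\mu_\tau = d\mu / f'(\mu\tau)$), to get
\begin{equation*}
    \norm{\mathcal{J}_\tau(w)}_{L^2(\tau\Z;H)}^2 = \frac{1}{2\pi} \int_{-(\delta+\varepsilon)/\tau}^{(\delta+\varepsilon)/\tau} |\chi(\mu\tau)|^2\, \norm{\widehat{w}(\mu)}_H^2\, \frac{d\mu}{f'(\mu\tau)}.
\end{equation*}
Bounding $|\chi(\mu\tau)|^2/f'(\mu\tau)$ pointwise by $\norm{\chi}_\infty^2 / \inf_{|\alpha|\leq\delta+\varepsilon} f'(\alpha)$, extending the integration to all of $\R$, and using the continuous Plancherel identity $\frac{1}{2\pi}\int_\R \norm{\widehat{w}(\mu)}_H^2\, d\mu = \norm{w}_{L^2(\R;H)}^2$ yields \eqref{JtauBounded-Q}. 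The argument concludes by a density argument since $\mathcal{J}_\tau$ is defined initially for compactly supported $w$.

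There is no substantive obstacle: the one piece requiring care is the Jacobian bookkeeping in the change of variable $\mu_\tau\tau = f(\mu\tau)$, together with the verification that, due to \eqref{HypG-2} and the support of $\chi$, the image interval stays within $(-\pi/\tau,\pi/\tau)$, which is exactly what allows us to read the expression as $\mathcal{F}_\tau^{-1}[\Psi_\tau]$ and invoke \eqref{ParsevalDis}.
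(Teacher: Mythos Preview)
Your proof is correct and follows the route the paper intends: the paper simply says the proof is ``similar to the one of Proposition \ref{Prop-TransmutOp}'' and leaves details to the reader, and your Fourier/Parseval computation with the change of variable $\mu_\tau\tau=f(\mu\tau)$ is precisely the mirror argument. The only cosmetic difference is that the paper's proof of Proposition \ref{Prop-TransmutOp} proceeds by duality (pairing against a test function and bounding the bilinear form), whereas you compute $\norm{\mathcal{J}_\tau(w)}_{L^2(\tau\Z;H)}^2$ directly via \eqref{ParsevalDis}; in a Hilbert setting these are equivalent, and your direct version is arguably cleaner here since it makes the Jacobian identity $|g'(\mu_\tau\tau)|^2\,d\mu_\tau = d\mu/f'(\mu\tau)$ explicit.
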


Again, the proof is similar to the one of Proposition \ref{Prop-TransmutOp}. Details are left to the reader.

\begin{proof}[Proof of Theorem \ref{MainFully5}]
Theorem \ref{MainFully5} can be derived similarly as Theorem \ref{Thm-Main} by applying Theorem \ref{DiscTransmutationReverse}, Proposition \ref{QDescription} and Proposition \ref{Prop-TransmutOp-Reverse}.
\end{proof}

\subsection{Optimality of the time-estimate \eqref{TimeConditionTau}}\label{SectionOptimality}

Our goal here is to prove that the time-estimate \eqref{TimeConditionTau} is sharp. 

Let us begin with the following result:
\begin{theorem}\label{MainFully6}
	Let $f$ be a smooth function describing the time discrete operator $\T_\tau$ as in \eqref{TonEig}, assume \eqref{HypG-1}--\eqref{HypG-2}--\eqref{HypG-3}, and fix $\delta \in (0,R)$.
	
	Assume  that there exist $p \geq 0$ and a constant $C_{p}>0$ such that \eqref{C-p-boundedness} holds.

	Also assume that there exist a time $T_1$ and a constant $C$ such that for any $\tau >0$, solutions $y_\tau$ of \eqref{AbstractTimeDisc} lying in $\mathfrak{C}(\delta/\tau)$ satisfy \eqref{Fully-Obs}.	
	
	Then, for any $0 < \delta_1< \delta_2 < \delta$, for any time $T$ satisfying
	\begin{equation}
		\label{ReverseTime}
		T > T_1 \sup_{\alpha \in (\delta_1, \delta_2)} \{f'(\alpha) \}, 
	\end{equation}
	there exist positive constants $C$ and $\tau_0>0$ such that, for all $\tau\in (0,\tau_0)$, all solutions $y$ of \eqref{FD-Abstract} with initial data $y^0 \in \mathfrak{C}(\delta_2/\tau) \cap \mathfrak{C}(\delta_1/\tau)^\perp$ satisfy \eqref{FD-Obs}.
\end{theorem}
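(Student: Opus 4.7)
The proof mirrors that of Theorem~\ref{Thm-Main}, exchanging the roles of the continuous and time-discrete models through the reverse transmutation formula of Theorem~\ref{DiscTransmutationReverse-2}. Fix $y^0 \in \mathfrak{C}(\delta_2/\tau) \cap \mathfrak{C}(\delta_1/\tau)^\perp$ with corresponding continuous solution $y$. Choose $\varepsilon > 0$ small enough that $\delta_1 - \varepsilon > 0$, $\delta_2 + \varepsilon < \delta$, and $T > T_1 \sup_{\alpha \in [\delta_1-\varepsilon, \delta_2+\varepsilon]}\{f'(\alpha)\} + 3\varepsilon$, and pick a smooth real-valued $\chi$ equal to $1$ on $[-\delta_2, -\delta_1] \cup [\delta_1, \delta_2]$ and compactly supported in $(-\delta_2-\varepsilon, -\delta_1+\varepsilon) \cup (\delta_1-\varepsilon, \delta_2+\varepsilon)$. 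Theorem~\ref{DiscTransmutationReverse-2} then yields a solution $y_\tau^k = \int_\R q_\tau(t, k\tau)\, y(t)\, dt$ of \eqref{AbstractTimeDisc} with initial datum $y^0 \in \mathfrak{C}(\delta/\tau)$, so the assumed uniform discrete observability gives
\begin{equation*}
	\norm{y^0}_\X^2 \leq C\, \tau \sum_{k\tau \in (0,T_1)} \Big\| \int_\R q_\tau(t, k\tau)\, \Bcal y(t)\, dt \Big\|_\U^2.
\end{equation*}

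The next step is the annular analog of Proposition~\ref{QDescription}. Setting $\alpha = \mu\tau$ in \eqref{KernelDiscTransmutation-Rev}, the phase $\alpha \mapsto (f(\alpha) k\tau - \alpha t)/\tau$ has derivative $(f'(\alpha)k\tau - t)/\tau$; for $k\tau > 0$ this vanishes only at $t = f'(\alpha) k\tau$ with $\alpha \in \mathrm{supp}\,\chi$. Since $f$ is odd, $f'$ is even and strictly positive on $(-\delta, \delta)$ by \eqref{HypG-3}, so the stationary region in $t$ lies entirely in $k\tau \cdot [m_\varepsilon, M_\varepsilon] \subset (0,\infty)$, where $m_\varepsilon = \inf_{[\delta_1-\varepsilon, \delta_2+\varepsilon]} f'$ and $M_\varepsilon = \sup_{[\delta_1-\varepsilon, \delta_2+\varepsilon]} f'$. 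Repeated integration by parts as in the proof of Proposition~\ref{QDescription} then yields, for any $n \in \N$, the estimate $|q_\tau(t, k\tau)| \leq C_n \tau^{2n-1} / \inf_\alpha |f'(\alpha)k\tau - t|^{2n}$ whenever $(t, k\tau)$ lies outside an $\varepsilon$-tubular neighborhood of the stationary set. In particular the kernel is polynomially small in $\tau$ for every $t \notin [-\varepsilon, T_1 M_\varepsilon + \varepsilon]$, uniformly in $k\tau \in (0, T_1)$.

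For each $k\tau \in (0, T_1)$ I would split $\int_\R = \int_{J} + \int_{\R \setminus J}$ with $J = (-\varepsilon, T-\varepsilon)$, chosen so that the stationary tubular neighborhood is contained in $J$ thanks to $T - 2\varepsilon > T_1 M_\varepsilon$. The main part is bounded by the annular analog of Proposition~\ref{Prop-TransmutOp-Reverse}, whose proof carries over verbatim since the Plancherel-change-of-variable argument only uses the invertibility of $f$ on the support of $\chi$ guaranteed by \eqref{HypG-3}, yielding
\begin{equation*}
	\tau \sum_{k\tau \in (0,T_1)} \Big\| \int_{J} q_\tau(t, k\tau) \Bcal y(t)\, dt \Big\|_\U^2 \leq C_\varepsilon \int_{-\varepsilon}^{T-\varepsilon} \norm{\Bcal y(t)}_\U^2\, dt.
\end{equation*}
The remainder is controlled using $\|\Bcal y(t)\|_\U \leq C_{p,\delta} \tau^{-p} \|y^0\|_\X$, which follows from \eqref{C-p-boundedness}, the conservation of the $\X$-norm, and the spectral bound $\|\A^p y\|_\X \leq (\delta/\tau)^p \|y\|_\X$ on $\mathfrak{C}(\delta/\tau)$; combined with the polynomial decay of $q_\tau$ and the convergence of $\int u^{-2n}\,du$ for $n \geq 1$, this gives $\tau \sum_{k\tau \in (0,T_1)} \|R_\tau^k\|_\U^2 \leq C_n \tau^{4n-2-2p} \|y^0\|_\X^2$.

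Choosing $n > 1 + p/2$ and then $\tau$ small enough, the remainder is absorbed into the left-hand side, giving $\norm{y^0}_\X^2 \leq C \int_{-\varepsilon}^{T-\varepsilon}\norm{\Bcal y(t)}_\U^2\, dt$. Applying this inequality to the translated solution $\tilde y(t) = y(t+\varepsilon)$ (still a solution of \eqref{FD-Abstract} with $\tilde y(0) \in \mathfrak{C}(\delta_2/\tau) \cap \mathfrak{C}(\delta_1/\tau)^\perp$ and $\|\tilde y(0)\|_\X = \|y^0\|_\X$ by conservation) yields \eqref{FD-Obs} on $(0,T)$. The principal obstacles are (i) adapting Propositions~\ref{QDescription} and~\ref{Prop-TransmutOp-Reverse} to the annular support of $\chi$, which is essentially bookkeeping since the key steps in both proofs only require $\chi$ to be compactly supported where $g$ is defined and $f'$ is uniformly positive; and (ii) tracking $\varepsilon$-margins carefully. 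The time threshold $T_1 \sup_{(\delta_1,\delta_2)} f'$ is dictated sharply by the fact that the discrete sum $k\tau \in (0, T_1)$ combined with the annular localization in $\alpha$ concentrates the relevant continuous times in $\bigcup_{s \in (0, T_1)} s \cdot [m_\varepsilon, M_\varepsilon]$, so any $T$ strictly above $T_1 \sup_{(\delta_1,\delta_2)} f'$ works by taking $\varepsilon$ small.
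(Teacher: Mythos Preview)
Your proof is correct and follows essentially the same route as the paper: apply the reverse representation formula of Theorem~\ref{DiscTransmutationReverse-2} with a cut-off $\chi$ supported on the annulus $(-\delta_2-\varepsilon,-\delta_1+\varepsilon)\cup(\delta_1-\varepsilon,\delta_2+\varepsilon)$, establish the corresponding localization estimate for $q_\tau$ and the $L^2$-boundedness of $\mathcal{J}_\tau$ on this support, split the $t$-integral into a principal part and a polynomially small remainder, and absorb the latter for $\tau$ small. The only cosmetic difference is that the paper integrates over $(-\varepsilon,\,T_1\sup f'+\varepsilon)$ and then lets $\varepsilon\to 0$, whereas you integrate over $(-\varepsilon,T-\varepsilon)$ and shift in time; both arrive at the same conclusion.
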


\begin{proof}
	Let $0 < \delta_1 < \delta_2 < \delta$, consider a smooth even function $\chi$ compactly supported on $(- \delta_2 - \varepsilon, - \delta_1 + \varepsilon) \cup (\delta_1 - \varepsilon, \delta_2 + \varepsilon)$ for some $\varepsilon \in (0, \delta - \delta_2)$, and set $q_\tau$ as in Theorem \ref{DiscTransmutationReverse-2}.
	
	Then, similarly as in Proposition \ref{QDescription}, one can prove that for $(t,s) \in \R \times (0,T_1)$ such that 
		\begin{equation}
			\label{Cond(t,s)-Q-2}
			t >s \sup_{\delta_1 - \varepsilon \leq |\alpha| \leq \delta_2 + \varepsilon} \{ f'(\alpha)\} + \varepsilon \quad \hbox{or} \quad 	t < s \inf_{\delta_1 - \varepsilon \leq |\alpha| \leq \delta_2 + \varepsilon} \{f'(\alpha)\} -\varepsilon,
		\end{equation}
		for all $n \in \N$, there exists a constant $C_{n}>0$ such that 
		\begin{equation}
				\label{QSmall-2}
				|q_\tau(t,s) | \leq  \frac{C_{n} \tau^{2n-1}}{\underset{\delta_1 - \varepsilon \leq |\alpha| \leq \delta_2 + \varepsilon} \inf \{ |f'(\alpha) s- t| \}^{2n} }.
		\end{equation}
	
		Similarly, Proposition \ref{Prop-TransmutOp-Reverse} still holds.
		
		Hence, following the proof of Theorem \ref{Thm-Main} and in particular of estimate \eqref{RHS-Transmutation}, we obtain that
		\begin{multline}
			\label{Eq-RHS-Transmutation-2}
			\tau \sum_{k \tau \in (0,T_1)} \norm{\Bcal y_\tau^{k}}_\U^2 \leq C \int_{-\varepsilon}^{T_1\sup_{\delta_1 - \varepsilon \leq |\alpha| \leq \delta_2 + \varepsilon} \{ f'(\alpha)\}+ \varepsilon} \norm{\Bcal y(t)}_\U^2 \, dt 
			\\+ C_{n} \tau^{4n-2-2p} \norm{y^0}_\X^2.
		\end{multline}
		Applying \eqref{Fully-Obs} and taking $\tau >0$ small enough, \eqref{FD-Obs} follows with $T = T_1 \sup_{\delta_1- \varepsilon\leq |\alpha| \leq \delta_2 + \varepsilon} f'(\alpha) + 2 \varepsilon$. 
		
		Since $\varepsilon$ can be chosen arbitrary small, the observability estimate \eqref{FD-Obs} holds for any solution of \eqref{FD-Abstract} with initial data in $\mathfrak{C}(\delta_2/\tau) \cap \mathfrak{C}(\delta_1/\tau)^\perp$ and time $T$ as in \eqref{ReverseTime}.
\end{proof}

We are now in position to prove that the estimate \eqref{TimeConditionTau} is sharp. In order to show this, we specify the abstract system \eqref{FD-Abstract} to the simplest case fitting the assumptions, namely the transport equation at velocity $1$ on the $1$-d circle denoted by $\mathbb{S}$ and identified with the interval $(0,1)$ with periodic boundary conditions. In this case, the equation reads:
\begin{equation}
	\label{TransportTorus}
	\partial_t y + \partial_x y = 0, \quad (t,x) \in \R \times \mathbb{S}.
\end{equation}
This correspond to an operator $\A = - \partial_x $, defined on $\X = L^2_{\#}(\mathbb{S})$, the space of periodic functions of period one in $L^2(0,1)$, with domain $\mathcal{D}(\A) = H^1_{\#}(\mathbb{S})$.

We consider the observation operator $\B y = y(0)$, which is continuous on $\mathcal{D}(\A) = H^1_{\#}(\mathbb{S})$ and takes value in $\U = \R$ .  

Since the solutions of the transport equations \eqref{TransportTorus} can easily be solved with characteristics, we get $y(t,x) = y^0(x-t)$. It is thus completely straightforward to show that
\begin{equation}
	\label{Obs-Torus}
	\int_0^1 |y^0(x)|^2 \, dx =  \int_0^1 |y(t,0)|^2 \, dt.
\end{equation}

In particular, applying Theorem \ref{Thm-Main} for some discretization scheme corresponding to $f$ satisfying \eqref{HypG-1}--\eqref{HypG-2}--\eqref{HypG-3}, taking $\delta <R$, for all $T$ satisfying
\begin{equation}
	\label{Time-Condition-Torus}
	T > \frac{1}{\underset{|\alpha| < \delta}\inf \{f' (\alpha)\} },
\end{equation}
there exist a constant $C$ and $\tau_0>0$ such that for all $\tau \in (0, \tau_0)$ and $y_\tau$ solution of
\begin{multline}
	\label{Torus-Discrete}
	y_\tau^{k+1} = \exp( i f(-i \A \tau)) y_\tau^k, \quad k \in \Z, \qquad y_\tau^0 = y^0, 
	\\ \hbox{ with } \A = - \partial_x, \quad \mathcal{D}(\A) = H^1_{\#}(\mathbb{S}), \quad \X = L^2_{\#}(\mathbb{S}),
\end{multline}
 with initial data $y^0 \in \mathfrak{C}(\delta/\tau)$, 
\begin{equation}
	\label{Obs-Discrete-Torus}
	\norm{y^0}_{L^2_\#(\mathbb{S})}^2 \leq C \tau \sum_{ k \tau \in (0,T)} |y_\tau^{k}(0)|^2.
\end{equation}

To prove the sharpness of the time estimate \eqref{Time-Condition-Torus}, we are thus going to show the following:

\begin{theorem}
	\label{Thm-Sharpness-Torus}
	Assume that $f$ satisfies \eqref{HypG-1}--\eqref{HypG-2}--\eqref{HypG-3} and take $\delta <R$.
	
	There is no time $T>0$ satisfying 
	\begin{equation}
		\label{T-too-small}
		T < \frac{1}{\underset{|\alpha| < \delta}\inf \{f' (\alpha)\} }
	\end{equation}
	and constants $C>0$ and $\tau_0>0$ such that for all $\tau \in (0,\tau_0)$, solutions $y_{\tau}$ of \eqref{Torus-Discrete} with initial data $y^0 \in \mathfrak{C}(\delta/\tau)$ satisfy \eqref{Obs-Discrete-Torus}.
\end{theorem}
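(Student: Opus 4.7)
The plan is a proof by contradiction via Theorem \ref{MainFully6}. Suppose there exist $T_1$ satisfying \eqref{T-too-small}, $C>0$ and $\tau_0>0$ such that \eqref{Obs-Discrete-Torus} holds at time $T_1$ for every $\tau\in(0,\tau_0)$ and every $y^0\in\mathfrak{C}(\delta/\tau)$. The goal is to feed this assumption into Theorem \ref{MainFully6} to obtain a continuous observability inequality at some time $T<1$ for filtered high-frequency data, and then to invalidate it by exhibiting an explicit wave packet counterexample.

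First, the choice of the filter. Since $f$ is odd and smooth, $f'$ is even and continuous on $[-\delta,\delta]$ with $f'(0)=1$. Let $\alpha^*\in[0,\delta]$ be a point where $f'$ attains its infimum. Treating the three cases $\alpha^*=0$, $\alpha^*\in(0,\delta)$, $\alpha^*=\delta$ separately and using continuity of $f'$, one produces $0<\delta_1<\delta_2<\delta$ with $T_1\sup_{\alpha\in(\delta_1,\delta_2)}\{f'(\alpha)\}<1$. Fix any $T$ with $T_1\sup_{\alpha\in(\delta_1,\delta_2)}\{f'(\alpha)\}<T<1$. By Theorem \ref{MainFully6}, there exist $C'>0$ and $\tau_1>0$ such that for every $\tau\in(0,\tau_1)$ and every $z^0\in\mathfrak{C}(\delta_2/\tau)\cap\mathfrak{C}(\delta_1/\tau)^\perp$, the solution $z$ of \eqref{TransportTorus} with initial data $z^0$ satisfies $\norm{z^0}_{L^2_\#(\mathbb{S})}^2\leq C'\int_0^T|z(t,0)|^2\,dt$.

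Next, I build the wave packet. Fix a nonzero $\phi\in C_c^\infty((0,1-T))$, and for each small $\tau>0$ pick an integer $K_\tau\in(\delta_1/(2\pi\tau),\delta_2/(2\pi\tau))$ such that both $K_\tau-\delta_1/(2\pi\tau)$ and $\delta_2/(2\pi\tau)-K_\tau$ are of order $1/\tau$ (for instance, the nearest integer to $(\delta_1+\delta_2)/(4\pi\tau)$). Set $w_\tau^0(x)=\phi(x)e^{2\pi i K_\tau x}$, whose $k$-th Fourier coefficient equals $\hat\phi(k-K_\tau)$, and let $y_\tau^0$ be the spectral projection of $w_\tau^0$ onto $\mathfrak{C}(\delta_2/\tau)\cap\mathfrak{C}(\delta_1/\tau)^\perp$. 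All Fourier modes omitted by this projection lie at distance $\gtrsim 1/\tau$ from $K_\tau$; since $\phi$ is smooth with compact support, its Fourier coefficients are rapidly decreasing, and consequently $\norm{y_\tau^0-w_\tau^0}_{H^s(\mathbb{S})}=O(\tau^N)$ for every $s\geq 0$ and every $N\geq 0$. In particular $\norm{y_\tau^0}_{L^2}^2\to\norm{\phi}_{L^2}^2>0$ as $\tau\to 0$.

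Finally, applying the continuous observability inequality to $y_\tau^0\in\mathfrak{C}(\delta_2/\tau)\cap\mathfrak{C}(\delta_1/\tau)^\perp$ and using $y_\tau(t,x)=y_\tau^0(x-t)$ together with $1$-periodicity yields
\begin{equation*}
\int_0^T|y_\tau(t,0)|^2\,dt=\int_{1-T}^1|y_\tau^0(s)|^2\,ds.
\end{equation*}
Because $\phi$ vanishes on $(1-T,1)$ and $y_\tau^0\to w_\tau^0$ in $L^2(\mathbb{S})$, this last integral tends to $\int_{1-T}^1|\phi(s)|^2\,ds=0$, while the left-hand side $\norm{y_\tau^0}_{L^2}^2$ of the observability estimate is bounded below by $\norm{\phi}_{L^2}^2/2$ for $\tau$ small enough. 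This contradiction proves Theorem \ref{Thm-Sharpness-Torus}. The most delicate point is the wave packet construction: the integer $K_\tau$ must be kept far from both endpoints of the admissible band $(\delta_1/(2\pi\tau),\delta_2/(2\pi\tau)]$ uniformly as $\tau\to 0$, which is precisely what guarantees that the spectral projection preserves the support properties of $\phi$ up to an $O(\tau^N)$ error.
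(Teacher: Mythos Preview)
Your proof is correct and follows the same overall architecture as the paper: argue by contradiction, invoke Theorem \ref{MainFully6} to produce a continuous observability inequality at some time $T<1$ for data in a high-frequency band $\mathfrak{C}(\delta_2/\tau)\cap\mathfrak{C}(\delta_1/\tau)^\perp$, rewrite the observed quantity as $\int_{1-T}^{1}|y^0|^2$, and then contradict this with a wave packet localized in $(0,1-T)$.

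The genuine difference lies in the wave packet construction. The paper builds the packet directly in frequency space,
\[
y_\tau^0(x)=\sum_{j}\tau^{1/4}\chi\!\left(\sqrt{\tau}\,2j\pi-\frac{\delta}{\sqrt\tau}\right)e^{2ij\pi(x-x_0)},
\]
and then proves spatial localization around $x_0$ by a discrete integration-by-parts (discrete Laplacian) argument. You instead start in physical space with $w_\tau^0(x)=\phi(x)e^{2\pi iK_\tau x}$ for $\phi\in C_c^\infty((0,1-T))$, project onto the admissible band, and control the projection error using the rapid decay of $\hat\phi$. Your approach is slightly more elementary---it replaces the oscillatory-sum estimate by the standard fact that smooth compactly supported functions have rapidly decaying Fourier coefficients---while the paper's construction is more self-contained in that it never leaves the filtered class. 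Both yield the same conclusion with comparable effort; the only point deserving a word of care in your version is that the projection also retains the negative-frequency part of the band $\{-\delta_2/(2\pi\tau)<j<-\delta_1/(2\pi\tau)\}$, but these modes are at distance $\gtrsim 1/\tau$ from $K_\tau$ as well, so they contribute $O(\tau^N)$ and cause no trouble.
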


\begin{proof}
	We argue by contradiction and assume that there exist a time $T$ satisfying \eqref{T-too-small} and positive constants $C>0$ and $\tau_0>0$ such that for all $\tau \in (0,\tau_0)$, solutions $y_{\tau}$ of \eqref{Torus-Discrete} with initial data $y^0\in \mathfrak{C}(\delta/\tau)$ satisfy \eqref{Obs-Discrete-Torus}.

	Since $T$ satisfies \eqref{T-too-small}, we can find $[\delta_1, \delta_2] \subset (0, \delta)$ such that 
	$$
		T < \frac{1}{\underset{\alpha \in [\delta_1,\delta_2]} \sup f'(\alpha)}.
	$$
	According to Theorem \ref{MainFully6}, choosing
	$$
		\tilde T \in \left( \sup_{\alpha \in [\delta_1, \delta_2]} f'(\alpha), 1 \right),
	$$
	we get the existence of a constant $C$ such that for all $\tau >0$ small enough, all solutions $y$ of \eqref{TransportTorus} with initial data $y^0 \in \mathfrak{C}(\delta_2/\tau)\cap \mathfrak{C}(\delta_1/\tau)^\perp$ satisfy
	\begin{equation}
		\label{Obs-Torus-Ttilde}
			\int_0^1 |y^0(x)|^2 \, dx \leq C \int_0^{\tilde T} |y(t,0)|^2\, dt.
	\end{equation}
	Now, we show that this cannot be true for $\tilde T <1$. In order to do this, let us remark that the spectrum of the operator $\A = -\partial_x$ with domain $\mathcal{D}(\A) = H^1_{\#}(\mathbb{S})$ on $\X = L^2_{\#}(\mathbb{S})$ simply is given by the Fourier basis $(\Phi_j (x) = \exp(2 i j \pi x))_{j \in \Z}$ and corresponds to the eigenvalues $(i \mu_j = 2 i j \pi)_{j \in \Z}$.
	
	Besides, using that solutions of \eqref{TransportTorus} are simply given by $y(t,x) = y^0(x-t)$,
	$$
		\int_0^{\tilde T} |y(t,0)|^2\, dt = \int_{1- \tilde T}^1 |y^0(x)|^2 \, dx,
	$$
	so that \eqref{Obs-Torus-Ttilde} can be rewritten as 
	\begin{equation}
		\label{Obs-Torus-Ttilde-bis}
			\int_0^1 |y^0(x)|^2 \, dx \leq C  \int_{1- \tilde T}^1 |y^0(x)|^2 \, dx.
	\end{equation}
	We thus have to prove that \eqref{Obs-Torus-Ttilde-bis} cannot be true uniformly with respect to $\tau >0$ for $y^0$ lying in $\mathfrak{C}(\delta_2/\tau)\cap \mathfrak{C}(\delta_1/\tau)^\perp$. 
	
	This can be proved by an explicit construction as follows. We choose $\delta_0 \in (\delta_1, \delta_2)$,  a smooth compactly supported function $\chi$ with support in $(-1,1)$ and with unit $L^2(\R)$-norm, and $x_0 \in \mathbb{S}$ such that $x_0 \in (0, 1 - \tilde T)$.
	
	We then set, for $x_0 \in \mathbb{S}$ to be chosen later on, 
	\begin{equation}
		\label{DisprovingY0}
		y_\tau^0(x) = \sum_{j \in \Z} \tau^{1/4} \chi\left( \sqrt{\tau} 2j \pi  - \frac{\delta}{\sqrt{\tau}} \right) \exp( 2 i j \pi (x-x_0)).
	\end{equation}
	First, let us note that the coefficients of $y_\tau^0$ in the basis $\Phi_j (x) = \exp(2i j \pi x)$ vanish for 
	\begin{equation}
		\label{Numbers-Of-Terms}
		\left| 2j \pi -\frac{\delta}{\tau}\right| \geq \frac{1}{\sqrt{\tau}}.
	\end{equation}
	This implies in particular that for all $\tau>0$, the sum in \eqref{DisprovingY0} is finite and thus makes sense, and that for $\tau >0$ small enough, $y_\tau^0$ indeed belongs to $\mathfrak{C}(\delta_2/\tau)\cap \mathfrak{C}(\delta_1/\tau)^\perp$. 

	We can also compute the $L^2_\#(\mathbb{S})$-norm of $y_\tau^0$ by Parseval's formula:
	\begin{multline}
		\label{NormL2-Y0}
		\int_0^1 |y^0_\tau(x)|^2 \, dx = \sqrt{\tau}  \sum_{j \in \Z} \left|
			\chi\left(\sqrt{\tau} 2 j \pi - \frac{\delta}{\sqrt\tau}\right)\right|^2 
			\\
			\underset{\tau \to 0}{\longrightarrow} 
			\frac{1}{2 \pi}\int_\R |\chi(\alpha)|^2 \, d\alpha = \frac{1}{2\pi},
	\end{multline}
	since the sum is a Riemann sum.
	
	We now claim that $y_\tau^0$ is concentrated around $x_0$. In order to show this, for a function $v =v (j)$ defined for $j \in \Z$, we introduce the discrete Laplacian
	$$
		\Delta_d v(j) = v(j+1) +v(j-1) - 2 v(j),
	$$
	and remark that
	$$
		- \Delta_d  \exp( 2 i j \pi (x-x_0)) = \exp( 2 i j \pi (x-x_0)) 4 \sin^2(\pi (x-x_0)).
	$$
	Thus, for $\varepsilon >0$ and $|x-x_0| \in (\varepsilon, 1- \varepsilon)$,
	\begin{align*}
		y_\tau^0(x) & = \sum_{j \in \Z} \tau^{1/4} \chi\left( \sqrt{\tau} 2j \pi  - \frac{\delta}{\sqrt\tau} \right) e^{2 i j \pi (x-x_0)}
		\\
		& = \frac{1}{ (4 \sin^2(\pi (x-x_0)))^n} \sum_{j \in \Z} \tau^{1/4} \chi\left( \sqrt{\tau} 2j \pi  - \frac{\delta}{\sqrt\tau} \right) (-\Delta_d)^n \left(e^{ 2 i j \pi (x-x_0)}\right) 
		\\
		& = \frac{1}{ (4 \sin^2(\pi (x-x_0)))^n} \sum_{j \in \Z} \tau^{1/4} (-\Delta_d)^n \left( \chi\left( \sqrt{\tau} 2j \pi  - \frac{\delta}{\sqrt\tau} \right) \right) e^{ 2 i j \pi (x-x_0)},
	\end{align*}
	so that
	\begin{equation}
		\label{Est-y-tau-0}
		\sup_{|x- x_0| \in (\varepsilon, 1-\varepsilon)} |y_\tau^0(x) | \leq C_{n,\varepsilon} \tau^{n-1/4},
	\end{equation}
	where we used that 
	$$
		\left| (-\Delta_d)^n \left( \chi\left( \sqrt{\tau} 2j \pi  - \frac{\delta}{\sqrt\tau} \right) \right)\right| \leq C_{n,\varepsilon} \tau^n, 
	$$
	and that the number of non-vanishing terms in the sum is of order $\tau^{-1/2}$, see \eqref{Numbers-Of-Terms}.
	
	In particular, considering \eqref{NormL2-Y0} and \eqref{Est-y-tau-0} with $\varepsilon>0$ such that $\{x \in \mathbb{S}, \, d_{\mathbb{S}}(x,x_0) \geq \varepsilon\}  \subset (1- \tilde T, 1)$, where $d_{\mathbb{S}}$ is the geodesic distance on $\mathbb{S}$, we have found a sequence $y_\tau^0 \in \mathfrak{C}(\delta_2/\tau)\cap \mathfrak{C}(\delta_1/\tau)^\perp$ such that 
	$$
		 \int_{1- \tilde T}^1 |y^0_\tau(x)|^2 \, dx \leq C_{n,\varepsilon}^2 \tau^{2n-1/2} \quad \hbox{ while } \quad \int_0^1 |y^0(x)|^2 \, dx \underset{\tau \to 0}{\longrightarrow} \frac{1}{2\pi},
	$$
	thus contradicting \eqref{Obs-Torus-Ttilde-bis}.
\end{proof}

\section{Further comments}\label{Sec-Further}

\subsection{Fully discrete approximation schemes}\label{Sec-Further-Space}

Our approach also applies in the context of fully-discrete approximation schemes for \eqref{FD-Abstract}, or more generally, to time-discrete approximations of a family of time continuous equations depending on a parameter. 

Following \cite{je3}, we introduce the following class:
\begin{definition}
	Let $p\in \N$, $C_p>0$, $C_0>0$ and $T_0>0$ be constant parameters and define the set $\mathcal{S}( p, C_p, C_0, T_0)$ as the set of elements $(\A, \X, \B, \U)$ such that:
	\begin{itemize}
		\item $\X$ and $\U$ are Hilbert spaces; 
		\item $\A$ is a skew-adjoint unbounded operator defined in $\X$ with dense domain $\mathcal{D}(\A)$ and compact resolvent;
		\item $\B \in \mathfrak{L}(\mathcal{D}(\A^p), \U)$ and satisfies \eqref{C-p-boundedness} with constant $C_p$;
		\item System \eqref{FD-Abstract} is observable through $\B$ in time $T_0$ and satisfies \eqref{FD-Obs} with constant $C_0$.
	\end{itemize}
\end{definition}

Theorem \ref{Thm-Main} can then be reformulated as follows:
\begin{theorem}
	\label{Thm-Main-Reformulated}
	Let $p \in \N$, $C_p >0$, $C_0 >0$ and $T_0 >0$. Let also $f$ be a smooth function describing the discretization process as in \eqref{TonEig} satisfying \eqref{HypG-1}--\eqref{HypG-2}--\eqref{HypG-3} and fix $\delta \in (0,R)$. Then, for all $T$ satisfying \eqref{TimeConditionTau}, there exist positive constants $C$ and $\tau_0>0$ such that for any $\tau \in (0,\tau_0)$, solutions $y_\tau$ of \eqref{AbstractTimeDisc} lying in $\mathfrak{C}(\delta/\tau)$ satisfy \eqref{Fully-Obs} uniformly for $(\A, \X, \B, \U)$ in $\mathcal{S}(p, C_p, C_0, T_0)$.
\end{theorem}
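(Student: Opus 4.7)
The plan is to observe that the proof of Theorem~\ref{Thm-Main} already delivers the uniform statement once one tracks the dependence of every constant on the data. I would therefore revisit the argument of Section~\ref{Sec-Proof-Main} step by step and check that nothing in it sees the specific quadruple $(\A, \X, \B, \U)$ beyond the four parameters $p, C_p, C_0, T_0$.

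First, the representation kernel $\rho_\tau$ of \eqref{KernelDiscTransmutation} and the cutoff $\chi$ compactly supported in $(-f(\delta+\varepsilon), f(\delta+\varepsilon))$ are built purely out of $f, \delta, \varepsilon, \tau$ and do not involve the operator or the space. Consequently the localization estimate of Proposition~\ref{RhoDescription} yields constants $C_{n,\varepsilon}$ depending only on $n, \varepsilon, f, \delta, T_0$, and the boundedness of the transmutation operator in Proposition~\ref{Prop-TransmutOp} is controlled by $\norm{\chi}_\infty$ and $\sup_{|\alpha|\leq \delta+\varepsilon} f'(\alpha)$; in particular the bound is the same for every element of $\mathcal{S}(p, C_p, C_0, T_0)$.

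Next I would apply Theorem~\ref{Thm-DiscTransmutation} to convert any solution $y_\tau$ of \eqref{AbstractTimeDisc} with $y^0 \in \mathfrak{C}(\delta/\tau)$ into the solution $y$ of \eqref{FD-Abstract}, and then use the continuous observability \eqref{FD-Obs} with constant $C_0$, which by definition of $\mathcal{S}$ is uniform, to obtain \eqref{KeyTransmutation}. Splitting the right-hand side as in \eqref{RHS-Transmutation}, the localized contribution $\mathcal{O}_{t_{1,\varepsilon}, T_{1,\varepsilon}}$ is handled by Proposition~\ref{Prop-TransmutOp} with a $(\A,\X,\B,\U)$-independent constant. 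For the two tails, the sup bound \eqref{EstCxk} must be used, namely
\[
\norm{\Bcal y_\tau^k}_\U^2 \leq C_p^2\bigl(\norm{\A^p y_\tau^k}_\X^2 + \norm{y_\tau^k}_\X^2\bigr) \leq C_p^2\bigl((\delta/\tau)^{2p} + 1\bigr)\norm{y^0}_\X^2,
\]
where the first inequality is \eqref{C-p-boundedness} (uniform thanks to $C_p$) and the second uses the conservation of the $\X$- and $\mathcal{D}(\A^p)$-norms along the discrete flow, together with the fact that $\norm{\A^p \cdot}_\X \leq (\delta/\tau)^p \norm{\cdot}_\X$ on $\mathfrak{C}(\delta/\tau)$. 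The resulting constant in \eqref{EstObsTrans-1et3} depends only on $f, \delta, \varepsilon, p, C_p, n, T_0$.

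Finally I would close the argument exactly as in \eqref{TransmutationAlmostOK}: choosing $n \geq p+1$ and $\tau_0$ small enough so that the coefficient $C_n \tau^{4n-2-2p}$ of $\norm{y^0}_\X^2$ is at most $1/2$ absorbs the tail on the left, with $\tau_0$ depending only on $f, \delta, \varepsilon, p, C_p, C_0, T_0$. Then $\varepsilon > 0$ is taken small enough so that $T_\varepsilon = T_0/\inf_{|\alpha|\leq \delta+\varepsilon} f'(\alpha) + 2\varepsilon < T$, which is possible by \eqref{TimeConditionTau} and the continuity of $\alpha \mapsto \inf_{|\beta|\leq \alpha} f'(\beta)$. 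There is no substantial new obstacle: the entire content of the theorem is the bookkeeping observation that the proof of Theorem~\ref{Thm-Main} is insensitive to the identity of $(\A, \X, \B, \U)$ beyond the four quantitative invariants packaged in $\mathcal{S}(p, C_p, C_0, T_0)$.
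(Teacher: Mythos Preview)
Your proposal is correct and matches the paper's approach: the paper presents Theorem~\ref{Thm-Main-Reformulated} explicitly as a reformulation of Theorem~\ref{Thm-Main} with no separate proof, relying on the fact (already stated in Theorem~\ref{Thm-Main}) that the constants $C$ and $\tau_0$ depend only on $f, T, \delta, p, C_p, C_0, T_0$. Your careful tracking of the dependencies through Propositions~\ref{RhoDescription} and~\ref{Prop-TransmutOp} and the estimate \eqref{EstCxk} is precisely the bookkeeping that justifies this reformulation.
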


In particular, Theorem \ref{Thm-Main-Reformulated} allows to decompose the study of the observability properties of a fully-discrete approximation scheme in two steps. 

Indeed, if $(\A, \X, \B, \U)$ belongs to some $\mathcal{S}(p,C_p, C_0, T_0)$ and $\X$ is an infinite dimensional vector space, the usual strategy to construct a fully discrete approximation of $y' = \A y$ is to first design a space semi-discrete approximation scheme. If $h>0$ denotes the space semi-discretization parameter, these approximations are defined on finite-dimensional Hilbert spaces $\X_h$ and can be written as $y_h' = \A_h y_h$. Similarly, the observation, originally given by $\B y$ on the time and space continuous model $y' = \A y$, is approximated by $\B_h y_h$ for some operator $\B_h$ defined on $\X_h$ and with values in some Hilbert space $\U_h$ approximating $\U$ in some sense. 

Theorem \ref{Thm-Main-Reformulated} then states that, if we can find some $p \in \N$ and constants $C_p,\, C_0, \, T_0$ such that for all $h >0$, $(\A_h, \X_h, \B_h, \U_h) \in \mathcal{S}(p, C_p, C_0, T_0)$ then, taking $f$ satisfying the assumption \eqref{HypG-1}--\eqref{HypG-2}--\eqref{HypG-3} and fixing $\delta \in (0,R)$, the solutions $y_{\tau, h}$ of the fully discrete schemes 
$$
	y^{k+1}_{\tau, h} = \exp( i f(- i \A_h \tau)) y^k_{\tau, h}, \quad k \in \N, \qquad y^0_{\tau, h} = y^0_h
$$ 
satisfy 
$$
	\norm{y^0_h}_{\X_h}^2 \leq C \tau \sum_{ k \tau \in (0,T)} \norm{ \B_h y^k_{\tau, h}}_{\U_h}^2
$$
provided $y^0_h \in \mathfrak{C}_h(\delta/\tau)$, where $\mathfrak{C}_h(\delta/\tau)$ is the vector space spanned by the eigenfunctions of $\A_h$ corresponding to eigenvalues of modulus smaller than $\delta /\tau$.

Thus, in many practical situations, proving observability properties for fully-discrete approximations of \eqref{FD-Abstract} uniformly with respect to both space and time discretization parameters is reduced to proving observability properties for time continuous and space semi-discrete approximations of \eqref{FD-Abstract}.

We do not give further details on this strategy as we have already developed it in \cite[Section 5]{je3} - except for the estimate \eqref{TimeConditionTau} on the time which is new. 

Also note that our strategy also applies to derive uniform admissibility estimates for fully discrete approximations of \eqref{FD-Abstract} - where uniform means with respect to both space and time discretization parameters - provided uniform admissibility estimates are proved for the corresponding space semi-discrete and time continuous approximations of \eqref{FD-Abstract} - here, uniform means with respect to the space semi-discretization parameter.

In both situations, our approach successfully reduces the study of observability/admissibility issues for the time and space discrete approximations of \eqref{FD-Abstract} to the study of the observability/admissibility properties for the underlying time-continuous and space semi-discrete approximations of \eqref{FD-Abstract}, for which a large literature is available, see e.g. \cite{InfZua,CasMic,CasMicMunch,NegMatSch,NegZua} and the survey articles \cite{Zua05Survey,ErvZuaCime}.

\subsection{Discrete Ingham inequalities}\label{Sec-Further-Ingham}

As a by-product of our analysis, we proved the following discrete Ingham inequalities:

\begin{theorem}
	\label{Thm-Discrete-Ing}
	Let $I = \mathbb{N}$ or $\mathbb{Z}$ and $(\mu_j)_{j \in I}$ be an increasing sequence of real numbers such that, for some $\gamma >0$,
	\begin{equation}
		\label{GapCondition}
		\inf_{j \in I } | \mu_{j+1} - \mu_j | \geq \gamma.
	\end{equation}
	Let $f$ be a smooth function describing the time-discrete operator $\mathbb{T}_\tau$ as in \eqref{TonEig}, assume \eqref{HypG-1}--\eqref{HypG-2}--\eqref{HypG-3} and fix $\delta \in (0,R)$.
	
	Then for all time 
	\begin{equation}
		\label{TimeIngDiscrete}
		T> \frac{2\pi}{\gamma \underset{|\alpha| \leq \delta}\inf f'(\alpha) },
	\end{equation}
	there exist two positive constants $C$ and $\tau_0>0$ such that for all $\tau \in (0,\tau_0)$, for all $(a_j)_{j \in I} \in \ell^2(I)$ vanishing for $j \in I$ such that $|\mu_j| \tau \geq \delta$,
	\begin{equation}
		\label{InghamDiscrete}
		\frac{1}{C} \sum_{j \in I} | a_j|^2
		\leq
		 \tau \sum_{k \tau \in (0,T)} \left| \sum_{j \in I} a_j e^{i f(\mu_j \tau) k} \right|^2 
		\leq
		C  \sum_{j \in I} | a_j|^2.
	\end{equation}
\end{theorem}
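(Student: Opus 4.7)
The plan is to derive the discrete Ingham inequalities \eqref{InghamDiscrete} from Theorem \ref{Thm-Main} and Theorem \ref{MainFully5} applied to a canonical abstract model whose continuous observability and admissibility are precisely the classical Ingham inequality.

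First I would set up the model. Take $\X = \ell^2(I)$ with canonical basis $(e_j)_{j\in I}$, and let $\A$ be the diagonal multiplier $\A e_j = i\mu_j e_j$, defined on $\mathcal{D}(\A) = \{(y_j) : \sum_j |\mu_j|^2 |y_j|^2 < \infty\}$. Then $\A$ is skew-adjoint, and because the gap condition forces $|\mu_j|\to\infty$, $\A$ has compact resolvent; the filtered class $\mathfrak{C}(\delta/\tau)$ coincides with the set of sequences $(a_j)$ supported on indices $j$ with $|\mu_j|\tau \leq \delta$, which is exactly the class appearing in the statement. Next, set $\U = \C$ and $\B y = \sum_{j\in I} y_j$; the gap condition \eqref{GapCondition} implies that $\sum_j (1+|\mu_j|^2)^{-1}$ converges, so Cauchy--Schwarz gives $|\B y|^2 \leq C(\|y\|_\X^2 + \|\A y\|_\X^2)$, i.e.\ \eqref{C-p-boundedness} holds with $p=1$.

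Under this identification, the solution of \eqref{FD-Abstract} with initial data $y^0 = \sum_j a_j e_j$ satisfies $\B y(t) = \sum_j a_j e^{i\mu_j t}$, and by \eqref{TonEig} the solution of \eqref{AbstractTimeDisc} satisfies $\B y^k_\tau = \sum_j a_j e^{ikf(\mu_j\tau)}$. The classical Ingham theorem then asserts that, for every $T_0 > 2\pi/\gamma$, system \eqref{FD-Abstract} is both observable through $\B$ and admissible for $\B$ at time $T_0$, with constants depending only on $\gamma$ and $T_0$. Applying Theorem \ref{Thm-Main} yields the lower bound in \eqref{InghamDiscrete} for every $T > T_0/\inf_{|\alpha|\leq\delta}\{f'(\alpha)\}$; letting $T_0$ decrease to $2\pi/\gamma$ gives the lower bound for any $T$ satisfying \eqref{TimeIngDiscrete}. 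Theorem \ref{MainFully5} simultaneously provides the upper bound, for any $T > 0$.

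There is essentially no real obstacle here; the only mildly delicate point is the verification of \eqref{C-p-boundedness} for $\B$, which follows from \eqref{GapCondition} via Cauchy--Schwarz on $\mathcal{D}(\A)$. Everything else is simply a rephrasing of the two main theorems of the paper in the special case of a purely spectral model, and the sharp time \eqref{TimeIngDiscrete} matches \eqref{TimeConditionTau} applied with the sharp continuous Ingham time $T_0 = 2\pi/\gamma$.
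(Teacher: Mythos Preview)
Your proposal is correct and follows essentially the same approach as the paper: set up the canonical diagonal model on $\ell^2(I)$ with $\A e_j = i\mu_j e_j$ and $\B y = \sum_j y_j$, invoke the classical Ingham inequality for continuous observability/admissibility at any $T_0 > 2\pi/\gamma$, and then apply Theorems~\ref{Thm-Main} and~\ref{MainFully5}. Your write-up is in fact slightly more detailed than the paper's (explicit mention of compact resolvent and the limiting argument $T_0 \downarrow 2\pi/\gamma$), but the structure is identical.
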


Theorem \ref{Thm-Discrete-Ing} has to be compared with the results of \cite{NegZua06}, which derived a discrete Ingham lemma. Indeed, \cite{NegZua06} gives assumptions on an increasing sequence $(\lambda_j)_{j  \in I}$ of real numbers under which one can guarantee for all $\tau \in (0,\tau_0)$ and $(a_j)_{j \in I} \in \ell^2(I)$,
\begin{equation}
	\label{InghamDiscrete-Neg}
		\frac{1}{C} \sum_{j \in I} | a_j|^2
		\leq
		 \tau \sum_{k \tau \in (0,T)} \left| \sum_{j \in I} a_j e^{i \lambda_j k \tau} \right|^2 
		\leq
		C  \sum_{j \in I} | a_j|^2.
\end{equation}
It is proven in \cite{NegZua06} that \eqref{InghamDiscrete-Neg} holds when assuming the existence of a gap $\tilde \gamma >0$ such that
\begin{equation}
	\label{GapNeg}
	\inf_{j \in I } | \lambda_{j+1} - \lambda_j | \geq \tilde \gamma,
\end{equation}
and provided that the sequence $\lambda_j$ satisfies, for some $p \in (0,1/2)$, 
\begin{equation}
	\label{Cond-Et}
	\sup_{k, \ell} |\lambda_k - \lambda_\ell| \leq \frac{2\pi - \tau^p}{\tau}.
\end{equation}
When both \eqref{GapNeg} and \eqref{Cond-Et} hold, see \cite{NegZua06}, for any time $T> 2 \pi /\tilde \gamma$, there exists a constant $C >0$ depending only on $\tilde \gamma$ and $p$ such that \eqref{InghamDiscrete-Neg} holds. 

Of course, the result in \cite{NegZua06} is thus very similar to Theorem \ref{Thm-Discrete-Ing} as one can check by setting 
$$
	\lambda_j = \frac{f(\mu_j\tau)}{\tau}, \quad j \in I \hbox{ such that } |\mu_j| \tau \leq \delta, 
$$
which satisfies \eqref{GapNeg} with $\tilde \gamma =  \gamma \inf_{|\alpha| \leq \delta} f'(\alpha) $ and \eqref{Cond-Et} due to the assumption $\delta <R$.

Nevertheless, our approach yields various generalizations of discrete Ingham inequalities from the ones known in the continuous setting, for instance the ones in \cite{LoretiMehrenberger,Mehrenberger}.

For completeness, we briefly explain below how Theorem \ref{Thm-Discrete-Ing} follows from Theorem \ref{Thm-Main} and Theorem \ref{MainFully5}.

\begin{proof}[Proof of Theorem \ref{Thm-Discrete-Ing}]
	Let us consider $\X = \ell^2(I)$, set $\Phi_j = (a_{\ell,j})_{ \ell \in I}$ with $a_{\ell,j} = \delta_{\ell,j}$, and define the operators $\A$ and $\B$ by $\A \Phi_j = i \mu_j \Phi_j$, and $\B \Phi_j = 1$, which is continuous on $\mathcal{D}(\A)$ since $1/(1+ \mu_j^2)$ is summable under the assumption \eqref{GapCondition}. 
	
	According to Ingham's lemma \cite{Ing}, for all $T>2 \pi/\gamma$, there exists a constant $C >0$ such that for all $(a_j) \in \ell^2(I)$,
	$$
		\frac{1}{C} \sum_{j \in I} |a_j|^2 \leq \int_0^T \left| \sum_j a_j e^{ i \mu_j t} \right|^2\, dt \leq C \sum_{j \in I } |a_j|^2.
	$$
	This can be rewritten as follows: for all $y^0 = \sum_j a_j \Phi_j \in \ell^2(I)$, the solution $y$ of $y' = \A y$ with initial data $y^0$ satisfies
	$$
		\frac{1}{C} \norm{y^0}_{\ell^2(I)}^2 \leq \int_0^T | \B y(t) |^2 \, dt \leq C \norm{y^0}_{\ell^2(I)}^2. 
	$$
	Hence we can apply Theorem \ref{Thm-Main} and Theorem \ref{MainFully5} immediately to $\A$ and $\B$ and the corresponding discretizations described by  \eqref{TonEig} and satisfying assumptions \eqref{HypG-1}--\eqref{HypG-2}--\eqref{HypG-3}. Theorem \ref{Thm-Discrete-Ing} then follows from the explicit form of the solutions of $y_\tau^{k+1} = \T_\tau y_\tau^k$ with initial data $y_\tau^0 = \sum_j a_j \Phi_j$, which is simply given by $y_\tau^k = \sum_j a_j e^{i f(\mu_j \tau) k} \Phi_j$. 
\end{proof}

\subsection{Weak observability estimates}\label{Sec-Further-Weak}

In this section, we briefly focus on the case of weak observability estimates. To be more precise, we consider an observation operator $\B \in \mathfrak{L}(\mathcal{D}(\A^p), \U)$ and we assume the following: there exist a norm $\norm{\cdot}_*$, a time $T_w>0$ and a positive constant $C_{w}$ such that for all solutions $y$ of \eqref{FD-Abstract} with initial data $y^0 \in \mathcal{D}(\A^p)$,
\begin{equation}
	\label{WeakObs}
	\norm{y^0}_*^2 \leq C_w^2 \int_0^{T_w} \norm{\B y(t)}_\U^2\, dt.
\end{equation}

This property is a weak observability property for the system \eqref{FD-Abstract}. Roughly speaking, this property appears as soon as solutions $y$ of \eqref{FD-Abstract} satisfy the following unique continuation property:
\begin{equation}
	\label{UCP}
	\B y(t) = 0  \hbox{ on } (0,T) \Rightarrow y \equiv 0,
\end{equation}
since then one can simply define $\norm{\cdot}_*$ as 
\begin{equation}
	\label{Trivial-Norm-*}
	\norm{y^0}_*^2 = \int_0^T \norm{\B y(t)}_\U^2\, dt.
\end{equation}
Of course, defining $\norm{\cdot}_*$ as in \eqref{Trivial-Norm-*} does not provide any other information than the unique continuation property \eqref{UCP}.

When trying to quantify unique continuation properties, it is then natural to try to find a norm $\norm{\cdot}_*$ which can be compared to the norms constructed on, for instance, the iterated domains of the operator $\A$. 

To be more precise, we introduce the family $\X_r$ of Hilbert spaces indexed by $r \in \R$ as follows: for $n \in \N$, we set $\X_n = \mathcal{D}(\A^{n})$; for $r \in \R_+$, we take $n \in \N$ such that $r \in [n,n+1]$, and we define $\X_r$ as the interpolate between $\X_{n}$ and $\X_{n+1}$ of order $r - n$. We then define $\X_{r}$ for $r <0$ as the dual spaces of $\X_r$ ($\X_0= \X$ is identified with its dual). The corresponding norms $\norm{\cdot}_r$ on $\X_r$ are then the following ones: for $y = \sum a_j \Phi_j$,
$$
	\norm{y}_{r}^2 = \sum_j |a_j|^2 (1+ \mu_j^2)^{r}.
$$

We thus assume that there exist a constant $r \in \R$ and $C_r>0$, such that for all $y \in \X_p$,
\begin{equation}
	\label{Ass-norm-*}
	\norm{y}_r \leq C_r \norm{y}_*.
\end{equation}
Note that this definition makes sense for all $y \in \X_p$ since the weak observability estimate \eqref{WeakObs} guarantees that $\norm{y}_*$ is finite for $y \in \X_p$. Actually, estimate \eqref{WeakObs} and $\B \in \mathfrak{L}(\mathcal{D}(\A^p), \U)$ also imply $r \leq p$. Also note that when $r \geq 0$, inequality \eqref{WeakObs} is stronger than the one in \eqref{FD-Obs}. We are thus mainly interested in the case $r <0$. 

We then have the following variant of Theorem \ref{Thm-Main}:

\begin{theorem}\label{Thm-Main-Weak}
	Assume that $\Bcal \in \mathfrak{L}(\mathcal{D}(\A^p), \U)$ for some $p\in \N$ and $\U$ an Hilbert space, and that $\Bcal$ satisfies \eqref{C-p-boundedness} with constant $C_p$.
	
	Assume that equation \eqref{FD-Abstract} is observable through $\Bcal$ at time $T_w$ with constant $C_w$ and norm $\norm{\cdot }_*$, i.e. for all $y^0 \in \mathcal{D}(\A^p)$, the solution $y(t)$ of \eqref{FD-Abstract} with initial data $y^0$ satisfies \eqref{WeakObs}. Also assume that there exist a constant $C_r>0$ and $r \leq p$ such that \eqref{Ass-norm-*} holds.

	Let $f$ be a smooth function describing the time discrete operator $\T_\tau$ as in \eqref{TonEig}, assume \eqref{HypG-1}--\eqref{HypG-2}--\eqref{HypG-3}, and fix $\delta \in (0,R)$.
	Then, for all 
	\begin{equation}
		\label{TimeConditionTau-weal}
		T > \frac{T_w}{\underset{|\alpha| \leq \delta}\inf \{f'(\alpha) \}},
	\end{equation} 
	there exist positive constants $C$ and $\tau_0>0$ depending on $f$, $T$, $\delta$, $p$, $C_p$, $C_w$, $T_w$, $C_r$, $r$ such that for any $\tau \in (0,\tau_0)$, solutions $y_\tau$ of \eqref{AbstractTimeDisc} lying in $\mathfrak{C}(\delta/\tau)$ satisfy 
	\begin{equation}
		\label{Dis-Obs-Weak}
		\norm{y^0_\tau}_*^2 \leq C \tau \sum_{k \tau \in (0,T)} \norm{\B^* y^k_\tau}_\U^2.
	\end{equation}
\end{theorem}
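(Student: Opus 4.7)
The strategy is to mimic the proof of Theorem \ref{Thm-Main} step by step, replacing the strong observability estimate \eqref{FD-Obs} by its weak counterpart \eqref{WeakObs}. Starting from $y^0 \in \mathfrak{C}(\delta/\tau)$ with corresponding discrete solution $y_\tau^k$, apply Theorem \ref{Thm-DiscTransmutation} with a cut-off $\chi$ supported in $(-f(\delta+\varepsilon), f(\delta+\varepsilon))$ to produce the continuous solution $y(t) = \tau \sum_k \rho_\tau(t, k\tau) y_\tau^k$ of \eqref{FD-Abstract} with the same initial datum $y^0$. The weak observability \eqref{WeakObs} then gives
$$
\norm{y^0}_*^2 \leq C_w^2 \int_0^{T_w} \Big\Vert \tau \sum_{k\in\Z} \rho_\tau(t, k\tau) \B y_\tau^k \Big\Vert_\U^2 \, dt.
$$
Splitting the sum into the three ranges $k\tau \leq t_{1,\varepsilon}$, $t_{1,\varepsilon} < k\tau < T_{1,\varepsilon}$ and $k\tau \geq T_{1,\varepsilon}$ with $t_{1,\varepsilon} = -\varepsilon$ and $T_{1,\varepsilon} = T_w/\inf_{|\alpha|\leq\delta+\varepsilon} f'(\alpha) + \varepsilon$, the middle piece is controlled by $C \tau \sum_{t_{1,\varepsilon}<k\tau<T_{1,\varepsilon}} \norm{\B y_\tau^k}_\U^2$ via Proposition~\ref{Prop-TransmutOp}, exactly as in \eqref{EstObsTrans-2}, while the two tails are bounded by $C_n \tau^{4n-2-2p} \norm{y^0}_\X^2$ using Proposition~\ref{RhoDescription} and \eqref{C-p-boundedness} with \eqref{EstCxk}.

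The main obstacle is that the tails come with $\norm{y^0}_\X^2$, whereas the left-hand side only controls $\norm{y^0}_*^2$, which may be much weaker. The way to bridge this gap is to exploit the filtering and the hypothesis \eqref{Ass-norm-*}. Indeed, for any $y^0 = \sum_{|\mu_j|\leq\delta/\tau} a_j \Phi_j \in \mathfrak{C}(\delta/\tau)$, a direct computation gives
$$
\norm{y^0}_\X^2 = \sum_j |a_j|^2 \leq \sup_{|\mu_j| \leq \delta/\tau} (1+\mu_j^2)^{-r} \norm{y^0}_r^2 \leq C_\delta\, \tau^{-2r_-}\, \norm{y^0}_r^2,
$$
where $r_- = \min(r,0)$, so combined with \eqref{Ass-norm-*} one obtains $\norm{y^0}_\X^2 \leq C\, \tau^{-2r_-} \norm{y^0}_*^2$. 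Plugging this into the tail estimate, the remainder becomes $C_n \tau^{4n-2-2p+2r_-} \norm{y^0}_*^2$, and since $r \leq p$, choosing $n \in \N$ with $2n > 1 + p - r_-$ makes the exponent strictly positive. An absorption argument then yields, for $\tau$ small enough,
$$
\tfrac{1}{2} \norm{y^0}_*^2 \leq C\, \tau \sum_{t_{1,\varepsilon} < k\tau < T_{1,\varepsilon}} \norm{\B y_\tau^k}_\U^2.
$$

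To conclude, the conservative nature of the time-discrete flow \eqref{AbstractTimeDisc} allows a time translation identical to the one used in deriving \eqref{TransmutationAlmostOK-1} from \eqref{TransmutationAlmostOK}, yielding the desired inequality \eqref{Dis-Obs-Weak} with observation time $T_\varepsilon = T_{1,\varepsilon} - t_{1,\varepsilon} = T_w/\inf_{|\alpha|\leq\delta+\varepsilon} f'(\alpha) + 2\varepsilon$. Letting $\varepsilon \to 0$ recovers any $T$ satisfying \eqref{TimeConditionTau-weal}. The only genuinely new ingredient compared to Theorem \ref{Thm-Main} is the quantitative interpolation estimate between $\norm{\cdot}_\X$ and $\norm{\cdot}_*$ on the filtered class, which simply costs a factor $\tau^{-2r_-}$ to be compensated by the polynomial smallness of the tails.
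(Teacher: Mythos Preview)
Your proof is correct and follows the paper's own argument essentially line by line: the same transmutation formula from Theorem~\ref{Thm-DiscTransmutation}, the same three-piece splitting handled via Propositions~\ref{RhoDescription} and~\ref{Prop-TransmutOp}, and the same absorption after comparing $\norm{y^0}_\X$ to $\norm{y^0}_*$ on $\mathfrak{C}(\delta/\tau)$ through \eqref{Ass-norm-*}. One minor slip: the intermediate bound should read $\norm{y^0}_\X^2 \leq C_\delta\,\tau^{2r_-}\norm{y^0}_r^2$ rather than $\tau^{-2r_-}$ (for $r<0$ the factor must blow up as $\tau\to 0$, since $\norm{\cdot}_r$ is the weaker norm), but your subsequent remainder exponent $4n-2-2p+2r_-$ and the condition $2n>1+p-r_-$ are correct and coincide with the paper's $4n-2-2p+2\min\{r,0\}$.
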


\begin{proof}
	The proof of Theorem \ref{Thm-Main-Weak} is very close to the one of Theorem \ref{Thm-Main}, see Section \ref{Sec-Proof-Main}. Indeed, we first introduce $\varepsilon >0$, a function $\chi$ compactly supported on $(-f(\delta+\varepsilon), f(\delta+\varepsilon))$ and taking value one on $(-f(\delta), f(\delta))$ and the kernel function $\rho_\tau$ given by Theorem \ref{Thm-DiscTransmutation}. Then setting $t_{1, \varepsilon} = - \varepsilon$ and $T_{1, \varepsilon} = \varepsilon +  T_w/{\inf_{|\alpha| \leq \delta} \{f'(\alpha) \}}$, we get the counterpart of \eqref{TransmutationAlmostOK0}: for some constants $C, C_n$ independent of $\tau>0$,
	\begin{multline*}
		\int_0^{T_w} \norm{ \tau \sum_{k \in \Z} \rho_\tau(t, k \tau) \Bcal y_\tau^k}_\U^2 \, dt 
		\leq C \tau \sum_{t_{1, \varepsilon} < k\tau < T_{1, \varepsilon}} \norm{\Bcal y_\tau^k}_\U^2 \\
		+ C_n \tau^{4n-2-2p}  \norm{y^0}_\X^2,
	\end{multline*}
	
	Using then Theorem \ref{Thm-DiscTransmutation} and the fact that $y(t)$ defined by \eqref{Transmutation} is a solution of \eqref{FD-Abstract}, we have
	$$
		\norm{y^0}_*^2 \leq C \tau \sum_{t_{1, \varepsilon} < k\tau < T_{1, \varepsilon}} \norm{\Bcal y_\tau^k}_\U^2 \\
		+ C_n \tau^{4n-2-2p}  \norm{y^0}_\X^2,
	$$
	instead of \eqref{TransmutationAlmostOK}.
	
	One then uses that, since $y^0 \in \mathfrak{C}(\delta/\tau)$, there exists a constant $C>0$ independent of $\tau \in (0,1)$ and $y^0$ such that
	$$
		\norm{y^0}_{\X}^2 \leq \left\{
			\begin{array}{ll}
				\ds C \tau^{2r} \norm{y^0}_r^2 & \quad \hbox{ if } r <0, 
				\\
				\ds C \norm{y^0}_r^2 & \quad \hbox{ if } r \geq 0.
			\end{array}\right.
	$$
	Using then \eqref{Ass-norm-*}, estimate \eqref{TransmutationAlmostOK0} implies
	\begin{equation}
		\label{TransmutationAlmostOK-w}
		\norm{y^0}_*^2 \left(1- C_n \tau^{4n-2-2p+2\min\{r,0\} }   \right)
		\leq 
		C \tau \sum_{t_{1, \varepsilon} < k\tau < T_{1, \varepsilon}} \norm{\Bcal y_\tau^k}_\U^2.
	\end{equation}
	Thus, taking $n \geq 1 + p- \min\{r, 0\}$ and $\tau >0$ small enough, we obtain \eqref{Dis-Obs-Weak} with 
	$$
		T_\varepsilon = T_{1, \varepsilon} - t_{1, \varepsilon} = \frac{T_w}{\underset{|\alpha|\leq \delta + \varepsilon}\inf \{f'(\alpha) \} } + 2 \varepsilon.
	$$
	This concludes the proof of Theorem \ref{Thm-Main-Weak} by taking $\varepsilon>0$ small enough.
\end{proof}

Among the typical cases fitting our assumptions, let us quote the case of the wave equation on $(0,1)$ observed from a point $x_0 \in (0,1) \setminus \mathbb{Q}$. In that case, the equation reads:
\begin{equation}
	\label{1d-Wave}
	\left\{
		\begin{array}{ll}
			\partial_{tt}y - \partial_{xx} y = 0, \quad &\hbox{ for } (t,x) \in (0,T) \times (0,1),
			\\
			y(t,0) = y (t,1) = 0,\quad & \hbox{ for } t \in (0,T), 
			\\
			(y(0,\cdot), \partial_t y(0,\cdot)) = (y^0,y^1) &\hbox{ in } L^2(0,1) \times H^{-1}(0,1), 
		\end{array}
	\right.
\end{equation}
and the observation is given by $y(t,x_0)$.

Equation \eqref{1d-Wave} indeed fits the abstract setting of \eqref{FD-Abstract} by setting
\begin{multline*}
	Y = \left( \begin{array}{c} y \\ \partial_t y \end{array}\right) = \left( \begin{array}{c} Y_1 \\ Y_2 \end{array}\right), \quad 
	\A = \left( \begin{array}{cc} 0 & Id \\ \partial_{xx} & 0 \end{array}\right), 
	\\
	\hbox{ with } \X = L^2(0,1) \times H^{-1}(\Omega), \quad \mathcal{D}(\A) =H^1_0(0,1) \times L^2(0,1).
\end{multline*}
and the point-wise observation operator is given for smooth $Y$ by
$$
	\B Y = \B \left( \begin{array}{c} Y_1 \\ Y_2 \end{array}\right)  = Y_1(x_0).
$$
Sobolev's embedding easily shows that $\B$ is continuous from $\mathcal{D}(\A^{1/2+\varepsilon})$ to $\R$ for any $\varepsilon >0$.

Besides, expanding solutions $y$ of \eqref{1d-Wave} in Fourier, one easily checks
$$
	y(t,x) = \sqrt{2} \sum_{j \geq 1} \Big(a_j \exp(i j \pi t) + b_{j} \exp( - i j \pi t )\Big) \sin(j \pi x) , 
$$
where the coefficients $(a_j),\,(b_j)$ can be characterized from the expansion of the initial datum $(y^0, y^1)$: if $(y^0,y^1)$ are given by 
$$
	y^0 (x) = \sqrt{2} \sum_{ j \geq 1} \alpha_j \sin(j \pi x)	, \qquad y^1 (x) = \sqrt{2} \sum_{ j \geq 1} \beta_j \sin(j \pi x),
$$
the coefficients $(a_j),\,(b_j)$ are given by 
$$
	a_j = \frac{1}{2} \left( \alpha_j - i\frac{\beta_j}{j\pi}\right), \quad b_j = \frac{1}{2} \left( \alpha_j + i\frac{\beta_j}{j\pi}\right).
$$
In particular, using Parseval's identity, one easily gets
\begin{eqnarray*}
	\int_0^2 |\B Y(t) | \, dt = \int_0^2 |y(t,x_0)|^2 \, dt 
	 &=&  2 \sum_{j \geq 1} \left(|a_j|^2 +|b_j|^2\right) \sin^2(j \pi x_0)
	\\
	&	= &\sum_{ j \geq 1} \left( |\alpha_j|^2 + \frac{|\beta_j|^2}{j^2 \pi^2} \right) \sin^2(j \pi x_0).
\end{eqnarray*}
Of course, if $x_0 \notin \mathbb{Q}$, one easily checks that $\sin (j \pi x_0)$ cannot vanish for $j \in \N$, hence unique continuation holds. In particular, this implies that the semi-norm defined for 
$$
	Y = \left( \begin{array}{c} Y_1 \\ Y_2 \end{array}\right) = \left( \begin{array}{c}  \sqrt{2} \sum_{ j \geq 1} \alpha_j \sin(j \pi x) \\ \sqrt{2} \sum_{ j \geq 1} \beta_j \sin(j \pi x) \end{array}\right) 
$$
by 
$$
	\norm{ Y}_*^2 = \sum_{ j \geq 1} \left( |\alpha_j|^2 + \frac{|\beta_j|^2}{j^2 \pi^2} \right) \sin^2(j \pi x_0)
$$
is a norm satisfying \eqref{WeakObs} for $ T_w = 2$ and $C_w = 1$. 

Besides, one easily checks that the norms $\norm{\cdot}_r$ in that case are simply given by 
$$
	\norm{ Y}_r^2 = \sum_{ j \geq 1} \left( |\alpha_j|^2 + \frac{|\beta_j|^2}{j^2 \pi^2} \right) (1+(j \pi)^2)^{r/2}.
$$
Thus, if one wants to apply Theorem \ref{Thm-Main-Weak}, one should verify condition \eqref{Ass-norm-*}, i.e. that there exist $r >0$ and $C>0$ such that for all $j \in \N$,
\begin{equation}
	\label{Cond-x0}
	(1+(j \pi)^2)^{r/2} \leq C \sin^2(j \pi x_0).
\end{equation}
It turns out that condition \eqref{Cond-x0} is satisfied for a large set of irrational numbers $x_0 \notin \Q$, that we will denote by $\mathcal{S}$ in the following.

Indeed, $\mathcal{S}$ contains the irrational numbers $x_0 \in (0,1)$ whose expansion $[0,x_1,x_2, \cdots, x_n,\cdots]$ as a continuous fraction is given by a bounded sequence $(x_n)$, see \cite[p.23]{Lang}, for which the following property holds: there exists a positive constant $C$ such that for all $k \in \N$,
\begin{equation}
	\label{Cond-x0Lang}
	\inf_{p \in \Z} \{|q x_0 - p|\} \geq \frac{C}{q}.
\end{equation}
Since condition \eqref{Cond-x0Lang} is stronger than \eqref{Cond-x0}, we deduce in particular that $\mathcal{S}$ is uncountable.

Besides, $\mathcal{S}$ also contains all the irrational algebraic numbers according to Liouville's theorem: If $x_0 \in (0,1)$ is an algebraic number of degree $d$ on $\mathbb{Q}$, there exists $c>0$ such that for all $q \in \N$,
$$
	\inf_{ p \in \Z} \{|q x_0 - p|\} \geq \frac{c}{q^d}. 
$$

Theorem \ref{Thm-Main-Weak} applies when $x_0 \in \mathcal{S}$, and yields for instance the following observability result, corresponding to the Newmark method \eqref{Anewmark} with $\beta = 1/4$: for all $\delta >0$ and $T > 2(1+ \delta^2/4)$, there exists a constant $C>0$ such that for all $ \tau >0$ small enough, solutions $y_\tau$ of 
\begin{equation}
	\label{TimeDiscreteWave}
		\left\{
			\begin{array}{ll}
		\ds \frac{1}{\tau^2}\left(y_\tau^{k+1} - 2 y_\tau^k + y_\tau^{k-1} \right)- \partial_{xx} \left( \frac{1}{4}\left(y_\tau^{k+1} + 2 y_\tau^k + y_\tau^{k-1} \right)\right) = 0, 
		\\
		\hspace{6cm} \hbox{ for } (k,x) \in \Z \times (0,1), 
		\\
		\ds \frac{y^0_\tau + y^1_\tau}{2} = y^0,\,  \, \frac{y_\tau^1 - y_\tau^0}\tau = y^1,  \hbox{ for } x \in (0,1),
			\end{array}
		\right.
\end{equation}
with initial data 
$$
	(y^0, y^1) \in \left(\hbox{Span} \{ \sqrt{2} \sin(j \pi x), \hbox{ with } j \in \N \, \hbox{ satisfying } j \pi \leq \delta/\tau \} \right)^2
$$
all satisfy
\begin{equation}
	\norm{\left( \begin{array}{c} y_0 \\ y_1 \end{array}\right)}_*^2 \leq C \tau \sum_{k \tau \in (0,T)} |y^k_\tau(x_0)|^2.
\end{equation}

We conclude this paragraph by pointing out that similar weak observation properties often appear in the models for wave propagation on networks (\cite{DagZua06}). Nervertheless, condition \eqref{Ass-norm-*} is sometimes violated as in the case of the wave equation observed through a subset which does not satisfy the geometric control condition, see \cite{Robbiano95,Lebeau92}.

\section{Open problems}\label{Sec-Open}

\subsection{Non-conservative time-discretization schemes}

In this article, we focused on time-discretization schemes that preserve the energy of the solutions, which is a natural class since the continuous model \eqref{FD-Abstract} also preserves the energy of the solutions. 

However, in many situations, it may be interesting to consider dissipative numerical schemes, adding some numerical viscous effects to avoid instabilities. A very simple scheme introducing such dissipation properties is the Euler implicit method, which approximates \eqref{FD-Abstract} as follows:
\begin{equation}
	\label{EulerImp}
	\frac{y^{k+1}_\tau - y^k_\tau}{\tau} = \A y^{k+1}_\tau, \quad k \geq 0, \qquad y^0_\tau = y^0.
\end{equation}
If $y^0 = \Phi_j$, one easily checks that the solution $y^k_\tau$ is given by 
$$
	y^k_\tau = \left(\frac{1}{1- i \tau \mu_j}\right)^k \Phi_j
$$
Thus, when considering only eigenvectors such that $\tau |\mu| >\delta_0>0$, solutions $y^k_\tau$ decay exponentially. In particular, this implies that the observability inequality \eqref{Fully-Obs} may be not appropriate and should be replaced as follows: for all $y_\tau$ solutions of \eqref{EulerImp}, 
\begin{equation}
	\label{Final-Time-Obs}
	\norm{y_\tau^{\lfloor T/\tau \rfloor} }_\X^2 \leq C \tau \sum_{ k \tau \in (0,T)} \norm{\Bcal y_\tau^k}_\U^2.	
\end{equation}
We emphasize that, though the observability inequalities \eqref{Fully-Obs} and \eqref{Final-Time-Obs} are completely equivalent when the time-discretization schemes preserve the energy, it is not anymore the case when considering the Euler implicit method \eqref{EulerImp}, and estimate \eqref{Final-Time-Obs} is indeed weaker than the observability property \eqref{Fully-Obs} in that case.

This seems to indicate that a representation formula similar to \eqref{Transmutation} would involve kernels similar to the ones obtained to link the wave equation to the heat equations, see the ones proposed in \cite{Miller04a,ErvZuazuaARMA} for instance. 

Let us rapidly explain what is the new difficulty occurring when considering non-conservative schemes by transposing the formal arguments we developed in the introduction to the Euler implicit equation \eqref{EulerImp}. Similarly as in \eqref{Y-tau-Expanded}, solutions $y_\tau$ of \eqref{EulerImp} with initial datum $y^0$ as in \eqref{Init-Data} can be written as
$$
	y_\tau^k = \sum_{j} a_j \Phi_j \exp\left(i \frac{f( \mu_j \tau)}{\tau} k \tau\right),
$$
but this time $f$ is a complex-valued function given by
$$
	f(\alpha) = \arctan(\alpha) + \frac{i}{2} \log( 1+ \alpha^2). 
$$
Hence, similarly as in \eqref{Z-cont}, one can introduce the formal continuous version of $y_\tau$ given by 
$$
	z_\tau(s) = \sum_{j} a_j \Phi_j \exp\left(i \frac{f( \mu_j \tau)}{\tau} s \right).
$$
\emph{But} this solution is only well-defined for $s \geq 0$, and we are thus led to look for some kernel function $\tilde \rho_\tau = \tilde \rho_\tau(t,s)$ such that for all $\mu \in \R$ and $t \in R$,
\begin{equation}
	\label{Eq-GeneralCase}
	e^{i \mu t} = \int_{\R_+^*} \rho_\tau(t,s)\exp\left(i \frac{f( \mu \tau)}{\tau} s \right) \, ds,
\end{equation}
where the domain of integration has been modified into $\R_+^*$ instead of $\R$ as in \eqref{Formal-tilde-rho-req}. Also note that, to derive observability properties for the time-discrete model \eqref{EulerImp}, we actually need the identity \eqref{Eq-GeneralCase} only for $t $ in a bounded set of time and $|\mu| \leq \delta/\tau$, which might help deriving appropriate kernels. 

But we do not even know if equation \eqref{Eq-GeneralCase} is solvable. It may be the case but probably to the price of involving more singular kernels, as for instance in \cite{ErvZuazuaARMA} where the kernel was only defined for bounded sets of time, and allows to express solutions of the conservative model (wave-type model) in terms of solution of the dissipative models (heat-type model).

In some sense, this problem could also be thought as follows: given an observable conservative system, can we guarantee nice observability properties for viscous versions of it? With that respect, it is worth   pointing out the works \cite{CoronGuerrero,GlassJFA2010,LissyCras2012} considering the controllability properties of the transport equation
\begin{equation}
	\label{TransportVisc}
	\left\{\begin{array}{l}
			\partial_t y + \partial_x y -\varepsilon \partial_{xx} y = 0, \quad (t,x) \in (0,T) \times (0,1),
			\\
			y(t,0) = v(t), \quad y(t,1) = 0, \quad \hbox{ for } t \in (0,T),
		\end{array}\right.
\end{equation}
 with vanishing viscosity parameter $\varepsilon >0$, which illustrate the difficulties one encounters when considering control issues for vanishing viscosity systems (see also \cite{MicuRoventa} for another example). Indeed, to our knowledge, it is still not known what is the best time $T$ guaranteeing that the systems \eqref{TransportVisc} are uniformly (with respect to $\varepsilon >0$) null-controllable, though the critical time is expected to be $1$, i.e. the time needed to control the underlying transport equation obtained by setting $\varepsilon = 0$ and dropping the boundary condition at $x = 1$. 
 
 The complexity of this singular limit problem is one more evidence of the intrinsic complexity of passing from strongly dissipative dynamics to conservative ones. But the reverse problem is simpler. Indeed, in \cite{LopZZ} for instance (see also \cite{Phung02}), it was proved that the null-controllability and the observability of the heat equation 
 $$
 y_t - \Delta y =0,
 $$
 can be obtained as a limit, when $\varepsilon \to 0$,  of the corresponding wave-like properties for the one-parameter family of wave equations
 $$
 \varepsilon y_{tt} + y_t - \Delta y =0.
 $$

This is also in agreement with the results in \cite{Miller06a} allowing to write down solutions of the heat equation in terms of solutions of the wave equation. In our context, this corresponds to writing the solutions of the time-discrete implicit Euler schemes in terms of the time-continuous conservative dynamics. As one can easily check, this can be done with the same formula as in Theorem \ref{DiscTransmutationReverse}  This allows obtaining interesting results about the uniform (with respect of time-step) admissibility properties or the optimality of the possible uniform time-discrete observability results. But, unfortunately, the key issue of getting uniform observability results for the time-discrete dynamics out of the continues ones, requires a transformation expressing the solution of the conservative dynamics in terms of the dissipative one, and thus requires  further analysis.

 \subsection{Variable time-steps}

In many applications, it is important to allow the time discretization parameter to change in an adaptive manner. The precise study of such case seems to be out of reach by using our method which strongly relies on the use of discrete Fourier analysis.

\bibliographystyle{plain}

\end{document}